\documentclass[11pt]{amsart}

\newcommand{\jrev}[1]{{{#1}}}
\newcommand{\rev}[1]{{#1}}

\newcommand{\cog}{C_\og}
\newcommand{\Hd}{\hat{H}_\dv}
\newcommand{\Ht}{\hat{H}_\tp}
\newcommand{\Ed}{\hat{E}_\dv}
\newcommand{\Et}{\hat{E}_\tp}
\newcommand{\eps}{\epsilon}
\newcommand{\tp}{{\scriptscriptstyle{\top}}}
\newcommand{\dv}{{\scriptscriptstyle{\dashv}}}
\newcommand{\ii}{\imath}
\newcommand{\og}{\omega}
\DeclareMathOperator{\osc}{osc}
\newcommand{\veps}{\varepsilon}
\newcommand{\ncrl}[2]{\| {#1}\|_{{#2},\mathrm{curl}}}

\newcommand{\gradt}{\ensuremath{\mathop{\mathrm{grad}_{{\tp}}}}}
\newcommand{\divt}{\ensuremath{\mathop{\mathrm{div}_{\tp}}}}

\newcommand{\nul}{\mathrm{nul}}
\newcommand{\rank}{\mathrm{rank}}
\newcommand{\Bg}{B_{p+3}^{\mathrm{grad}}}
\newcommand{\Bc}{B_{p+3}^{\mathrm{curl}}}
\newcommand{\Bd}{B_{p+3}^{\mathrm{div}}}
\newcommand{\vog}{\pigo v} 
\newcommand{\vpi}{\varPi}
\newcommand{\pigo}{\mathop{\varPi_{0}^\mathrm{g}}}
\newcommand{\pig}{\mathop{\varPi_{p+3}^\mathrm{grad}}}
\newcommand{\pid}{\mathop{\varPi_{p+3}^\mathrm{div}}}
\newcommand{\pic}{\mathop{\varPi_{p+3}^\mathrm{curl}}}
\newcommand{\pil}{\mathop{\varPi_{p+2}}}

\newcommand{\pich}{\mathop{\hat{\varPi}_{p+3}^\mathrm{curl}}}

\newcommand{\diam}{\mathop{\mathrm{diam}}}
\newcommand{\Pb}{P^{\bot}}
\newcommand{\Pbo}{P^{0,\bot}}
\newcommand{\Pbd}{P^{\bot}_{p+2}(\d K)}
\newcommand{\Pbod}{P^{0,\bot}_{p+2}(\d K)}
\newcommand{\Babuska}{Babu{\v{s}}ka}       
                 
\newcommand{\Poincare}{Poincar{\'{e}}}     
\newcommand{\Nedelec}{N{\'{e}}d{\'{e}}lec} 
\newcommand{\Pc}{P^\mathrm{c}}
\newcommand{\Yhopt}{Y_h^\mathrm{opt}}
\newcommand{\ip}[1]{\langle {#1} \rangle}
\newcommand{\trg}{\mathop{\mathrm{tr}_\mathrm{grad}}}
\newcommand{\trc}{\mathop{\mathrm{tr}_{\mathrm{curl},\tp}}}
\newcommand{\trp}{\mathop{\mathrm{tr}_{\mathrm{curl},{\dv}}}}
\newcommand{\trd}{\mathop{\mathrm{tr}_\mathrm{div}}}
\newcommand{\trgi}{\mathop{\mathrm{tr}_\mathrm{grad}^{-1}}}
\newcommand{\trci}{\mathop{\mathrm{tr}_{\mathrm{curl},{\dv}}^{-1}}}
\newcommand{\trpi}{\mathop{\mathrm{tr}_{\mathrm{curl},{\tp}}^{-1}}}
\newcommand{\trdi}{\mathop{\mathrm{tr}_\mathrm{div}^{-1}}}
\newcommand{\trgK}{\mathop{\mathrm{tr}_\mathrm{grad}^K}}
\newcommand{\trcK}{\mathop{\mathrm{tr}_{\mathrm{curl},{\dv}}^K}}
\newcommand{\trpK}{\mathop{\mathrm{tr}_{\mathrm{curl},{\tp}}^K}}
\newcommand{\trdK}{\mathop{\mathrm{tr}_\mathrm{div}^K}}
\newcommand{\om}{{\varOmega}}
\newcommand{\oh}{{\varOmega_h}}
\newcommand{\ds}{\displaystyle}
\newcommand{\ptl}{{\partial}}

\newcommand{\dom}{\mathop{\mathrm{dom}}}
\newcommand{\grad}{\mathop{\mathrm{grad}}}
\newcommand{\dive}{\mathop{\mathrm{div}}}
\newcommand{\curl}{\mathop{\mathrm{curl}}}
\newcommand{\Ho}{\ring{H}}
\newcommand{\Hdiv}[1]{H(\mathrm{div},#1)}
\newcommand{\Hdivo}[1]{\ring{H}(\mathrm{div},#1)}
\newcommand{\Hhdiv}[1]{H^{-1/2}(\mathrm{div},#1)}
\newcommand{\Hhdivo}[1]{\Ho^{-1/2}(\mathrm{div},#1)}

\newcommand{\Hcurl}[1]{H(\mathrm{curl},#1)}
\newcommand{\Hcurlo}[1]{\Ho(\mathrm{curl},#1)}
\newcommand{\Hhcurl}[1]{H^{-1/2}(\mathrm{curl},#1)}
\newcommand{\Hhcurlo}[1]{\Ho^{-1/2}(\mathrm{curl},#1)}
\renewcommand{\Omega}{\varOmega}
\newcommand{\D}{\mathcal{D}}
\newcommand{\CCC}{\mathbb{C}}
\newcommand{\RRR}{\mathbb{R}}
\newcommand{\qeg}{/\!/\!/}
\def\d{\partial}

\usepackage{mathabx}
\usepackage{accents}
\usepackage{amssymb}
\usepackage{amscd}
\usepackage{amsmath}
\usepackage{amsfonts}
\usepackage{mathrsfs}
\usepackage{graphicx}
\usepackage{subcaption}
\usepackage{color}
\usepackage{tikz}
\usetikzlibrary{chains}
\usetikzlibrary{arrows}
\usepackage{pgfplots}
\pgfplotsset{width=6.5cm}

\setlength{\topmargin}{-0.5cm}
\setlength{\textwidth}{15.5cm} 
\setlength{\textheight}{22cm}
\setlength{\oddsidemargin}{0.6cm}  
\setlength{\evensidemargin}{0.6cm}

\graphicspath{{./figures/}}

\newtheorem{theorem}{Theorem}
\newtheorem{lemma}[theorem]{Lemma}
\newtheorem{corollary}[theorem]{Corollary}

\numberwithin{theorem}{section} 
\numberwithin{equation}{section}
\theoremstyle{remark}
\newtheorem{remark}[theorem]{Remark}
\newtheorem{assumption}{Assumption}
\newtheorem{example}[theorem]{Example}

\begin{document}

\title[]{Breaking spaces and forms for the DPG method and applications including Maxwell equations}

\begin{abstract}
  Discontinuous Petrov Galerkin (DPG) methods are made easily
  implementable using {``broken'' test spaces, i.e., spaces of
    functions with no continuity constraints} across mesh element
  interfaces. Broken spaces derivable from a standard exact sequence
  of first order (unbroken) Sobolev spaces are of particular interest.
  A characterization of interface spaces that connect the broken
  spaces to their unbroken counterparts is provided.  Stability of
  certain formulations using the broken spaces can be derived from the
  stability of analogues that use unbroken spaces.  This technique is
  used to provide a complete error analysis of DPG methods for Maxwell
  equations with perfect electric boundary conditions. The technique
  also permits considerable simplifications of previous analyses of
  DPG methods for other equations. Reliability and efficiency
  estimates for an error indicator also follow. Finally, the
  equivalence of stability for various formulations of the same
  Maxwell problem is proved, including the strong form, the ultraweak
  form, and various forms in between.
\end{abstract}

\subjclass[2010]{65N30}

\keywords{}

\thanks{This work was partially supported 
  by the AFOSR under grant FA9550-12-1-0484, 
  by the NSF under grant
  DMS-1318916, and by the DFG
  via SPP~1748} 

\author[C. Carstensen]{C.~Carstensen}
\address{Department of Mathematics,
Humboldt-Universit\"at zu Berlin, Unter den Linden 6,
10099 Berlin, Germany}
\email{cc@math.hu-berlin.de}

\author[L. Demkowicz]{L.~Demkowicz}
\address{Institute for Computational Engineering and Sciences,
  The University of Texas at Austin, Austin, TX 78712, USA}
\email{leszek@ices.utexas.edu}

\author[J. Gopalakrishnan]{J.~Gopalakrishnan}
\address{PO Box 751, Portland State University, Portland, OR 97207-0751, USA}
\email{gjay@pdx.edu}

\maketitle

\section{Introduction}

When a domain $\om$ is partitioned into elements, a function in a
Sobolev space like $\Hcurl\om$ or $\Hdiv\om$ has continuity
constraints across element interfaces, e.g, the former has tangential
continuity, while the latter has continuity of its normal
component. If these continuity constraints are removed from the space,
then we {obtain}  ``broken'' Sobolev spaces.  Discontinuous Petrov Galerkin
(DPG) methods introduced in~\cite{DemkoGopal10a,DemkoGopal11} used
spaces of such discontinuous functions in broken Sobolev spaces to
localize certain computations. The studies in this paper begin by
clarifying this process of breaking Sobolev spaces. This process,
sometimes called hybridization, has been well studied within a
discrete setting. For instance, the hybridized Raviart-Thomas
method~\cite{BrezzForti91,RaviaThoma77} is obtained by
discretizing a variational formulation and then removing the
continuity constraints of the discrete space, i.e., by discretizing
first and then hybridizing. In contrast, in this paper, we identify
methods obtained by hybridizing first and then discretizing, a setting
more natural for DPG methods. We then take this idea {further} by
connecting the stability of formulations with broken spaces and
unbroken spaces, leading to the first convergence proof of a DPG
method for Maxwell equations.

The next section (Section~\ref{sec:break}) is devoted to {a study
  of } the interface spaces that arise when breaking Sobolev
spaces. These infinite-dimensional interface spaces can be used to
connect the broken and the unbroken spaces. The main result of
{Section~\ref{sec:break}, contained in Theorem~\ref{thm:duality}, makes}
this connection precise and provides an elementary
characterization (by duality) of the natural norms on these interface
spaces. {This theorem can be viewed as a generalization 
of a similar result in~\cite{RaviaThoma77a}.}

Having discussed breaking spaces, we proceed to break variational
formulations in Section~\ref{sec:break-forms}. The motivation for the
theory in that section is that some variational formulations set in
broken spaces have another closely related variational formulation set
in their unbroken counterpart. This is the case with all the
formulations on which the DPG method is based. The main observation
{of Section~\ref{sec:break-forms} is} a simple result
(Theorem~\ref{thm:hybrid}) {which in its abstract form seems to be
  already known in other studies~\cite{GargPrudhZee14}. 
In the DPG context,} it provides
sufficient conditions under which {\em stability of broken forms follow
from stability of their unbroken relatives}. As a consequence of this
observation, we are able to simplify many previous
analyses of DPG methods. The content of Sections~\ref{sec:break}
and~\ref{sec:break-forms} can be understood without reference to the
DPG method.

A quick introduction to the DPG method is given in
Section~\ref{sec:dpg}, where known conditions needed for {\it a
  priori} and {\it a posteriori} error analysis are also presented.
One of the conditions is the existence of a Fortin
operator. Anticipating the needs of the Maxwell application, we then
present, in Section~\ref{sec:fortin}, a sequence of Fortin operators
for $H^1(K), \Hcurl K $ and $\Hdiv K $, all on a single tetrahedral
mesh element~$K$. They are constructed to satisfy certain moment
conditions required for analysis of DPG methods. They fit into a
commuting diagram that \jrev{helps us} prove the required norm estimates (see
Theorem~\ref{thm:Fortin}).

The \rev{time-harmonic Maxwell equations} within a cavity are considered
afterward in Section~\ref{sec:maxwell}. Focusing first on a simple DPG
method for Maxwell equation, called the primal DPG method, we provide
a complete analysis using the tools developed in the previous section.
To understand one of the novelties here, recall that the wellposedness
of the Maxwell equations is guaranteed as soon as the excitation
frequency {of the harmonic wave} is different from a cavity
resonance.  However, this wellposedness is not directly inherited by
most standard discretizations, which are often known to be stable
solely in an asymptotic regime~\cite{Monk03b}. The discrete spaces
used must be sufficiently fine before one can even guarantee
solvability of the discrete system, not to mention error guarantees.
Furthermore, the analysis of the standard finite element method does
not clarify how fine the mesh needs to be to ensure that the stable
regime is reached.  In contrast, the DPG schemes, having inherited
their stability from the exact equations, are stable no matter how
coarse the mesh is.  This advantage is striking when attempting robust
adaptive meshing strategies.

Another focus of Section~\ref{sec:maxwell} is the understanding of a
proliferation of formulations for the Maxwell boundary value problem.
One may decide to treat individual equations of the Maxwell system
differently, e.g., one equation may be imposed strongly, while another
may be imposed weakly via integration by parts. Mixed methods make a
particular choice, while primal methods make a different choice. We
will show (see Theorem~\ref{thm:MaxwellCycles}) that the stability of one formulation
implies the stability of five others. The proof 
is an interesting application of
the closed range theorem.  However, when the DPG methodology is
applied to discretize these formulations, the numerical results
reported in Section~\ref{sec:numer}, show that the various methods do
exhibit differences. This is because the functional settings are
different for different formulations, i.e., convergence to the
solution occurs in different norms.  Section~\ref{sec:numer} also
provides results from numerical investigations on issues where the
theory is currently silent.

\section{Breaking Sobolev spaces}  \label{sec:break}

In this section, we discuss precisely what we mean by breaking Sobolev
spaces using a mesh. We will define {\em broken spaces} and {\em
  interface spaces} and prove a duality result that clarifies the
interplay between these spaces.  We work with infinite-dimensional
\jrev{(but mesh-dependent)} spaces on an open bounded domain
$\om \subset \RRR^3$ with Lipschitz boundary.  The mesh, denoted by
$\om_h$, is a disjoint partitioning of $\om$ into open elements $K$
such that the union of their closures is the closure of $\om.$ The
collection of element boundaries $\d K$ for all $K \in \oh$, is
denoted by $\d \om_h$. We assume that each element boundary $\d K$ is
Lipschitz. The shape of the elements is otherwise arbitrary for now.

We focus on the most commonly occurring first order Sobolev spaces of
real or \jrev{complex-valued} functions, namely $H^1(\om)$, $\Hdiv \om$, and
$\Hcurl \om$.  Their {\em broken} versions are defined, respectively,
by
\begin{align*}
H^1(\om_h) 
& = \{ u \in L^2(\om) \, : \, u\vert_K \in H^1(K),\, K \in \oh \}
&& = \prod_{K \in \oh} H^1(K), 
\\
\Hcurl\oh & = \{ E \in \jrev{(L^2(\om))^3} \, : \, E\vert_K \in H(\text{curl},K),\, K \in \oh \}
&& \ds = \prod_{K \in \oh} \Hcurl K,
\\
\Hdiv\oh
& = \{ \sigma \in \jrev{(L^2(\om))^3} \, : \, \sigma\vert_K \in H(\text{div},K),\, K \in \oh \}
&& \ds = \prod_{K \in \oh} H(\text{div},K).
\end{align*}
As these broken spaces contain functions with no continuity
requirements at element interfaces, their discretization is easier
than \jrev{that of } globally conforming spaces.

To recover the original Sobolev spaces from these broken spaces, we
need traces and interface variables. First, let us consider these
traces on each element $K$ in $\oh$.
\begin{align*}
  \trgK
  u 
  & = u\vert_{\ptl K} & u \in H^1(K),
  \\
  \trpK E
    & = (n_K \times E) \times n_K|_{\d K}   & E \in \Hcurl K,
  \\
  \trcK E
    & = n_K \times E\vert_{\ptl K}   & E \in \Hcurl K,
  \\
  \trdK \sigma
    & = \sigma\vert_{\ptl K} \cdot n_K & \sigma \in \Hdiv K.
\end{align*}
Here and throughout $n_K$ denotes the unit outward normal on $\d K$
and is often simply written as $n$.  Both $n_K$ and these traces are
well defined almost everywhere on $\d K$, thanks to our assumption
that $\d K$ is Lipschitz.  The operators $\trg,\trc,$ {$\trp$,} and $\trd$
perform the above trace operation element by element on each of the
broken spaces we defined previously, thus giving rise to linear maps 
\begin{gather*}
  \trg  : H^1(\oh) \to \prod_{K \in \oh} H^{1/2}(\d K), 
  \qquad
    \trc  : \Hcurl \oh \to  \prod_{K\in\oh} \Hhcurl{\d K},
  \\
  \trp  : \Hcurl \oh \to  \prod_{K\in\oh} \Hhdiv{\d K},
  \qquad
  \trd  : \Hdiv \oh \to \prod_{K \in \oh} H^{-1/2}(\d K).
\end{gather*}
It is well known that these maps are continuous and surjective 
\jrev{(for the 
standard definitions of the above codomain Sobolev spaces, see 
e.g.,~\cite{Monk03b}).}
 An
element of $\prod_{K \in \oh} H^{-1/2}(\d K)$ is expressed using
notations like $n \cdot \hat\sigma$ or $\hat{\sigma}_n$ (even when
$\hat \sigma$ itself has not been assigned any separate meaning) that
are evocative of their dependence on the interface normals.  Similarly,
the elements of the other trace map codomains are expressed 
using notations like
\[
n \times \hat E \equiv \hat{E}_{\dv} \in 
\prod_{K\in\oh} \Hhdiv{\d K},
\qquad
(n \times \hat E) \times n \equiv \hat{E}_{\tp}
\in \prod_{K\in\oh} \Hhcurl{\d K}.
\]

Next, we need spaces of {\em interface} functions.  We use the above
trace operators to define them, after cautiously noting two issues
that can arise on an interface piece $f = \d K^+ \cap \d K^-$ shared
by two mesh elements $K^\pm$ in $\oh$.  First, functions in the range
of $\trg$ when restricted to $F$ is generally multivalued, and we
would like our interface functions to be single valued in some
sense. Second, the range of the remaining trace operators \jrev{consists} of
functionals whose restrictions to $f$ are in general undefined. The
following definitions circumvent these issues.
  \begin{align*}
H^{1/2}({\d\oh}) 
& = \trg H^1(\om),
&
\Hhdiv\oh
& = \trc \Hcurl\om,
\\
\Hhcurl{\d\oh}
& = \trp \Hcurl\om,
&
H^{-1/2}({\d\oh}) 
& = \trd \Hdiv\om.
  \end{align*}
\jrev{
If  $\oh$ consists of a single element,  then 
 $H^{1/2}(\d\oh)$ equals  $H^{1/2}(\d\om)$, but in 
general 
$\trg H^1(\om) \subsetneq \trg H^1(\oh)$
(and similar remarks apply for the 
 other spaces). 
 We} norm each of the above
interface spaces by these quotient norms:
\begin{subequations} \label{eq:norms}
\begin{align}
  \label{eq:normsg}
  \| \hat u \|_{H^{1/2}({\d\oh})} 
  & = \inf_{u \in  H^1(\om) \cap\trgi \{ \hat u\}  } \| u \|_{H^1(\om)},
  \\
  \| \Et \|_{\Hhdiv {\d\oh}}
  & = \inf_{E \in \Hcurl\om \cap \trci \{ \Ed\}} \| E \|_{\Hcurl\om},
  \\
  \| \Ed \|_{\Hhcurl {\d\oh}}
  & = \inf_{E \in \Hcurl\om \cap \trpi \{ \Et\}} \| E \|_{\Hcurl\om},
  \\    \label{eq:normsd}
  \|\hat \sigma_n\|_{H^{-1/2}(\d\oh)}
  & = \inf_{\sigma \in \Hdiv\om \cap \trdi\{ \hat\sigma_n\} } \| \sigma \|_{\Hdiv\om}.
\end{align}  
\end{subequations} 
These are indeed quotient norms because the infimums are over cosets
generated by kernels of the trace maps. E.g., if $w$ is any function
in $H^1(\om)$ such that $\trg w = \hat u$, then the set where the
minimization is carried out in~\eqref{eq:normsg}, namely
$H^1(\om) \cap \trgi \{\hat u\},$ equals the coset
$w + \prod_{K\in\oh} \Ho^1(K)$, \jrev{where $\Ho^1(K)=\ker(\trgK)$.} 
 Note that every element of this coset
is an extension of $\hat u$.  For this reason, such norms are also
known as the ``minimum energy extension'' norms.  For an alternate way
to characterize the interface spaces, 
see~\cite{RobertsBui-ThanhDemkowicz14}.

\begin{remark}
  The quotient norm in~\eqref{eq:normsd} appeared in the literature as
  early as~\cite{RaviaThoma77a}. The word ``hybrid'' that appears in
  their title was used to refer to situations where, to
  quote~\cite{RaviaThoma77a}, ``the constraint of interelement
  continuity has been removed at the expense of introducing a Lagrange
  multiplier.''  The quote also summarizes the discussion of this
  section well.  The above definitions of our four interface spaces are
  thus generalizations of a definition in~\cite{RaviaThoma77a} and
  each can be interpreted as an appropriate space of Lagrange
  multipliers.
\end{remark}

We now show by elementary arguments  that the quotient norms on the 
two pairs of trace spaces  
\[
\{ H^{1/2}(\ptl K), H^{-1/2}(\ptl K)\} \quad\text{ and }\quad
\{H^{-1/2}(\text{curl},\ptl K),H^{-1/2}(\text{div},\ptl K)\},
\]
are dual to each other. The duality pairing in any Hilbert space $X$,
namely the action of a linear or conjugate linear (antilinear)
functional $x' \in X'$ on $x \in X$ is denoted by
$\ip{ x', x}_{X'\times X}$ and we omit the subscript in this notation
when no confusion can arise.  We also adopt the convention that when
taking supremum over vector spaces (such as in the next result) the
zero element is omitted tacitly.

\begin{lemma} \label{lem:duality} %
  The following identities hold for any $ n \cdot \hat \sigma $ in
  $ H^{-1/2}(\d K)$ and any $\hat u$ in $ H^{1/2}(\d K).$
\begin{subequations}\label{eq:sup}
  \begin{align}
    \label{eq:sup1a}
    \| \hat{\sigma}_n \|_{H^{-1/2}(\ptl K)} 
    & 
     = \!\sup_{u \in H^1(K)} \!
     \frac{ \vert \langle \hat{\sigma}_n , u \rangle \vert }
      {\hspace{0.5cm}\Vert  u \Vert_{H^1(K)}\hspace{-0.5cm}}  \hspace{0.5cm}
     \quad = 
     \sup_{\hat u \in H^{1/2}(\d K)} \frac{ \vert \langle \hat{\sigma}_n , 
      \hat u \rangle \vert 
      }{\hspace{0.5cm}\Vert \hat u \Vert_{H^{1/2}(\ptl K)}\hspace{-0.5cm}}\quad,
   \\
    \label{eq:sup1b}
    \Vert \hat u \Vert_{H^{1/2}(\ptl K)} 
    & = \!\sup_{\sigma \in \Hdiv K} \!\!\!
    \frac{ \vert \langle n \cdot \sigma,\hat u \rangle \vert }
      {\hspace{0.5cm}\Vert \sigma \Vert_{\Hdiv{ K}}\hspace{-0.5cm}}     
    \quad = \sup_{ \hat{\sigma}_n  \in H^{-1/2}(\ptl K)} \!\!\!
    \frac{ \vert \langle \hat{\sigma}_n,\hat u \rangle \vert
      }
      {\hspace{0.5cm}\Vert \hat{\sigma}_n \Vert_{H^{-1/2}(\ptl K)}\hspace{-0.5cm}}     
      \hspace{0.5cm}.
\end{align}
The next two identities hold for any
$ \hat{E}_{\dv}$ in $ \Hhdiv {\d K} $ and any
$\hat{F}_{\tp}$ in $ \Hhcurl {\d K}$.
\begin{align}
    \label{eq:sup2a}
    \Vert \hat{E}_{\dv} \Vert_{\Hhdiv{\ptl K}}
    & = \sup_{F \in \Hcurl K} 
      \frac{\vert \langle \hat{E}_{\dv}, F \rangle \vert}
      {\hspace{0.5cm} \Vert F \Vert_{\Hcurl{ K}}\hspace{-0.5cm}}
      \\ \nonumber 
    &  = 
      \!\!\!\!\sup_{\hat{F}_{\tp}  \in \Hhcurl{\d K}} 
      \!\!\!\!
      \frac{\vert \langle \hat{E}_{\dv}, \hat{F}_{\tp} 
      \rangle \vert  
      }
      {\hspace{1cm} \Vert  \hat{F}_{\tp} \Vert_{\Hhcurl{\ptl K}}\hspace{-1cm}}\;, 
    \\ \label{eq:sup2b}
    \Vert \hat{F}_{\tp} \Vert_{\Hhcurl{\d K}}
    & 
      = \sup_{ E \in \Hcurl K} \!\!
      \frac{\vert \langle n \times E, \hat{F}_{\tp}  \rangle \vert }
      {\hspace{0.5cm} \| E \|_{\Hcurl{ K}} \hspace{-0.5cm}}
    \\ \nonumber 
    &  =  \!\!\sup_{ \hat{E}_{\dv} \in \Hhdiv{\d K}}\!\!\!
    \frac{\vert \langle \hat{E}_{\dv},  \hat{F}_{\tp} \rangle \vert  
      }
      {\hspace{1cm} \| \hat{E}_{\dv} \|_{\Hhdiv{\ptl K}} \hspace{-1cm}}\hspace{1cm}.
  \end{align}
\end{subequations}
\end{lemma}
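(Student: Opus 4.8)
The plan is to establish all four identities by one elementary, constructive scheme that exploits the Green identity coupling each pair of spaces; I treat the pair $\{H^{1/2}(\d K),H^{-1/2}(\d K)\}$ in detail, the curl pair being strictly parallel. Throughout I read the trace norms as the minimum-energy-extension (quotient) norms obtained by specializing \eqref{eq:norms} to a single element, e.g. $\|\hat u\|_{H^{1/2}(\d K)}=\inf\{\|u\|_{H^1(K)}:\trgK u=\hat u\}$ and $\|\hat\sigma_n\|_{H^{-1/2}(\d K)}=\inf\{\|\sigma\|_{\Hdiv K}:\trdK\sigma=\hat\sigma_n\}$. The pairing is the one furnished by Green's identity, $\langle\hat\sigma_n,\hat u\rangle=\int_K(\bar u\,\dive\sigma+\nabla\bar u\cdot\sigma)\,dx$ for arbitrary representatives $\sigma,u$; the weak definitions of $\dive$ and of the kernels $\Ho^1(K)=\ker\trgK$ and $\Hdivo K=\ker\trdK$ show this value depends only on $\trgK u$ and $\trdK\sigma$, so the pairing descends to the quotients.

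The upper bounds are routine. Cauchy--Schwarz gives $|\langle\hat\sigma_n,\hat u\rangle|\le\|u\|_{H^1(K)}\|\sigma\|_{\Hdiv K}$ for every pair of representatives; minimizing over both yields $|\langle\hat\sigma_n,\hat u\rangle|\le\|\hat u\|_{H^{1/2}(\d K)}\|\hat\sigma_n\|_{H^{-1/2}(\d K)}$, so each displayed supremum is bounded above by the matching quotient norm. The reduction from a supremum over $H^1(K)$ to one over $H^{1/2}(\d K)$ uses only surjectivity of $\trgK$ (stated above) together with the definition of the quotient norm: grouping the competitors $u$ by their common trace $\hat u$ replaces the denominator $\|u\|_{H^1(K)}$ by its infimum $\|\hat u\|_{H^{1/2}(\d K)}$. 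The same two observations dispose of the analogous reductions in \eqref{eq:sup1b}, \eqref{eq:sup2a}, and \eqref{eq:sup2b}.

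The substance is the reverse inequality, which I would obtain by exhibiting an explicit maximizer. Given $\hat\sigma_n$, let $\sigma^\ast\in\Hdiv K$ be its minimum-norm extension, i.e. the orthogonal projection characterized by $\sigma^\ast\perp\Hdivo K$ in the $\Hdiv K$ inner product, with $\|\sigma^\ast\|_{\Hdiv K}=\|\hat\sigma_n\|_{H^{-1/2}(\d K)}$. Testing this orthogonality against $C_c^\infty(K)^3\subset\Hdivo K$ shows $\sigma^\ast=\nabla(\dive\sigma^\ast)$ distributionally; hence $u^\ast:=\dive\sigma^\ast$ lies in $H^1(K)$, satisfies $\nabla u^\ast=\sigma^\ast$ and $\Delta u^\ast=u^\ast$. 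A one-line computation then gives $\langle\hat\sigma_n,\trgK u^\ast\rangle=\int_K(|u^\ast|^2+|\nabla u^\ast|^2)=\|u^\ast\|_{H^1(K)}^2=\|\sigma^\ast\|_{\Hdiv K}^2$, while $\|u^\ast\|_{H^1(K)}=\|\sigma^\ast\|_{\Hdiv K}=\|\hat\sigma_n\|_{H^{-1/2}(\d K)}$. Thus the test trace $\trgK u^\ast$ realizes the ratio $\|\hat\sigma_n\|_{H^{-1/2}(\d K)}$, which closes \eqref{eq:sup1a}; identity \eqref{eq:sup1b} follows from the mirror-image construction starting from the minimum-norm $H^1$-extension of $\hat u$ (again solving $\Delta u^\ast=u^\ast$) and pairing it with $\nabla u^\ast\in\Hdiv K$. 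Note that the optimizer is the same field in both directions.

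For \eqref{eq:sup2a}--\eqref{eq:sup2b} the identical three steps apply, now with the curl Green identity $\int_K(\curl E\cdot\bar F-E\cdot\overline{\curl F})\,dx=\langle n\times E,\hat F_\tp\rangle$ and the common kernel $\ker\trcK=\ker\trpK=\Hcurlo K$. The minimum-norm extension $W^\ast$ of the prescribed datum satisfies $W^\ast\perp\Hcurlo K$, hence $\curl\curl W^\ast+W^\ast=0$ and $\curl W^\ast\in\Hcurl K$; testing against $\curl W^\ast$ makes the boundary pairing equal $\|W^\ast\|_{\Hcurl K}^2=\|\curl W^\ast\|_{\Hcurl K}^2$ in modulus, giving the reverse inequality exactly as before. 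I expect the only genuine obstacle to be this lower-bound step: one must check that the orthogonal projection defining the minimum-norm extension is regular enough for its gradient, respectively its curl, to lie in the paired space, and that the ``$+\,\mathrm{id}$'' structure of the underlying operator forces the two norms and the pairing to coincide exactly. Once this is in hand, everything else reduces to Cauchy--Schwarz and the definition of the quotient norms.
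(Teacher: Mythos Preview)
Your proof is correct and follows essentially the same route as the paper: the minimum-norm extension $\sigma^\ast$ you construct is precisely the solution of the paper's Dirichlet problem $-\grad(\dive\sigma)+\sigma=0$ with $n\cdot\sigma=\hat\sigma_n$, and your $u^\ast=\dive\sigma^\ast$ is the solution of the companion Neumann problem $-\Delta w+w=0$ with $\partial w/\partial n=\hat\sigma_n$; the paper explicitly frames the argument as an equivalence between these two boundary value problems, while you derive the same PDE directly from the orthogonality $\sigma^\ast\perp\Hdivo K$, but the mechanism (and the curl analogue) is identical.
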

\begin{proof}
  The first identity is proved using an equivalence between a
  Dirichlet and a Neumann problem. The Dirichlet problem is the
  problem of finding $\sigma \in \Hdiv K$, given
  $\hat{\sigma}_n \in H^{-1/2}(\d K)$, such that
  \begin{equation}
    \label{eq:3}
\left\{
\begin{array}{ll}
n \cdot \sigma  = n \cdot \hat\sigma, & \quad\text{on } \ptl K, \\[8pt]
-\grad (\text{div } \sigma ) + \sigma = 0, & \quad\text{in } K.
\end{array}
\right.    
\end{equation}
The Neumann problem
finds $w \in H^1(K)$  satisfying 
\begin{equation}
  \label{eq:2}
\left\{
\begin{array}{lll}
\ds \frac{\ptl w}{\ptl n} = \hat{\sigma}_n,   &\quad \text{on } \ptl K, \\[8pt]
- \text{div} (\grad w) + w = 0, &\quad\text{in } K.
\end{array}
\right.  
\end{equation}
It is immediate that problems~\eqref{eq:2} and~\eqref{eq:3} are
equivalent in the sense that {$w$} solves~\eqref{eq:2} if and only if
$\sigma = \grad w$ solves~\eqref{eq:3} and moreover
$\| w \|_{H^1(K)} = \| \sigma \|_{\Hdiv K}$. It is also obvious from
the calculus of variations that among all $\Hdiv K$-extensions of
$\hat{\sigma}_n$, the solution of~\eqref{eq:3} has the minimal
$\Hdiv K$ norm (i.e., $\sigma$ is the ``minimum energy extension''
referred to earlier), so
\begin{align*}
  \| \hat{\sigma}_n \|_{H^{-1/2}(\d K)}
  & = \| \sigma \|_{\Hdiv K} = \| w \|_{H^1(K)}
    = \sup_{v \in H^1(K) } 
    \frac
    {| (\grad w, \grad v)_K + (w,v)_K |}
    { \| v \|_{H^1(K)} }
  \\
  & = \sup_{v \in H^1(K) } 
    \frac
    {| \ip{ \hat{\sigma}_n, v} | }
    { \| v \|_{H^1(K)}},
\end{align*}
where we used the variational form of~\eqref{eq:2} in the last
step. (Here and throughout, we use $(\cdot,\cdot)_K$ to denote the
inner product in $L^2(K)$ or its Cartesian products.) This proves the first
equality of~\eqref{eq:sup1a}.

Next, analogous to~\eqref{eq:3} and~\eqref{eq:2}, we set up another
pair of Dirichlet and Neumann problems. The first problem is to find
$u$ in $H^1(K)$, given any $\hat u \in H^{1/2}(\d K)$, such that 
\begin{equation}
  \label{eq:2a}
\left\{
\begin{array}{ll}
u = \hat u,        &\quad \text{on } \ptl K, \\
-\text{div} (\grad u) + u  = 0, & \quad\text{in } K.
\end{array}
\right.  
\end{equation}
The second is to find $\tau$  in $\Hdiv K$ such that 
\begin{equation}
  \label{eq:2b}
\left\{
\begin{array}{ll}
\text{div }\tau = \hat u,  & \quad\text{on } \ptl K, \\
-\grad (\text{div } \tau ) + \tau = 0, &\quad \text{in } K.
\end{array}
\right.
\end{equation}
The solution $u$ of~\eqref{eq:2a} has the minimal $H^1(K)$ norm among
all extensions of $\hat u$ into $H^1(K)$, i.e.,
$\| \hat u \|_{H^{1/2}(\d K) } = \| u \|_{H^1(K)}$. Thus
${\ip{ \hat{\sigma}_n, \hat u} } / \| \hat u \|_{H^{1/2}(\d K)} =
\ip{ \hat{\sigma}_n, u}/ \| u \|_{H^1(K)},$
so taking the supremum over all $\hat u$ in $H^{1/2}(\d K)$, we obtain 
\begin{align*}
  \sup_{\hat u \in H^{1/2}(\d K)} 
  \frac
  {| \ip{ \hat{\sigma}_n, \hat u} | }
  { \hspace{0.5cm}\| \hat u \|_{H^{1/2}(\d K)}\hspace{-0.5cm}}\hspace{0.5cm}
  \le 
  \sup_{u \in H^1(K)} 
  \frac
  {| \ip{ \hat{\sigma}_n,  u} | }
  { \hspace{0.5cm}\| u \|_{H^1(K)}\hspace{-0.5cm}}\hspace{0.2cm}.
\end{align*}
Since the reverse inequality is obvious from the definition of the
quotient norm in the denominator, we have established the second
identity of~\eqref{eq:sup1a}.  To \jrev{prove~\eqref{eq:sup1b}}, we begin, as
above, by observing that $\tau$ is the solution to the Neumann
problem~\eqref{eq:2b} if and only if $u = \dive \tau$ solves the
Dirichlet problem~\eqref{eq:2a}. Moreover,
$\| \tau \|_{\Hdiv K} = \| u \|_{H^1(K)}$. Hence 
\begin{gather*}
  \| \hat u \|_{H^{1/2}(\d K)}
   = \| u \|_{H^1(K)} = \| \tau \|_{\Hdiv K}
    = \sup_{\rho \in  \Hdiv K}
    \frac
    {| (\dive \tau,\dive \rho)_K + (\tau,\rho)_K | }
    { \| \rho \|_{\Hdiv K}}
  \\
   = \sup_{\rho \in \Hdiv K}
    \frac
    {| \ip{ n \cdot \rho, \hat u} | }
    { \| \rho \|_{\Hdiv K}},
\end{gather*}
where we have used the variational form of~\eqref{eq:2b} in the last
step. The proof of~\eqref{eq:sup1b} can now be completed as before.

We follow exactly the same reasoning for the $H(\text{curl})$ case,
summarized as follows: On one hand, the norm of an
  interface function equals the norm of a minimum energy extension,
  while on the other hand, it equals the norm of the inverse of a
  Riesz map applied to a functional generated by the interface
  function. The minimum energy extension that yields the interface
norm $\| \hat{E}_{\dv}\|_{\Hhdiv {\d K}}$ is now the solution of the
Dirichlet problem of finding $E \in \Hcurl K$ satisfying
\begin{equation}
  \label{eq:HcurlD}
\left\{
\begin{array}{ll}
n \times E = \hat{E}_{\dv}, & \quad\text{on } \ptl K, \\
\curl (\curl E) + E = 0, &\quad \text{in } K,
\end{array}
\right.
\end{equation}
while the inverse of the Riesz map applied to the functional
generated by $\hat{E}_{\dv}$ is obtained by solving the Neumann
problem
\begin{equation}
  \label{eq:HcurlN}
  \left\{
    \begin{array}{ll}
      n \times (\curl F) = \hat{E}_{\dv}, &\quad \text{on } \ptl K \\
      \curl ( \curl F) + F = 0, &\quad \text{in } K.
    \end{array}
  \right.
\end{equation}
Again, the two problems are equivalent in the sense that $F$
solves~\eqref{eq:HcurlN} if and only if $E = \curl F$
solves~\eqref{eq:HcurlD}. Moreover,
$\| E\|_{\Hcurl K} = \| F \|_{\Hcurl K}$. Hence
\begin{align*}
  \| \hat{E}_{\dv} \|_{\Hhdiv {\d K}}
  & = \| E \|_{\Hcurl K}
    = \| F \|_{\Hcurl K}
    \\
  & = \sup_{G \in \Hcurl K} 
    \frac
    {| (\curl F, \curl G)_K + (F, G)_K |}
    { \| G \|_{\Hcurl K} }
    \\
  & = \sup_{G \in \Hcurl K}
    \frac{| \ip{ \hat{E}_{\dv}, G}|} 
    { \hspace{0.5cm} \| G \|_{\Hcurl K}\hspace{-0.5cm} }\hspace{0.5cm}.
\end{align*}
The proof of~\eqref{eq:sup2a} follows from this. The  proof 
of~\eqref{eq:sup2b} is similar and is left to the reader.
\end{proof}

Let us return to the product spaces like $H^1(\oh), \Hcurl \oh,$ and
$\Hdiv \oh$. Any Hilbert space $V$ that is the Cartesian product of
various Hilbert spaces $V(K)$ is normed in the standard fashion, 
\[
V = \prod_{K \in \oh} V(K), \qquad \| v \|_V^2 = \sum_{K \in \oh} \| v_K\|_{V(K)}^2,
\]
where $v_K$ denotes the $K$-component of any $v$ in $V$. The dual space $V'$ is the Cartesian product of component duals $V(K)'$. Writing an $\ell
\in V'$  as 
$
\ell (v) = \sum_{K \in \oh} \ell_K(v_K),
$
where
$\ell_K \in V(K)'$, 
it is elementary
to prove that
$
  \| \ell \|_{V'}^2 = \sum_{K \in \oh} \| \ell_K \|_{V(K)'}^2,
$
i.e., 
\begin{equation}
  \label{eq:4}
  \left( \sup_{v \in V} \frac{\vert \ell(v) \vert} 
  {\hspace{0.2cm}\Vert v \Vert_V \hspace{-0.2cm}}\hspace{0.2cm}\right)^2 
   = 
   \sum_{K \in \oh} 
   \left( \sup_{v_K \in V(K)} \frac{\vert \ell_K(v_K) \vert}
     {\hspace{0.5cm}\Vert v_K \Vert_{V(K)}\hspace{-0.5cm}}\hspace{0.5cm} \right)^2. 
\end{equation}
Some of our interface spaces have such functionals, e.g., 
the function $\hat{\sigma}_n$ in $H^{-1/2}(\d\oh)$ gives rise to 
$\ell(v) = \ip{ \hat{\sigma}_n, v}_h$ where
\[
 \ip{ \hat{\sigma}_n, v}_h = 
\sum_{K \in \oh} \ip{ \hat{\sigma}_n, v}_{H^{-1/2}(\d K)\times 
  H^{1/2}(\d K)},
\]
is a functional acting on $ v \in H^1(\oh)$ which is the sum of
component functionals
$\ell_K(v) =\ip{ \hat{\sigma}_n, v}_{H^{-1/2}(\d K)\times
  H^{1/2}(\d K)}$
acting on $v_K = v|_K$ over every $K \in \oh$. Other functionals like
$\ip{ \hat{E}_{\dv}, F}_h$ are defined similarly. We are now ready
to state a few basic relationships between the interface and broken spaces.
\jrev{As usual, we define $\Ho^1(\om)=\{ v \in H^1(\om): v|_{\d\om}=0\},$
$\Hdivo\om = \{ \tau \in \Hdiv\om: \tau\cdot n|_{\d\om} =0\}$, and 
$\Hcurlo\om=\{ F\in \Hcurl\om: n \times F|_{\d \om} = 0\}.$}
\begin{theorem}
\label{thm:duality} %
The following identities hold for any interface space function
$ \hat{\sigma}_n $ in $ H^{-1/2}(\d \oh),$ $\hat u$ in
$ H^{1/2}(\d \oh),$ $ \hat{E}_{\dv}$ in $ \Hhdiv {\d K},$ and
$\hat{F}_{\tp}$ in $ \Hhcurl {\d K}$.
\begin{subequations}
  \label{eq:duality}
\begin{align}
  \label{eq:duality1}
  \| \hat{\sigma}_n \|_{H^{-1/2}(\d \oh)}
  & 
    = \!\sup_{u \in H^1(\oh)} \!
    \frac{ \vert \langle \hat{\sigma}_n , u \rangle_h \vert }
    {\hspace{0.5cm}\Vert  u \Vert_{H^1(\oh)}\hspace{-0.5cm}}\;,\hspace{0.5cm}
  \\ \label{eq:duality2}
  \Vert \hat u \Vert_{H^{1/2}(\d\oh)} 
  & = \!\sup_{\sigma \in \Hdiv \oh} \!\!\!
    \frac{ \vert \langle n \cdot \sigma,\hat u \rangle_h \vert }
    {\hspace{0.5cm}\Vert \sigma \Vert_{\Hdiv{ \oh}}\hspace{-0.5cm}}\;,
  \\ \label{eq:duality3}
  \Vert \hat{E}_{\dv} \Vert_{\Hhdiv{\ptl \oh}}
    & = \sup_{F \in \Hcurl \oh} 
      \frac{\vert \langle \hat{E}_{\dv} , F \rangle_h \vert}
      {\hspace{0.5cm} \Vert F \Vert_{\Hcurl{ \oh}}\hspace{-0.5cm}}\;,
  \\ \label{eq:duality4}
    \Vert  \hat{F}_{\tp} \Vert_{\Hhcurl{\d \oh}}
    & 
      = \sup_{ E \in \Hcurl \oh} \!\!
      \frac{\vert \langle n \times E, \hat{F}_{\tp} \rangle_h \vert }
      {\hspace{0.5cm} \| E \|_{\Hcurl{ \oh}} \hspace{-0.5cm}}\;.
\end{align}
\end{subequations}
 For any broken space function
$v \in H^{1}(\om_h), \tau \in \Hdiv \oh,$ and
$ F \in \Hcurl \oh$, 
\begin{subequations} \label{eq:conformequiv}
  \begin{align}
    \label{eq:conformequiv1}
    v \in \Ho^1(\om) 
    &
    \iff \langle \hat{\sigma}_n,v \rangle_h = 0
      && \forall \, \hat{\sigma}_n\in H^{-1/2}({\d\oh}),
    \\     \label{eq:conformequiv2}
    \tau \in \Hdivo\om
    & \iff \langle \tau \cdot n, \hat u  \rangle_h = 0
      && \forall \, \hat u  \in H^{1/2}({\d\oh}),
    \\     \label{eq:conformequiv3}
    F \in\Hcurlo\om 
    & \iff \langle \hat{E}_{\dv} ,F \rangle_h = 0
      && \forall \, \hat{E}_{\dv} \in \Hhdiv\oh.
  \end{align}
\end{subequations}
\end{theorem}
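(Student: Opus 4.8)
The plan is to prove the two groups of identities \eqref{eq:duality} and \eqref{eq:conformequiv} separately, reducing each to the single-element results of Lemma~\ref{lem:duality} together with the product-space identity \eqref{eq:4}. Take \eqref{eq:duality1} as the model. The functional $u\mapsto\ip{\hat\sigma_n, u}_h$ on $H^1(\oh)=\prod_K H^1(K)$ is the sum of the component functionals $u_K\mapsto\ip{\hat\sigma_n, u_K}_{H^{-1/2}(\d K)\times H^{1/2}(\d K)}$, so \eqref{eq:4} gives that its squared dual norm is $\sum_{K\in\oh}\bigl(\sup_{u_K\in H^1(K)}|\ip{\hat\sigma_n, u_K}|/\|u_K\|_{H^1(K)}\bigr)^2$. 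By the first identity of \eqref{eq:sup1a}, each summand equals the element quotient norm $\|\hat\sigma_n\|_{H^{-1/2}(\d K)}^2$, so the right-hand side of \eqref{eq:duality1} equals $\bigl(\sum_{K}\|\hat\sigma_n\|_{H^{-1/2}(\d K)}^2\bigr)^{1/2}$; the same reduction using \eqref{eq:sup1b}, \eqref{eq:sup2a}, \eqref{eq:sup2b} handles \eqref{eq:duality2}--\eqref{eq:duality4}.

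It then remains to localize the left-hand global quotient norm, i.e.\ to show $\|\hat\sigma_n\|_{H^{-1/2}(\d\oh)}^2=\sum_{K}\|\hat\sigma_n\|_{H^{-1/2}(\d K)}^2$. The inequality ``$\ge$'' is immediate: the global minimum-energy extension $\sigma^*\in\Hdiv\om$ of $\hat\sigma_n$ restricts on each $K$ to an $\Hdiv K$-extension of $\hat\sigma_n|_{\d K}$, whence $\|\sigma^*\|_{\Hdiv\om}^2=\sum_K\|\sigma^*|_K\|_{\Hdiv K}^2\ge\sum_K\|\hat\sigma_n\|_{H^{-1/2}(\d K)}^2$. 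For ``$\le$'' I would assemble the element-wise extensions: solving \eqref{eq:3} on each $K$ produces a field whose outward normal trace on $\d K$ is $\hat\sigma_n|_{\d K}$, and because $\hat\sigma_n$ is by definition the global trace $\trd$ of a field in $\Hdiv\om$, its components on the two sides of any shared face are consistent, so the assembled field has continuous normal component and lies in $\Hdiv\om$; it is thus an admissible competitor in the global minimization. This gluing step---where single-valuedness of the interface datum, built into the definition of the interface space as a trace of the unbroken space, is exactly what forces the element minimizers to assemble into a conforming global field---is the crux of \eqref{eq:duality}, and the $\Hcurl$ cases follow identically after replacing normal by tangential traces and \eqref{eq:3} by \eqref{eq:HcurlD}.

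For the conformity characterizations \eqref{eq:conformequiv} I would start from the element Green's identities defining the traces, namely $\ip{n\cdot\sigma, v}_{\d K}=(\dive\sigma, v)_K+(\sigma,\grad v)_K$ and its curl analogue. In the forward direction, if $v\in\Ho^1(\om)$ then $v$ has a single-valued trace; summing the identity over $K$, the interior-face contributions cancel in pairs (the test field $\sigma$ is normal-continuous and $v$ is single valued) and the surviving term $\ip{n\cdot\sigma, v}_{\d\om}$ vanishes by the homogeneous boundary condition, so $\ip{\hat\sigma_n, v}_h=0$ for every $\hat\sigma_n$. The statements \eqref{eq:conformequiv2} and \eqref{eq:conformequiv3} are the same with the roles of trace and test field interchanged.

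The converse implications are where the real work lies, and I would carry them out in two steps. First, test only against interface functions $\trd\sigma$ with $\sigma\in(C_c^\infty(\om))^3$: for such $\sigma$ the summed Green's identity collapses, via $\sum_K\bigl[(\dive\sigma, v)_K+(\sigma,\grad v)_K\bigr]=\sum_{f}\int_f[v]\,\sigma\cdot n$ over interior faces $f$, to a pairing of the jump $[v]$ against $\sigma\cdot n$; since $\sigma\cdot n$ ranges over a dense set on each face and a single face can be isolated by the support of $\sigma$, the hypothesis forces $[v]=0$ on every interior face, i.e.\ $v\in H^1(\om)$. Second, with $v$ now conforming, testing against a general $\hat\sigma_n=\trd\sigma$ reduces (interior cancellation again) to $\ip{n\cdot\sigma, v}_{\d\om}=0$, and since the normal trace of $\Hdiv\om$ covers $H^{-1/2}(\d\om)$ this forces $v|_{\d\om}=0$, so $v\in\Ho^1(\om)$; the normal-jump version for $\tau$ and the tangential-jump version for $F$ follow the identical pattern. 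I expect this jump-annihilation step to be the main obstacle: one must argue rigorously that vanishing of the pairing against all smooth compactly supported interface data forces the distributional jump to vanish, which rests on a density argument in the trace spaces on individual faces and on localizing the test field to a single interface.
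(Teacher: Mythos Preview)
Your treatment of \eqref{eq:duality} matches the paper's: both reduce the right-hand sides via \eqref{eq:4} and Lemma~\ref{lem:duality} to the sum of element quotient norms. You correctly isolate a step the paper leaves implicit, namely that the global quotient norm localizes as $\|\hat\sigma_n\|_{H^{-1/2}(\d\oh)}^2=\sum_K\|\hat\sigma_n\|_{H^{-1/2}(\d K)}^2$. However, your gluing argument for ``$\le$'' is imprecise as written: you appeal to consistency of $\hat\sigma_n$ on the two sides of a shared face, but elements of $H^{-1/2}(\d K)$ do not in general restrict to individual faces (the paper itself cautions about this just before defining the interface spaces). A cleaner route avoids faces entirely: the kernel of $\trd$ restricted to $\Hdiv\om$ is exactly $\prod_K\Hdivo K$, so the global infimum over the coset $\sigma_0+\ker(\trd)$ decouples element by element. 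Equivalently, the assembled field of local minimizers differs from any global conforming extension $\sigma_0$ by an element of $\prod_K\Hdivo K\subset\Hdiv\om$, and is therefore itself in $\Hdiv\om$.

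For \eqref{eq:conformequiv} your forward direction agrees with the paper's. For the converse you propose a two-step face-by-face argument: isolate each interior face with a compactly supported test field to kill the jump $[v]$, then handle the boundary. This can be made to work, but the density step you flag as ``the main obstacle'' is genuinely delicate for general Lipschitz elements. The paper sidesteps it with a shorter distributional argument (shown for the curl case): for $\phi\in\D(\om)^3$ one computes the action of the distribution $\curl F$ as
\[
(\curl F)(\phi)=(F,\curl\phi)_\om=(\curl F,\phi)_h-\ip{n\times\phi,F}_h=(\curl F,\phi)_h,
\]
the interface term vanishing by hypothesis because $\trc\phi\in\Hhdiv{\d\oh}$. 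This shows at once that the distributional curl equals the piecewise curl, so $F\in\Hcurl\om$; the boundary condition then follows by testing against all of $\Hcurl\om$ and integrating by parts globally. No face localization or density in face trace spaces is needed.
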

\begin{proof}
  The identities immediately follow from Lemma~\ref{lem:duality}
  and~\eqref{eq:4}. The proofs of the three equivalences
  in~\eqref{eq:conformequiv} are similar, so we will only detail the
  last one.  If $F$ is in $\Hcurlo\om$, then choosing any
  $E \in \Hcurl\om$ such that $\trc E = \hat{E}_{\dv}$ and 
  integrating by parts over entire $\om$, 
  \begin{align*}
    (\curl E, F)_\om + (E, \curl F)_\om = 0,
  \end{align*}
  because of the boundary conditions on $F$ on $\d\om$. Now, if the
  left hand side is integrated by parts again, this time element by
  element, then we find that $\ip{ \hat{E}_{\dv}, F}_h=0.$

  Conversely, given that $\ip{ \hat{E}_{\dv}, F}_h=0$ for any $F$ in
  $\Hcurl \oh$, consider $\curl F \in (\D(\om)^3)'$. As a
  distribution, $\curl F$ acts on $\phi \in \D(\om)^3$, and satisfies
  \[
  (\curl F)(\phi) = (F, \curl \phi)_\om = 
  ( \curl F, \phi)_h - \ip{ n \times \phi, F}_h = ( \curl F, \phi)_h,
  \]
  where we have integrated by parts element by element and denoted
  \[
  (\cdot, \cdot)_h = \sum_{K \in \oh} (\cdot, \cdot)_K.
  \]
  This notation also serves to emphasize that the term $\curl F$
  appearing on the right-hand side above is a derivative taken
  piecewise, element by element. Clearly $\curl F|_K$ is in $L^2(K)^3$
  for all $K \in\oh$ since $F \in \Hcurl \oh$, so the distribution
  $\curl F$ is in $L^2(\om)$. Having established that
  $F \in \Hcurl\om$, we may now integrate by parts to get
  $\ip{ n \times E, F}_{\Hhdiv{\d\om} \times \Hhcurl{\d\om}} = (\curl
  E, F)_\om + (E, \curl F)_\om = \ip{ n \times E, F}_h=0$
  for all $E \in \Hcurl\om$. This shows that the trace
  $(n \times F)\times n|_{\d \om}=0$, i.e, $F \in \Hcurlo\om$.
\end{proof}

\begin{remark}
  While $\prod_{K \in \oh}H^{-1/2}(\d K)$ and
  $\prod_{K\in\oh} H^{1/2}(\d K)$ are dual to each other, our
  interface spaces $H^{-1/2}(\d \oh)$ and $H^{1/2}(\d \oh)$ are {\em
    not} dual to each other in general.
\end{remark}

\begin{remark}
  Equivalences analogous to~\eqref{eq:conformequiv} hold with
  interface subspaces  
  \begin{align*}
    \Ho^{1/2}({\d\oh}) 
    & = \trg \Ho^1(\om),
    &
      \Hhdivo\oh
    & = \trp \Hcurlo\om,
    \\
    \Hhcurlo{\d\oh}
    & = \trc \Hcurlo\om,
    &
      \Ho^{-1/2}({\d\oh}) 
    & = \trd \Hdivo\om.
  \end{align*}
  By a minor modification of the arguments in the proof in
  Theorem~\ref{thm:duality}, we can prove that for any
  $v \in H^{1}(\om_h), \tau \in \Hdiv \oh,$ and $ F \in \Hcurl \oh$,
\begin{subequations} \label{eq:conformequivo}
  \begin{align}
    \label{eq:conformequiv4}
    v \in H^1(\om) 
    &
    \iff \langle \hat{\sigma}_n,v \rangle_h = 0
      && \forall \, \hat{\sigma}_n\in \Ho^{-1/2}({\d\oh}),
    \\     \label{eq:conformequiv5}
    \tau \in \Hdiv\om
    & \iff \langle \tau \cdot n, \hat u  \rangle_h = 0
      && \forall \, \hat u  \in \Ho^{1/2}({\d\oh}),
    \\     \label{eq:conformequiv6}
    F \in\Hcurl\om 
    & \iff \langle \hat{E}_{\dv} ,F \rangle_h = 0
      && \forall \, \hat{E}_{\dv} \in \Hhdivo\oh.
  \end{align}
\end{subequations}
\end{remark}

\section{Breaking variational forms}  \label{sec:break-forms}

The goal in this section is to investigate in what sense a variational
formulation can be reformulated using broken spaces without losing
stability. We will describe the main result in an abstract
setting first and close  the section with simple examples that use the
results of the previous section.

Let $X_0$ and $Y$ denote two Hilbert spaces and let $Y_0$ be a closed
subspace of $Y$.  For definiteness, we assume that all our spaces in
this section are over $\CCC$ (but our results hold also for spaces
over $\RRR$). In the examples we have in mind, $Y$ will be a broken
space, while $Y_0$ will be its unbroken analogue (but no such
assumption is needed to understand the upcoming results
abstractly). The abstract setting involves a continuous sesquilinear
form $b_0 : X_0 \times Y \to \CCC$ satisfying the following
assumption.

\begin{assumption}
  \label{asm:A0}
  There is a positive constant $c_0$ such that 
  \[
  c_0 \| x \|_{X_0} \le \sup_{y \in Y_0} \frac{ | b_0(x,y) |}{\| y\|_Y}
  \qquad \forall  x\in X_0.
  \]
\end{assumption}

It is a well-known result of \jrev{\Babuska\ and}
Ne\v{c}as~\cite{Babus70a,Necas67} that Assumption~\ref{asm:A0} together
with triviality of
\begin{equation}
  \label{eq:1}
  Z_0 = \{ y \in Y_0: \; b_0(x,y)=0 \text{ for all } x \in X_0 \}
\end{equation}
guarantees wellposedness of the following variational problem: Given
$\ell \in Y_0'$ (the space of conjugate linear functionals on $Y_0$),
find $x \in X_0$ satisfying
\begin{equation}
  \label{eq:5}
  b_0(x,y) = \ell(y) \quad \forall y \in Y_0.  
\end{equation}
When $Z_0$ is non-trivial, we can still obtain existence of a solution
$x$ provided the load functional $\ell$ satisfies the compatibility
condition $\ell(z)=0$ for all $z \in Z_0$. In~\eqref{eq:5},
the {\em trial space} $X_0$ need
not be the same as the {\em test space}~$Y_0$.

To describe a ``broken'' version of~\eqref{eq:5}, we need another 
Hilbert space $\hat X$, together with a continuous sesquilinear form
$\hat b : \hat X \times Y \to \CCC$. In applications $Y$ and $\hat X$
will usually be set to a broken Sobolev space and an interface space,
respectively.  Define 
\[
  b( \, (x,\hat x), \, y) = b_0(x,y) + \hat b( \hat x, y).
\]
Clearly $b : X \times Y \to \CCC$ is continuous, where 
\begin{equation}
  \label{eq:X}
X = X_0 \times \hat X  
\end{equation}
is a Hilbert space under the Cartesian product norm.  Now consider the
following new broken variational formulation: Given $\ell \in Y'$,
find $x \in X_0$ and $\hat x \in \hat X$ satisfying
\begin{equation}
  \label{eq:8}
  b( \, (x,\hat x), \, y) = \ell(y) \quad \forall y \in Y.
\end{equation}
The close relationship between problems~\eqref{eq:8} and~\eqref{eq:5} is 
readily revealed under the following assumption.

\begin{assumption} \label{asm:hybrid}
  The spaces $Y_0, Y,$ and $\hat X$ satisfy 
  \begin{equation}
    \label{eq:Yo}
    Y_0 = \{ y \in Y : \; \hat b( \hat x, y)=0 \text{ for all } \hat x \in \hat X\}
  \end{equation}
  and there is a positive constant $\hat c$ such that 
  \begin{equation}
    \label{eq:infsupinterface}
  \hat c \,\| \hat x \|_{\hat X} 
  \le \sup_{y \in Y} \frac{ | \hat b( \hat x, y) |}{ \| y\|_Y}
  \quad\forall \hat x \in \hat X.    
  \end{equation}
\end{assumption}

Under this assumption, we present a {simple result} which shows that
the broken form~\eqref{eq:8} inherits stability from the original
unbroken form~\eqref{eq:5}. {A very similar such abstract result was
  formulated and proved in~\cite[Appendix~A]{GargPrudhZee14} and used
  for other applications. Our proof is simple, unsurprising, and uses
  the same type of arguments from the early days of mixed
  methods~\cite[p.~40]{BrezzForti91}: stability of a larger system can
  be obtained in a triangular fashion by first restricting to a
  smaller subspace and obtaining stability there, followed by a
  backsubstitution-like step.  \jrev{Below, $\| b_0\|$ denotes the smallest
  number $C$ for which the inequality
  $|b_0(x,y)| \le C \| x \|_{X_0} \|y\|_{Y}$ holds for all $x\in X_0$
  and all $y \in Y$.} }

\begin{theorem}   \label{thm:hybrid}
  Assumptions~\ref{asm:A0} and~\ref{asm:hybrid} imply 
  \[
  c_1 \| (x, \hat x)\|_X
  \le \sup_{ y \in Y} \frac{ | b( \, (x,\hat x), \, y) | }
  {\| y \|_Y },
  \]
  where $c_1$ is defined by 
  \[
  \frac{1}{c_1^2} = 
  \frac{1}{c_0^2} + 
   \frac{1}{\hat c^2}\left( \frac{\| b_0 \|}{c_0} + 1   \right)^2. 
  \] 
  Moreover, if $Z = \{ y \in Y: \; b(\,(x,\hat x),y)=0$ for all
  $x \in X_0$ and $\hat x$ in $\hat X \}$, then 
  \[
  Z = Z_0.
  \]
  Consequently, if $Z_0=\{0\}$, then~\eqref{eq:8} is uniquely
  solvable and moreover the solution component $x$ from~\eqref{eq:8}
  coincides with the solution of~\eqref{eq:5}.
\end{theorem}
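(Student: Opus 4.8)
The plan is to follow the triangular strategy signaled in the text preceding the statement: first exploit the test subspace $Y_0$ to control the $X_0$-component $x$, then use the inf-sup condition~\eqref{eq:infsupinterface} on $\hat b$ to control the interface component $\hat x$, back-substituting the bound already obtained for $\|x\|_{X_0}$. Throughout I would abbreviate $S = \sup_{y\in Y} |b(\,(x,\hat x),\,y)|/\|y\|_Y$, the quantity the theorem requires us to bound from below.

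For the first step, fix $(x,\hat x)\in X$ and observe that whenever $y\in Y_0$, the defining property~\eqref{eq:Yo} gives $\hat b(\hat x,y)=0$, so that $b(\,(x,\hat x),\,y)=b_0(x,y)$. Assumption~\ref{asm:A0} then yields
\[
c_0\|x\|_{X_0}\le \sup_{y\in Y_0}\frac{|b_0(x,y)|}{\|y\|_Y}=\sup_{y\in Y_0}\frac{|b(\,(x,\hat x),\,y)|}{\|y\|_Y}\le S,
\]
since the supremum over the smaller set $Y_0\subseteq Y$ cannot exceed $S$; hence $\|x\|_{X_0}\le S/c_0$. For the second step I would write $\hat b(\hat x,y)=b(\,(x,\hat x),\,y)-b_0(x,y)$, and combine the triangle inequality with continuity of $b_0$ to get $|\hat b(\hat x,y)|\le |b(\,(x,\hat x),\,y)|+\|b_0\|\,\|x\|_{X_0}\|y\|_Y$. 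Dividing by $\|y\|_Y$, taking the supremum over $y\in Y$, and invoking~\eqref{eq:infsupinterface} produces $\hat c\,\|\hat x\|_{\hat X}\le S+\|b_0\|\,\|x\|_{X_0}\le S\,(1+\|b_0\|/c_0)$ after inserting the first-step estimate. Squaring the two bounds and adding gives $\|(x,\hat x)\|_X^2=\|x\|_{X_0}^2+\|\hat x\|_{\hat X}^2\le S^2/c_1^2$ with exactly the stated $c_1$, which is the desired inf-sup inequality.

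For the kernel identity I would test the two components separately. If $y\in Z$, then $b_0(x,y)+\hat b(\hat x,y)=0$ for all $x$ and $\hat x$; choosing $x=0$ forces $\hat b(\hat x,y)=0$ for all $\hat x$, i.e.\ $y\in Y_0$ by~\eqref{eq:Yo}, while choosing $\hat x=0$ forces $b_0(x,y)=0$ for all $x$, so $y\in Z_0$. Conversely, if $y\in Z_0$ then $y\in Y_0$ makes $\hat b(\hat x,y)=0$ and $b_0(x,y)=0$ holds by definition, whence $b(\,(x,\hat x),\,y)=0$ for all $(x,\hat x)$ and $y\in Z$. Thus $Z=Z_0$.

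Finally, assuming $Z_0=\{0\}$, the inf-sup inequality just proved together with $Z=Z_0=\{0\}$ are precisely the two hypotheses of the \Babuska--Ne\v{c}as theorem for the form $b$ on $X\times Y$, giving unique solvability of~\eqref{eq:8}. To identify the $x$-component, I would restrict the solution of~\eqref{eq:8} to test functions $y\in Y_0$: there $\hat b(\hat x,y)=0$, so $b_0(x,y)=\ell(y)$ for all $y\in Y_0$, meaning $x$ solves~\eqref{eq:5}; since Assumption~\ref{asm:A0} with $Z_0=\{0\}$ makes~\eqref{eq:5} uniquely solvable, this $x$ must be its solution. I do not expect a serious obstacle; the only point demanding care is the constant bookkeeping in the second step, where the cross term $b_0(x,y)$ cannot be controlled by testing $b$ directly and must instead be absorbed through $\|b_0\|$ and the already-established bound on $\|x\|_{X_0}$.
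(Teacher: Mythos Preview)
Your proof is correct and follows essentially the same triangular approach as the paper: first bound $\|x\|_{X_0}$ by restricting to $Y_0$ (where $\hat b$ vanishes) and invoking Assumption~\ref{asm:A0}, then bound $\|\hat x\|_{\hat X}$ via~\eqref{eq:infsupinterface} after writing $\hat b = b - b_0$ and absorbing the $b_0$ term using the first bound; the kernel identity and the concluding uniqueness argument likewise match the paper's reasoning.
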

\begin{proof}
  We need to bound $\| x \|_{X_0}$ and $\| \hat x \|_{\hat X}$. First,
  \begin{align*}
    c_0 \| x \|_{X_0}
    & \le \sup_{y_0 \in Y_0} \frac{ | b_0(x,y) |}{\| y\|_Y}
    && \text{ by Assumption~\ref{asm:A0},}
    \\
    & \le \sup_{y_0 \in Y_0} \frac{ | b_0(x,y) + \hat b(\hat x,y) |}{\| y\|_Y}
    && \text{ by Assumption~\ref{asm:hybrid}, \eqref{eq:Yo}}
    \\
    &\le 
      \sup_{y \in Y} \frac{ |  b( \, (x,\hat x), \, y) |}{\| y\|_Y}
    && \text{ as } Y_0 \subseteq Y.
  \end{align*}
  Next, to bound $\| \hat x \|_{\hat X}$, using \eqref{eq:infsupinterface} of 
  Assumption~\ref{asm:hybrid},  
  \begin{align*}
    \hat c\, \| \hat x \|_{\hat X} 
    & 
      \le \sup_{y \in Y} \frac{ | \hat b( \hat x, y) |}{ \| y\|_Y}
      =       
      \sup_{y \in Y} \frac{ |  b( \, (x,\hat x), \, y) -  b_0( x, y) |}{ \| y\|_Y}
      \\
    & \le 
      \,\| b_0\| \,\| x \|_{X_0}
      + \sup_{y \in Y} \frac{ |  b( \, (x,\hat x), \, y)| }{ \| y\|_Y}.
  \end{align*}
  \jrev{Using} the already proved bound for
  $\| x \|_{X_0}$ in the last inequality and combining,
  \begin{align*}
    \| (x,\hat x) \|_X^2 
    & = \| x\|_{X_0}^2 + \| \hat x \|_{\hat X}^2\\
    & \le \left( \frac{1}{c_0}  
      \sup_{y \in Y} \frac{ |  b( \, (x,\hat x), \, y)| }{ \| y\|_Y}\right)^2
      + 
      \left(
      \frac{1}{ {\hat c}}
      \left(
      \frac{\| b_0\|}{c_0 } + 1
      \right)
      \sup_{y \in Y} \frac{ |  b( \, (x,\hat x), \, y) |}{\| y\|_Y}
      \right)^2
  \end{align*}
  from which the inequality of the theorem follows.

  Finally, to prove that $Z=Z_0$, using~\eqref{eq:Yo},
  \begin{align*}
    y \in Z & \iff b_0(x, y)=0 \text{ for all } x \in X_0\text{ and } \hat b(\hat x,y) =0 \text{ for all } \hat x \in \hat X
    \\
    & 
      \iff  b_0(x, y)=0 \text{ for all } x \in X_0\text{ and } 
      y \in Y_0,
  \end{align*}
  which holds if and only if $y \in Z_0$. 
\end{proof}

\begin{remark}
  Note that in the proof of the inf-sup condition, we did not fully
  use~\eqref{eq:Yo}. We only needed
  $ Y_0 \subseteq \{ y \in Y : \; \hat b( \hat x, y)=0 \text{ for all
  } \hat x \in \hat X\}$.
  The reverse inclusion was needed to conclude that $Z=Z_0$.
\end{remark}

\begin{remark}
  It is natural to ask, in the same spirit as
  Theorem~\ref{thm:hybrid}, if the numerical solutions of DPG methods
  using discretizations of the broken formulations coincide with those
  of discretizations of the original unbroken formulation. A result
  addressing this question is given
  in~\cite[Theorem~2.6]{BoumaGopalHarb14}.
\end{remark}

In the remainder of this section, we illustrate how to apply this
theorem on some examples.

\begin{example}[Primal DPG formulation] \label{eg:primal} %
  Suppose $f \in L^2(\om)$ and $u$ satisfies
  \begin{subequations}
    \label{eq:Poisson}
    \begin{align}
      \label{eq:Poisson1}
      -\Delta u & = f &&\text{ in } \om, 
      \\
      \label{eq:Poisson2}
      u & =0  &&\text{ on } \d\om.
    \end{align}
  \end{subequations}
  The standard variational formulation for this problem, finds
  $u$ { in }  $\Ho^1(\om)$  such that 
  \begin{equation}
    \label{eq:7}
    (\grad u, \grad v)_\om = (f, v)_\om\qquad \forall v \in \Ho^1(\om).
  \end{equation}
  This form is obtained by multiplying~\eqref{eq:Poisson1} by
  $v\in \Ho^1(\om)$ and integrating by parts over the entire domain
  $\om$. If on the other hand, we multiply~\eqref{eq:Poisson1} by a
  $v \in H^1(\oh)$ and integrate by parts element by element,
  then we obtain another variational formulation proposed
  in~\cite{DemkoGopal13a}: Solve for $u$ { in }$\Ho^1(\om)$ as well as a
  separate unknown $\hat{\sigma}_n \in H^{-1/2}(\d\oh)$ (representing
  the fluxes \jrev{$-n \cdot \grad u$} along mesh interfaces) satisfying
  \begin{equation}
    \label{eq:6}
  (\grad u, \grad y)_h + \ip{ \hat{\sigma}_n, y}_h 
  = (f, y)_\om
  \qquad \forall y \in H^1(\oh).
  \end{equation}
  We can view this as the broken version of~\eqref{eq:7} by  setting
  \begin{alignat*}{4}
    &X_0   = \Ho^1(\om),        &\quad&   Y_0  = \Ho^1(\om),
    \\
    &\hat X  = H^{-1/2}(\d \oh), &\quad& Y  = H^1(\oh),
    \\
    &b_0(u,y) = (\grad u, \grad y)_h, &\qquad& \hat b( \hat{\sigma}_n, y) 
    = \ip{ \hat{\sigma}_n, y}_h.
  \end{alignat*}
  For these settings, the conditions required to apply Theorem~\ref{thm:hybrid} are
  verified as {follows.}
  \begin{align*}
    \text{Coercivity of $b_0(\cdot,\cdot)$ on $Y_0$}
    & \implies \text{Assumption~\ref{asm:A0} holds.}
    \\
    \text{Theorem~\ref{thm:duality}, \eqref{eq:duality1}} 
    & \implies 
      \text{\eqref{eq:infsupinterface} of Assumption~\ref{asm:hybrid} holds with
      $\hat c =1$.}  
      \\
    \text{Theorem~\ref{thm:duality}, \eqref{eq:conformequiv1}}
    & \implies 
      \text{\eqref{eq:Yo} of Assumption~\ref{asm:hybrid} holds.}
  \end{align*}
  Noting that $Z_0=\{0\}$, an application of Theorem~\ref{thm:hybrid}
  implies that problem~\eqref{eq:6} is wellposed. This wellposedness
  result also shows that~\eqref{eq:6} is uniquely solvable with a more
  general {right-hand} side $f$ in $H^1(\oh)'$.

  An alternate (and longer) proof of this wellposedness result
  can be found in~\cite{DemkoGopal13a}.  The classical work
  of~\cite{RaviaThoma77a} also uses the spaces $H^1(\oh)$ and
  $H^{-1/2}(\d \oh)$, but proceeds to develop a Bubnov-Galerkin hybrid
  formulation different from the Petrov-Galerkin
  formulation~\eqref{eq:6}.~\hfill\qeg
\end{example}

\begin{example}[Many formulations of an elliptic
  problem] \label{eg:diffus} %
  Considering a model problem involving diffusion, convection, and
  reaction terms, we now show how to analyze, all at once, its various
  variational formulations.  The diffusion coefficient
  $a=\alpha^{-1}: \om \to \RRR^{3 \times 3}$ is a symmetric matrix
  function which is uniformly bounded and positive definite on $\om$,
  the convection coefficient is $\beta \in L^\infty(\om)^3$ which
  satisfies $\dive (a\beta)=0,$ and reaction is incorporated through a
  non-negative $\gamma \in L^\infty (\om)$.  The classical form of the
  equations on $\om$ are $\sigma = a \grad u + a \beta u + f_1 $ and
  $-\dive \sigma + \gamma u = f_2$ (for some {given
    $f_1\in L^2(\om)^3$ and $f_2\in L^2(\om)$}) together with the
  boundary condition $u|_{\d \om}=0$. {This can be written in operator form
    using}
  \begin{equation}
    \label{eq:Adiffus}    
  A \begin{bmatrix}
      \sigma \\ u 
    \end{bmatrix}
    = 
    \begin{bmatrix}
      \alpha \sigma - \grad u - \beta u  
      \\
      \dive \sigma - \gamma u 
    \end{bmatrix}, 
    \qquad 
    A^*
    \begin{bmatrix}
      \sigma \\ u 
    \end{bmatrix}
    = 
    \begin{bmatrix}
      \alpha \sigma - \grad u
      \\
      \dive \sigma - \beta \cdot \sigma - \gamma u 
    \end{bmatrix}.
  \end{equation}
  We begin with the formulation closest to the classical form.

  \begin{figure}
    \centering
    \begin{center}
      \begin{tikzpicture}[
        prob/.style={
          minimum size=6mm,
          inner sep=0pt, 
          very thick,  draw=black!50,
          top color=white, bottom color=blue!50!black!20,
          font=\tiny
        },
        to/.style={->, >=latex', semithick, 
          double,double distance=2pt,    
        },
        every node/.style={align=center}
        ]

        %
        %

        \matrix[row sep=2mm,column sep=3mm] {
          & &\node[prob] (S) {
            \begin{tabular}{l}
              Assumption~\ref{asm:A0} holds for the \\ 
              {\bf{Strong form}} with $b_0 = b_0^S$
            \end{tabular}
          }; 
          & &\node[prob] (U) {
            \begin{tabular}{l}
              Assumption~\ref{asm:A0} holds for the\\
              {\bf{Ultraweak form}} with $b_0 = b_0^U$
            \end{tabular}
          }; & & & 
          \\
          \node[prob] (P) {
            \begin{tabular}{l}
              Assumption~\ref{asm:A0} holds for the\\ 
              {\bf{Primal form}} with $b_0 = b_0^P$
            \end{tabular}
          }; 
          \\ 
          & & 
          \node[prob] (D) {
            \begin{tabular}{l}
              Assumption~\ref{asm:A0} holds for the\\ 
              {\bf{Mixed form}} with $b_0 = b_0^M$
            \end{tabular}
          };
          & & 
          \node[prob] (M) {
            \begin{tabular}{l}
              Assumption~\ref{asm:A0} holds for the\\ 
              {\bf{Dual Mixed form}} with $b_0 = b_0^D$
            \end{tabular}
          };
          \\
        };

        \draw[to] (P) -- (S) ;
        \draw[to] (S) -- (U) ;
        \draw[to] (U) -- (D);
        \draw[to] (D)-- (P) ;

        \draw[to] (U)-- (M) ;
        \draw[to] (M)-- (S) ;
      \end{tikzpicture}
    \end{center}
    \caption{Chains of implications of inf-sup conditions}
    \label{fig:impl}
  \end{figure}
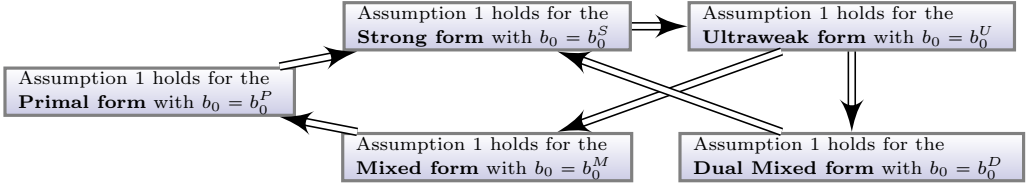

  \begin{description}
  \item[Strong form] 
    Let $x = (\sigma, u)$ be a group variable. Set spaces by 
    \begin{alignat*}{4}
      &X_0   = \Hdiv\om \times \Ho^1(\om),        &\quad& Y =Y_0=L^2(\om)^3 \times L^2(\om),
    \end{alignat*}
    and consider the problem of finding $x \in X_0,$ given $f \in Y$,  satisfying 
    $
    A   x = f.
    $
    We can trivially fit this into our variational
    framework~\eqref{eq:5} by setting
    $b_0$ to 
  \begin{gather*}
    \quad
    b_0^S( x, y) = (Ax, y)_\om.
  \end{gather*}
  \end{description}
  Unlike the remaining formulations below, there is no need to discuss
  a broken version of the above strong form as the test space already admits
  discontinuous functions.  The next formulation is often derived
  directly from a second order equation obtained by eliminating
  $\sigma$ from the strong form.
  \begin{description}
  \item [Primal form] %
    First, set spaces by 
    \begin{alignat*}{4}
    &X_0   = \Ho^1(\om),        &\quad&   Y_0  = \Ho^1(\om),
    \\
    &\hat X  = H^{-1/2}(\d \oh), &\quad& Y  = H^1(\oh).
    \end{alignat*}
    Then, with 
    $\hat b$ set to $\hat b^P( \hat{\sigma}_n, v)
    = \ip{ n\cdot\hat \sigma, \jrev{v}}_h$ and 
    $b_0$ set to 
    \begin{gather*}
      b_0^P(u,v) 
       = (a \grad u, \grad v)_h \jrev{+} ( a    \beta u, \grad v)_h + (\gamma u,v)_\om,
    \end{gather*}
    the standard primal formulation is~\eqref{eq:5} and its broken version is~\eqref{eq:8}.
  \end{description}
  Next, consider the formulation derived by multiplying each equation
  in the strong form by a test function and integrating {\em both}
  equations by parts, i.e., both equations are imposed weakly. It was
  previously studied in~\cite{CausiSacco05,DemkoGopal11a}, but we can
  now simplify its analysis considerably  using Theorem~\ref{thm:hybrid}.
  \begin{description}
  \item[Ultraweak form] %
    Set group variables $x = (\sigma, u)$, $y = (\tau, v)$, $\hat x =
    (\hat{\sigma}_n, \hat u)$, and 
    \begin{alignat*}{4}
    &X_0   = L^2(\om)^3 \times L^2(\om),        &\quad&   Y_0  =
    \Hdiv\om \times \Ho^1(\om),
    \\
    &\hat X  = \Ho^{1/2}(\d\oh) \times H^{-1/2}(\d \oh), &\quad& Y  =
    \Hdiv\oh \times H^1(\oh),
    \\
    & b_0^U( x,y) = (x, A^* y)_\oh,     & \quad & \hat b^U(\hat x, y) =
    \ip{\hat{\sigma}_n, v}_h \jrev{-} \ip{ \hat u, n \cdot \tau}_h.
    \end{alignat*}
    Formulations~\eqref{eq:5} and~\eqref{eq:8} with
    $b_0 = b_0^U$ and $ \hat b = \hat b^U$ are of the ultraweak type.
  \end{description}
  The fourth formulation, well-known as the mixed
  form~\cite{BrezzForti91}, is derived by weakly imposing (via
  integration by parts) the first equation of the strong form, but
  strongly imposing the second equation.
  \begin{description}
  \item[Dual Mixed form]  %
    Set the spaces by 
    \begin{alignat*}{4}
    &X_0   = \Hdiv \om \times L^2(\om),        &\qquad&   Y_0  = \Hdiv \om \times L^2(\om),
    \\
    &\hat X  = \Ho^{1/2}(\d\oh), &\qquad& Y  =  \Hdiv\oh \times L^2(\om).
    \end{alignat*}
    The well-known mixed formulation is then~\eqref{eq:5} with $b_0 =
    b_0^D$, 
    \begin{align*}
            b_0^D(\, (\sigma, u),\, (\tau, v)\,)
      & = (\alpha \sigma, \tau)_h + (u,\dive \tau)_h - (\beta        u,\tau)_h
        \\
      & + ( \dive \sigma, v)_h - (\gamma u,v)_h.
    \end{align*}
    Its broken version is~\eqref{eq:8} with $\hat b$ set to
    \jrev{$\hat b^D( \hat u, (\tau,v)) = -\ip{ \hat u, n \cdot \tau }_h$}.
  \end{description}
  Note that the well-known discrete hybrid mixed
  method~\cite{BrezzForti91,CockbGopal04} is also derived
  from~$b_0^D$. That method however works with a Bubnov-Galerkin
  formulation obtained by breaking both the trial and the test
  $\Hdiv\om$ components, while above we have broken only the test
  space.  The last formulation in this example reverses the roles by
  weakly imposing the second equation of the strong form and strongly
  imposing the first equation:
  \begin{description}
  \item[Mixed form]  %
    Set
    \begin{alignat*}{4}
    &X_0   = L^2(\om)^3\times \Ho^1(\om),        &\qquad&   Y_0  = L^2(\om)^3\times \Ho^1(\om),
    \\
    &\hat X  = H^{-1/2}(\d\oh), &\qquad& Y  =  L^2(\om)^3 \times H^1(\oh).
    \end{alignat*}
    The dual mixed formulation
    is~\eqref{eq:5} with $b_0 =
    b_0^M$, 
    \begin{align*}
      b_0^M(\, (\sigma, u),\, (\tau, v)\,)
      & = (\alpha \sigma, \tau)_\om - (\grad u, \tau)_\om - (\beta
        u,\tau)_\om 
        \\
      & - ( \sigma, \grad v)_\om -  (\gamma u,v)_\om.
    \end{align*}
    and its broken version is~\eqref{eq:8} with $\hat b$ set to $
    \hat b^M(  \hat{\sigma}_n, \jrev{(\tau,v)}) = \ip{ \hat{\sigma}_n, v}_h$.
  \end{description}
  The variational problem~\eqref{eq:5} with $b_0^M$ is sometimes
  called~\cite{Braes07} the {\em primal mixed} form to differentiate
  it with the {\em dual mixed} form given by $b_0^D$.  The broken
  formulation~\eqref{eq:8} with $b_0^M$ and
  $\hat b^M$ was called the {\em mild weak DPG formulation}
  in~\cite{BroerSteve14}. Their analysis can also be simplified now
  using Theorem~\ref{thm:hybrid}.

  In order to apply Theorem~\ref{thm:hybrid} to all these
  formulations, we need to verify Assumption~\ref{asm:A0}.  This can
  be done for all the formulations at once, because the six
  implications displayed in Figure~\ref{fig:impl} are proved
  in~\cite{Demko15} for the model problem of this example \jrev{(thus
    making the five statements in Figure~\ref{fig:impl} equivalent)}. We
    will not detail this proof here because we provide full proofs of
    similar implications for Maxwell equations in
    Section~\ref{sec:maxwell} (and this example is simpler than the
    Maxwell case). To apply these implications for the current
    example, we pick a formulation for which Assumption~\ref{asm:A0}
    is easy to prove: That the primal form is coercive
    $b_0^P(u,u) \ge C \| u \|_{H^1(\om)}^2$ follows immediately by
    integration by parts and the \Poincare\ inequality (under the
    simplifying assumptions we placed on the coefficients).  This
    verifies Assumption~\ref{asm:A0} for the primal form, which in
    turn verifies it for all the formulations by the above chain of
    equivalences. Assumption~\ref{asm:hybrid} can be immediately
    verified for all the formulations using
    either~\eqref{eq:conformequiv}
    or~\eqref{eq:conformequivo}. Together with the easily verified
    triviality of $Z_0$ in each case, we have proven the wellposedness
    of all the formulations above, including the broken
    ones.\hfill\qeg\end{example}

\section{The DPG method}  \label{sec:dpg}

In this section, we quickly introduce the DPG method, indicate why
the broken spaces are needed for practical reasons within the DPG
method, and recall known abstract conditions under which an error analysis
can be conducted.

Let $X$ and $Y$ be Hilbert spaces and let $b : X \times Y \to \CCC$ be
a continuous sesquilinear form.  In the applications we have in mind,
$X$ will always be of the form~\eqref{eq:X} (but we need not assume it
for the theory in this section). The variational problem is to find
{$x$ in $  X$}, given $\ell \in Y'$, satisfying
\begin{equation}
  \label{eq:11}
b(x,y) = \ell(y) \qquad \forall y \in Y.  
\end{equation}

The DPG method uses finite-dimensional subspaces $X_h \subset X$ and
$Y_h \subset Y$.  The test space used in the method is a subspace
$\Yhopt \subseteq Y_h$ of {\em approximately optimal test functions}
computed for any arbitrarily given trial space $X_h$. It is defined by
$ \Yhopt = T_h(X_h)$ where $T_h : X_h \to Y_h$ is given by
\begin{equation}
  \label{eq:14}
  (T_h z, y)_Y = b(z, y) \qquad \forall y \in Y_h.
\end{equation}
Here $(\cdot,\cdot)_Y$ is the inner product in $Y$, hence by Riesz
representation theorem on $Y_h$, the operator $T_h$ is well defined.
The discrete problem posed by the DPG discretization is to find
$x_h \in X_h$ satisfying
\begin{equation}
  \label{eq:pdpg}
b(x_h, y) = \ell(y) \qquad \forall y \in \Yhopt.
\end{equation}
For practical implementation purposes, it is important to note that
$T_h$ can be easily and inexpensively computed via~\eqref{eq:14} provided
the space $Y_h$ is a subspace of a broken space. Then~\eqref{eq:14}
becomes a series of small decoupled problems on each element.

For {\it a posteriori} error estimation, we use an estimator
$\tilde \eta$ that actually works for any $\tilde{x}_h$ in $X_h$,
computed as follows.  (Note that $\tilde{x}_h$ need not equal the
solution $x_h$ of~\eqref{eq:pdpg}.)  First we solve for
$\tilde{\veps}_h$ in $Y_h$ by
\begin{equation}
  \label{eq:23}
(\tilde{\veps}_h, y)_Y = \ell(y) - b(\tilde{x}_h, y), \qquad \forall y \in Y_h. 
\end{equation}
Again, this {amounts to} a local computation if $Y_h$ is a
subspace of a broken space. Then, set
\[
\tilde{\eta} = \| \tilde{\veps}_h \|_Y.
\] 
When $Y_h$ is a broken space, the element-wise norms of
$\tilde{\veps}_h$ serve as good error
estimators~\cite{DemkoGopalNiemi12}.  {The notations $\eta$ and
  $\veps_h$ (without tilde) refer to similarly computed quantities
  with $x_h$ in place of  $\tilde{x}_h$.}
An analysis of errors and error estimators of
the DPG method can be conducted using the following assumption
introduced in~\cite{GopalQiu14}. In accordance with the traditions in
the theory \jrev{of} mixed methods~\cite{BrezzForti91}, we will call the
operator $\varPi$ in the assumption a {\em Fortin operator}.

\begin{assumption} \label{asm:pi} There is a  continuous linear operator
  $\varPi: Y \to Y_h$ such that for all
  $ {w_h}\in {X_h}$ and all $ v \in Y$,
  \[
    b(  w_h , v - {\varPi}  v) =0.
  \]
\end{assumption}
\begin{theorem}  \label{thm:dpg}
  Suppose Assumption~\ref{asm:pi} holds. Assume also that there is a
  positive constant $c_1$ such that
  \begin{equation}
    \label{eq:infsupbroken}
    c_1 \| x \|_{X} \le \sup_{y \in Y} \frac{ | b(x,y) |}{\| y\|_Y}
    \qquad \forall  x\in X,
  \end{equation}
  and the set
  $Z = \{ y \in Y: \; b(x,y)=0 \text{ for all } x \in X \}$ equals
  $\{0\}$. Then
    the DPG method~\eqref{eq:pdpg} is uniquely solvable for $x_h$ and
    the {\it a priori} error estimate 
    \begin{align}
      \label{eq:Apriori}
      \| x- x_h \|_X 
      & \le \frac{\| b \| \| \varPi\|}{c_1}
      \inf_{z_h\in X_h } \| x - z_h \|_X            
      && \text{(quasi-optimality)}
    \end{align}
    holds, where $x$ is the unique exact solution of~\eqref{eq:11}.
    Moreover, {we have the following inequalities for any $\tilde{x}_h$
    in $X_h$ and its corresponding error estimator $\tilde{\eta}$,
    with the data-approximation error
    $ \osc(\ell) = \| \ell \circ(1-\varPi) \|_{Y'}.$}
  \begin{subequations}\label{eq:estimator}
    \begin{align}
      \label{reliability}
      c_1 \| x- \tilde x_h  \|_X
      & \le 
         \|\varPi\|\,  \tilde\eta + \osc(\ell),
      && \text{(reliability)}
      \\
      \label{efficiency} 
      \tilde\eta 
      &  \le \| b \| \, \| x- \tilde x_h \|_X, 
      && \text{(efficiency)}
      \\
      \label{EfficiencyofDataApproximation}
      \osc(\ell)
      & \le \|b\|\, \|1- \varPi\|\,\min_{z_h\in X_h} \|x- z_h\|_X.
    \end{align}
\end{subequations}
\end{theorem}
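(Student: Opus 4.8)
The plan is to reduce everything to a discrete inf-sup condition obtained by pushing the continuous condition \eqref{eq:infsupbroken} through the Fortin operator, and then to run the standard \Babuska-type arguments. The first and central step is to prove the discrete inf-sup estimate
\[
  \frac{c_1}{\|\varPi\|}\,\|x_h\|_X \le \sup_{y_h \in Y_h} \frac{|b(x_h,y_h)|}{\|y_h\|_Y}
  \qquad \forall x_h \in X_h.
\]
To see this, fix $x_h \in X_h$. Assumption~\ref{asm:pi} gives $b(x_h, v) = b(x_h, \varPi v)$ for every $v \in Y$, and since $\|\varPi v\|_Y \le \|\varPi\|\,\|v\|_Y$, the continuous bound \eqref{eq:infsupbroken} applied to $x_h$ yields the claim after replacing $v$ by $\varPi v \in Y_h$ in the supremum. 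Recalling the definition \eqref{eq:14} of $T_h$ and the Riesz identity $\sup_{y_h \in Y_h} |b(x_h,y_h)|/\|y_h\|_Y = \|T_h x_h\|_Y$ (valid because $T_h x_h \in Y_h$), the discrete inf-sup reads $\tfrac{c_1}{\|\varPi\|}\|x_h\|_X \le \|T_h x_h\|_Y$. In particular $T_h$ is injective on $X_h$, so $\dim \Yhopt = \dim X_h$ and, since $T_h x_h \in \Yhopt$, the same constant controls $\sup_{y \in \Yhopt} |b(x_h,y)|/\|y\|_Y = \|T_h x_h\|_Y$. Thus \eqref{eq:pdpg} is a square system whose only homogeneous solution is $x_h = 0$, giving unique solvability.

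For quasi-optimality I would exploit Galerkin orthogonality. Subtracting \eqref{eq:pdpg} from \eqref{eq:11} restricted to $\Yhopt \subset Y$ gives $b(x - x_h, y) = 0$ for all $y \in \Yhopt$. Writing $P_h : X \to X_h$ for the linear idempotent solution map $x \mapsto x_h$, the discrete inf-sup over $\Yhopt$ together with $b(P_h x, y) = b(x,y)$ for $y \in \Yhopt$ gives $\tfrac{c_1}{\|\varPi\|}\|P_h x\|_X \le \sup_{y \in \Yhopt} |b(x,y)|/\|y\|_Y \le \|b\|\,\|x\|_X$, i.e. $\|P_h\| \le \|b\|\,\|\varPi\|/c_1$. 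Since $(I - P_h)z_h = 0$ for $z_h \in X_h$, we have $\|x - x_h\|_X = \|(I-P_h)(x - z_h)\|_X \le \|I - P_h\|\,\|x - z_h\|_X$; invoking the identity $\|I - P_h\| = \|P_h\|$, valid for any idempotent that is neither $0$ nor $I$, yields the sharp constant $\|b\|\|\varPi\|/c_1$ in \eqref{eq:Apriori}. This use of the sharp projection identity, rather than a naive triangle inequality (which would give the inferior constant $1 + \|b\|\|\varPi\|/c_1$), is what delivers the stated constant and is the one genuinely nonroutine ingredient of the whole proof.

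For reliability I would apply the continuous inf-sup directly to $x - \tilde x_h \in X$ and insert $b(x, \cdot) = \ell$, obtaining $c_1 \|x - \tilde x_h\|_X \le \sup_{y \in Y} |\ell(y) - b(\tilde x_h, y)|/\|y\|_Y$. The point is to split each test function as $y = \varPi y + (1 - \varPi) y$. On $\varPi y \in Y_h$ the residual equals $(\tilde\veps_h, \varPi y)_Y$ by \eqref{eq:23}, which is bounded by $\|\varPi\|\,\tilde\eta\,\|y\|_Y$; on $(1-\varPi)y$ the bilinear term drops out by Assumption~\ref{asm:pi} (as $\tilde x_h \in X_h$), leaving $\ell((1-\varPi)y)$, bounded by $\osc(\ell)\,\|y\|_Y$. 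Adding the two contributions gives \eqref{reliability}.

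Efficiency is immediate: by \eqref{eq:23} and $\ell = b(x,\cdot)$, $\tilde\eta = \|\tilde\veps_h\|_Y = \sup_{y \in Y_h} |b(x - \tilde x_h, y)|/\|y\|_Y \le \|b\|\,\|x - \tilde x_h\|_X$, which is \eqref{efficiency}. Finally, for \eqref{EfficiencyofDataApproximation} I would write $\ell((1-\varPi)y) = b(x,(1-\varPi)y) = b(x - z_h,(1-\varPi)y)$ for arbitrary $z_h \in X_h$, the last equality again by Assumption~\ref{asm:pi}; bounding by $\|b\|\,\|x - z_h\|_X\,\|1-\varPi\|\,\|y\|_Y$ and taking the supremum over $y$ and the infimum over $z_h$ gives the claim. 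Apart from the sharp-constant identity already flagged, every remaining estimate is a one-line consequence of the Fortin property together with the boundedness and inf-sup hypotheses.
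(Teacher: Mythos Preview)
Your proof is correct. The paper itself does not give a proof of this theorem; it merely cites \cite{GopalQiu14} for the a~priori estimate and \cite{CarstDemkoGopal14,CarstGalliHellw14} for the a~posteriori estimates, and your argument is precisely the standard one that appears in those references, including the use of the idempotent identity $\|I-P_h\|=\|P_h\|$ to obtain the sharp constant in \eqref{eq:Apriori}.
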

Here $\| \varPi\|$ and $\|b\|$ are any constants that satisfy
$ \| \varPi y \|_Y \le \| \varPi \| \| y\|_Y$ and
$| b(w,y) | \le \| b \| \| w \|_X \|y \|_Y$, respectively, for all
$w\in X$ and $y \in Y$.  To apply the theorem to specific examples of
DPG methods, we must verify~\eqref{eq:infsupbroken}. This will usually
be done by appealing to Theorem~\ref{thm:hybrid} and verifying
Assumptions~\ref{asm:A0} and~\ref{asm:hybrid}. The previous sections
provided tools for verifying Assumptions~\ref{asm:A0}
and~\ref{asm:hybrid}.  In the next section, we will provide some tools
to verify the remaining major condition in the theorem, namely
Assumption~\ref{asm:pi}. 

\begin{remark}
  A proof of Theorem~\ref{thm:dpg} is available in existing
  literature. The {\it a priori} error bound~\eqref{eq:Apriori} was
  proved in~\cite{GopalQiu14}. The inequalities
  of~\eqref{eq:estimator}, useful for {\it a posteriori} error
  estimation, were proved in~\cite{CarstDemkoGopal14}.  In particular,
  a reliability estimate slightly different from~\eqref{reliability}
  (with worse constants) was proved in~\cite{CarstDemkoGopal14}, but
  the same ideas yield~\eqref{reliability} easily (for example, cf.~\cite[proof of
  Lemma~3.6]{CarstGalliHellw14}).
\end{remark}

\begin{remark}
  The operator $T_h$ is an approximation to an idealized trial-to-test
  operator $T: X \to Y$ given by
\[
(Tx, y)_Y = b(x,y)
\qquad \forall y \in Y.
\]
If $B: X \to Y'$ is the operator defined by the form satisfying
$(Bx)(y) = b(x,y)$ for all $x \in X$ and $y \in Y$, then clearly
$T = R_Y^{-1}B$, where $R_Y : Y \to Y'$ is the Riesz map defined by
$(R_Y y)(v) = (y,v)_Y$. In some examples~\cite{DemkoGopal11a}, 
it is possible to
analytically compute $T$ and then one may substitute $\Yhopt$ with the
{\em exactly optimal test space}
$Y^\mathrm{opt} = T(X_h)$.
\end{remark}

\begin{remark}
  The above-mentioned trial-to-test operator $T=R_Y^{-1} B$ should not be
  confused with another trial-to-test operator $S = (B')^{-1} R_X$
  of~\cite{BarreMorto84} (also cf.~\cite{DemkoOden86a}):
  \[
  \begin{CD}
    X        @>B>>   Y'
    \\
    @V{R_X}VV   @AA{R_Y}A 
    \\
    X'        @<B'< <   Y
  \end{CD}
  \]
  Application of $S$ requires the inversion of the dual operator $B'$.
\end{remark}

\section{Fortin operators}  \label{sec:fortin}

The Fortin operator $\varPi$ appearing in Assumption~\ref{asm:pi} is
problem specific since it depends on the form~$b$ and the
spaces. However, there are a few Fortin operators that have proved
widely useful for analyzing DPG methods, including one for
$Y = H^1(\oh)$ and another for $Y = \Hdiv\oh$, both given
in~\cite{GopalQiu14}. In this section, we complete this collection by
adding another operator for $Y = \Hcurl \oh$ intimately connected to
the other two operators. Its utility will be clear in a subsequent
section.

Since the Fortin operators for DPG methods are to be defined on broken
Sobolev spaces, their construction can be done focusing solely on one
element. We will now assume that the mesh $\oh$ is a geometrically
conforming finite element mesh of tetrahedral elements.  Let $P_p(D)$
denote the set of polynomials of degree at most $p$ on a domain $D$
and let $ N_p(D) = P_{p-1}(D)^3 + {x} \times P_{p-1}(D)^3 $ denote the
\Nedelec~\cite{Nedel80} space.  For domains $D \subset \RRR^n$,
$n=2,3$, let $ R_p(D) = P_{p-1}(D)^n + {x} P_{p-1}(D)$ denote the
Raviart-Thomas~\cite{RaviaThoma77} space. We use
$\vpi_p$ to denote the $L^2$ orthogonal projection onto $P_p(K)$.
From now on, let us use $C$ to denote a generic constant independent
of $h_K = \diam K$. Its value at different occurrences may differ and
may possibly depend on the shape regularity of $K$ and the
polynomial degree~$p$.

\begin{theorem} \label{thm:Fortin} 
  On any tetrahedron $K$, there are operators
  \begin{align*}
    \pig & : H^1(K)    \to  P_{p+3}(K),   \\
    \pic & : \Hcurl K  \to   N_{p+3}(K), \\
    \pid & : \Hdiv K   \to  R_{p+3}(K), 
  \end{align*}
  such that the norm estimates 
  \begin{subequations} \label{eq:bdd}
  \begin{align}
    \label{eq:bdd-a}
    \|    \pig v \|_{H^1(K)} & \le C \| v \|_{H^1(K)}, 
    \\ \label{eq:bdd-b}
    \|    \pic F \|_{\Hcurl K} & \le C \| F \|_{\Hcurl K},
    \\ \label{eq:bdd-c}
    \|    \pid q \|_{\Hdiv K} & \le C \| q \|_{\Hdiv K}
  \end{align}    
  \end{subequations}
  hold, the diagram   
\[
\begin{CD}
0\! @> >> \!    H^1(K)/\RRR 
    @>\grad >> \Hcurl K
    @>\curl >> \Hdiv K
    @>\dive >>  L^2(K)
    @>>> 0 
\\
@. @VV{\pig}V	@ VV{\pic}V	@ VV{\pid}V	@ VV{\pil}V
\\
0\! @> >> \! P_{p+3}(K)/\RRR	
    @> \grad >> N_{p+3}(K)	
    @> \curl >> R_{p+3}(K)
    @> \dive >>P_{p+2}(K)
    @>>> 0
\end{CD}
\]
commutes, and these identities hold for any $v \in H^1(K)$,
$E \in \Hcurl K$, and $\tau \in \Hdiv K$:
\begin{subequations}
\begin{align}
  \label{eq:PiH1a}
  (q, \pig v - v)_{K}
  & 
    =0  
  && \forall\; q \in P_{p-1}(K),
  \\ \label{eq:PiH1b}
  \langle n \cdot \sigma, \pig v - v\rangle
  & = 0
  &&\forall\; n \cdot \sigma \in \trdK R_{p+1}( K),  
  \\ \label{eq:PiHcurla}
    (q, \pic E - E)_{K}
    & 
    =0  
    && \forall\; q \in P_{p}(K)^3,
    \\ \label{eq:PiHcurlb}
    \langle  (n \times F)\times n, n \times (\pic E - E)\rangle
    & = 0
    && \forall\; (n \times F)\times n \in \trpK P_{p+1}(K)^3,
  \\ \label{eq:PiHdiva}
    (q,\pid  \tau - \tau)_{K}
    & 
    =0  
    && \forall\; q\in P_{p+1}(K)^3,
  \\ \label{eq:PiHdivb}
    \langle n \cdot (\pid \tau - \tau), \mu\rangle
    & = 0
    && \forall\; \mu \in \trgK P_{p+2}(K).
\end{align}
\end{subequations}
\end{theorem}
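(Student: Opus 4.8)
The plan is to construct the three operators together and to let the de~Rham structure carry most of the load: the $L^2$-projection end of the complex is bounded for free, commutativity transfers the bounds on all derivative parts from one position of the complex to the next, and only the plain $L^2(K)$-bounds must be proved by hand. For each of $\pig,\pic,\pid$ I would start from a bounded, commuting, polynomial-preserving projection onto $P_{p+3}(K)$, $N_{p+3}(K)$, $R_{p+3}(K)$ (a smoothed or projection-based interpolant, with $h_K$-uniform bounds after the scaling discussed below), and then add an interior-bubble correction that enforces the moment identities \eqref{eq:PiH1a}--\eqref{eq:PiHdivb}. The gap of three between the target degree $p+3$ and the orders appearing in the interior conditions is exactly the slack that lets these corrections live in bubble spaces invisible both to the prescribed low-order moments and to the boundary traces, so that meeting the moment conditions spoils neither the commuting diagram nor the bounds established for the base projection.

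The trivial end is the $L^2(K)$-orthogonal projection $\pil$, for which $\|\pil q\|_{L^2(K)}\le\|q\|_{L^2(K)}$. Granting the commuting diagram, the derivative halves of \eqref{eq:bdd} then come at once: from $\dive\pid\tau=\pil\dive\tau$ one gets $\|\dive\pid\tau\|_{L^2(K)}\le\|\dive\tau\|_{L^2(K)}$; from $\curl\pic E=\pid\curl E$, together with the fact that $\curl E$ is divergence free so that its $\Hdiv{K}$-norm equals its $L^2$-norm, one gets $\|\curl\pic E\|_{L^2(K)}\le C\|\curl E\|_{L^2(K)}$ as soon as $\pid$ is $L^2$-bounded; and $\|\grad\pig v\|_{L^2(K)}$ is controlled through $\grad\pig v=\pic\grad v$ once $\pic$ is $L^2$-bounded. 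Thus everything reduces to the three $L^2(K)$-bounds for $\pig v$, $\pic E$, and $\pid\tau$ by the respective graph norms, uniformly in $h_K$.

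For the commuting diagram and the moment conditions I would check that the defining functionals are compatible along the complex. A routine element-by-element integration by parts rewrites a boundary moment such as $\langle n\cdot\sigma,w\rangle$ with $w\in P_{p+2}(K)$ as $(\dive\sigma,w)_K+(\sigma,\grad w)_K$, so the trace conditions of one space feed precisely into the interior conditions of its neighbours. Combined with the exactness of the bottom polynomial row and a dimension count on the reference tetrahedron that lets the bubble corrections be chosen to meet the moment conditions, this yields $\grad\pig=\pic\grad$ and $\curl\pic=\pid\curl$ on the relevant ranges, and the identities \eqref{eq:PiH1a}--\eqref{eq:PiHdivb} hold by the construction of those corrections.

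The main obstacle is the $h_K$-uniformity of the three $L^2(K)$-bounds, and it is genuinely a scaling obstruction: under dilation to a unit-diameter element the $L^2$ and the derivative contributions of the $\Hcurl{K}$ and $\Hdiv{K}$ norms scale with different powers of $h_K$, and a correction bounded naively would cost an inverse power of $h_K$ through an inverse inequality. To keep the two scales apart I would decompose the field with a scaled Hodge decomposition read off from the exact sequence, equivalently a regularized \Poincare\ homotopy operator on the star-shaped element: split $\tau$ into a $\curl$ (or $\grad$) of a potential plus a complementary piece, estimate the potential through the operator one step down the complex, and bound the correction by the $h_K$-uniform stability of the small local problem it solves, recombining only after each piece sits on its correct scale. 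Producing a single family of corrections that is at once moment-exact, commuting, and scale-robust is the delicate point; the rest is bookkeeping.
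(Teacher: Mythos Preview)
Your plan has a real gap at the step ``base projection plus interior-bubble correction.'' Half of the moment identities you must enforce are \emph{boundary} moments, namely \eqref{eq:PiH1b}, \eqref{eq:PiHcurlb}, \eqref{eq:PiHdivb}. A correction that is ``invisible to the boundary traces'' cannot touch those; and a generic bounded commuting projection (smoothed or projection-based) has no reason to satisfy, say, $\langle n\cdot\sigma,\Pi v-v\rangle=0$ for every $n\cdot\sigma\in\trdK R_{p+1}(K)$. The degree gap of three gives you enough interior bubbles to fix the volume moments \eqref{eq:PiH1a}, \eqref{eq:PiHcurla}, \eqref{eq:PiHdiva}, but not the face moments, and those face moments are precisely what the DPG analysis needs. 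A second, related issue: even if you could correct all moments, adding a correction to each operator separately will generally destroy the commuting diagram you started with; you would need $\grad(\text{correction for }\pig)$ to equal the correction for $\pic$ applied to $\grad v$, and nothing in your outline forces this.

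The paper takes a different route that sidesteps both problems. It does not start from an existing projection at all. Instead it identifies, for each space, a subspace $\Bg(K)\subset P_{p+3}(K)$, $\Bc(K)\subset N_{p+3}(K)$, $\Bd(K)\subset R_{p+3}(K)$ obtained by setting to zero exactly the canonical degrees of freedom that are unbounded on the Sobolev spaces in question (vertex and edge values, certain boundary moments). On each such subspace the remaining moment conditions --- both the interior and the boundary ones in \eqref{eq:PiH1a}--\eqref{eq:PiHdivb} --- form a square unisolvent system, proved by a dimension count and a nontrivial uniqueness lemma (Lemma~\ref{lem:uniq} for the curl case). Commutativity is then proved by showing that, e.g., $\grad\pig v-\pic\grad v$ lies in $\Bc(K)$ and is annihilated by all the defining moments, hence vanishes. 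Your instinct to use commutativity to upgrade the derivative parts of the norm bounds is exactly right and is what the paper does for \eqref{eq:bdd-b}: a direct scaling argument gives only the weighted bound $h_K^{-2}\|\pic E\|_{L^2(K)}^2+\|\curl\pic E\|_{L^2(K)}^2\le C\,\ncrl{E}{K}^2$, which controls $\|\pic E\|_{L^2(K)}$ by $\|E\|_{\Hcurl K}$ but not $\|\curl\pic E\|_{L^2(K)}$; the latter comes from $\curl\pic E=\pid\curl E$ and the already established \eqref{eq:bdd-c}. No Hodge decomposition or Poincar\'e homotopy is needed.
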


\jrev{Note that the duality pairings above must be taken in the
  appropriate spaces, as in~\eqref{eq:sup}.}
To provide a constructive proof of Theorem~\ref{thm:Fortin}, we will
exhibit Fortin operators. We will use the exact sequence properties of
the finite element spaces appearing as codomains of the operators in
the theorem. We cannot use the canonical interpolation operators in
these finite element spaces because they do not
satisfy~\eqref{eq:bdd}. Hence we will restrict the codomains of our
operators to the following subspaces whose construction is motivated
by zeroing out the unbounded degrees of freedom.
\begin{align*}
  \Bg(K)
  & =\{v\in P_{p+3}(K)
   :   \; v \text{ vanishes on all edges and vertices of } K  
  \},
  \\
  \Bc(K)
  & =\bigg\{
    E\in N_{p+3}(K)
   :   \;
  t \cdot E =0
  \text{ on all edges of } K,
  \\
  & \hspace{3.5cm}   \int_{\d K}\!\! \phi \,n \cdot \curl E =0
    \quad\forall \phi \in \Pbod \text{ and }
  \\
  & \hspace{3.5cm} 
    \int_{\d K} ( (n \times E) \times n )\cdot r = 0 
    \quad \forall r \in R_1(\d K)
  \bigg\},
  \\
  \Bd(K)
  & =\bigg\{ \tau \in R_{p+3}(K)
   :   \; \int_{\d K} \phi\,
  n \cdot\tau = 0
  \quad
  \forall \phi \in \Pbd\bigg\}.
\end{align*}
Here $t$ denotes a tangent vector along the underlying edge,
$R_1(\d K) = \{ r : r|_f \in R_1(f)$ for all faces $f$ of $\d K\}$,
and $\Pbod$ and $\Pbd$ are defined as follows. To simplify notation,
let $P_p(\d K) = \trdK R_{p+1}(K)$ (the space of functions on $\d K$
that are polynomials of degree at most $p$ on each face of $\d K$) and
let $\Pc_p(\d K) = \trgK P_p(K)$.  Let $\Pbo_p(\d K)$ denote the
$L^2(\d K)$-orthogonal complement of $\Pc_p(\d K) +P_0(\d K)$ in
$P_p(\d K)$ and let $\Pb_p(\d K)$ denote the $L^2(\d K)$-orthogonal
complement of $\Pc_p(\d K)$ in $P_p(\d K)$.  The following result is
proved in~\cite[Lemma~3.2]{GopalQiu14}.

\begin{lemma}
  \label{lem:grad}
  For any $v \in H^1(K)$, there is a unique $\vog$ in $\Bg(K)$
  satisfying
  \begin{subequations} \label{eq:pi0}
  \begin{gather}
    \label{eq:pi0a}
    (q, \vog - v)_{K}
    =0  
    \quad\text{ for all } q \in P_{p-1}(K),
    \\  \label{eq:pi0b}
    \langle n \cdot \sigma, \vog - v\rangle
    = 0
    \quad\text{ for all } n \cdot \sigma \in \trdK R_{p+1}(K), 
    \text{ and }
    \\ \label{eq:pi0c}
    \| \vog \|_{L^2(K)} + h_K \| \grad \vog\|_{L^2(K)}
   \le C\left( \| v \|_{L^2(K)} + h_K \| \grad v\|_{L^2(K)}\right).
  \end{gather}
\end{subequations}
\end{lemma}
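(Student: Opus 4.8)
The plan is to read \eqref{eq:pi0a}--\eqref{eq:pi0b} as a square linear system on the finite-dimensional space $\Bg(K)$, prove it is uniquely solvable, and then obtain the stability bound \eqref{eq:pi0c} by scaling to a reference tetrahedron. First I would introduce the moment map
\[
  L : \Bg(K) \to P_{p-1}(K)' \times P_p(\d K)', \qquad
  L w = \big( (\,\cdot\,, w)_K,\ \langle \,\cdot\,, w\rangle \big),
\]
where $P_p(\d K) = \trdK R_{p+1}(K)$ as above. Conditions \eqref{eq:pi0a}--\eqref{eq:pi0b} ask exactly for the unique $\vog \in \Bg(K)$ with $L\vog = Lv$ (the data being the corresponding moments of $v$), so existence and uniqueness reduce to bijectivity of $L$. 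Domain and codomain have equal dimension: discarding the vertex and edge Lagrange degrees of freedom of $P_{p+3}(K)$ leaves precisely the face-interior and element-interior nodal functionals, so $\dim \Bg(K) = \dim P_p(\d K) + \dim P_{p-1}(K)$.

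Next I would prove $L$ is injective, which with the dimension count yields bijectivity. Suppose $Lw = 0$. Since $w$ vanishes on every edge of $K$, its trace on each face $f$ is a face bubble $w|_f = \lambda_i\lambda_j\lambda_k\, q_f$ with $q_f \in P_p(f)$; choosing the element of $P_p(\d K)$ that equals $q_f$ on $f$ and vanishes on the other faces gives $\int_f \lambda_i\lambda_j\lambda_k\, q_f^2 = 0$, hence $q_f = 0$. Thus $w|_{\d K} = 0$, so $w = \lambda_1\lambda_2\lambda_3\lambda_4\, r$ is an interior bubble with $r \in P_{p-1}(K)$, and testing \eqref{eq:pi0a} with $q = r$ forces $r = 0$, i.e.\ $w = 0$. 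This settles the existence and uniqueness assertions of the lemma.

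For \eqref{eq:pi0c} I would transform to a fixed reference tetrahedron $\hat K$ through the affine map $F_K$. The three spaces $\Bg$, $P_{p-1}$, and the face-wise space $P_p(\d K)$ are invariant under affine pullback, and the constant volume- and surface-Jacobian factors do not change the test spaces, so the defining conditions on $K$ become the analogous conditions on $\hat K$. Writing $\hat v = v\circ F_K$ and $\hat w = \vog\circ F_K$, this identifies $\hat w$ as $L^{-1}$ (on $\hat K$) applied to the moments of $\hat v$. On $\hat K$ these moments are bounded linear functionals of $\hat v$ --- the interior ones by $\|\hat v\|_{L^2(\hat K)}$ and the boundary ones by $\|\hat v\|_{H^1(\hat K)}$ via the trace theorem --- and $L^{-1}$ is bounded since everything is finite-dimensional, whence $\|\hat w\|_{H^1(\hat K)} \le C\|\hat v\|_{H^1(\hat K)}$. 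Scaling back with the shape-regular estimates $\|z\|_{L^2(K)} \sim h_K^{3/2}\|\hat z\|_{L^2(\hat K)}$ and $|z|_{H^1(K)} \sim h_K^{1/2}|\hat z|_{H^1(\hat K)}$, under which $\|\cdot\|_{L^2(K)} + h_K|\cdot|_{H^1(K)}$ is comparable to $h_K^{3/2}\|\cdot\|_{H^1(\hat K)}$, then delivers \eqref{eq:pi0c}.

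The step I expect to be the crux is the bijectivity of $L$: one must match the boundary test space $P_p(\d K)$ exactly to the face-bubble degrees of freedom so that the dimension count is sharp, and verify that the interior and boundary conditions decouple triangularly (the face traces are pinned down first, then the interior bubble). Once that is in place the scaling argument is routine, the only care being to record the affine invariance of $\Bg$, $P_{p-1}$, and $P_p(\d K)$ and to track the Jacobian weights.
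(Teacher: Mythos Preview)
The paper does not actually prove this lemma: it simply cites \cite[Lemma~3.2]{GopalQiu14} and moves on. Your proposal supplies a complete self-contained argument, and it is correct. The dimension count $\dim \Bg(K) = 4\dim P_p(f) + \dim P_{p-1}(K)$ matches the number of moment conditions, the triangular injectivity argument (kill face bubbles first via the boundary moments, then the interior bubble via the volume moments) is exactly the right structure, and the scaling argument for~\eqref{eq:pi0c} is standard and correctly executed. This is in fact essentially the argument given in the cited reference, so there is no substantive difference in approach---you have reconstructed the omitted proof.
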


We define $\pig$ as a minor modification of the analogous operator
in~\cite{GopalQiu14}. Given any $v \in H^1(K)$, first compute its
mean value on the boundary
\[
m_{\d K}(v) = \frac{1}{| \d K|} \int_{\d K} v,
\]
then split $v = v_0 + m_{\d K}(v)$ where $v_0 = v - m_{\d K}( v)$ has
zero mean trace, and finally define
\begin{equation}
  \label{eq:pigradbdr}
  \pig v =  \pigo v_0 +  m_{\d K}(v).
\end{equation}

\begin{lemma} \label{lem:PiGrad} %
  $\pig v$ satisfies~\eqref{eq:PiH1a}--\eqref{eq:PiH1b}
  and~\eqref{eq:bdd-a} for any $v \in H^1(K)$
\end{lemma}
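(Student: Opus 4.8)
The plan is to reduce everything to the properties of the operator $\pigo$ already recorded in Lemma~\ref{lem:grad}, exploiting that the constant $m_{\d K}(v)$ introduced in the definition~\eqref{eq:pigradbdr} is annihilated by both the gradient and the error $\pig v - v$. First I would record the key algebraic identity: since $v = v_0 + m_{\d K}(v)$ and $\pig v = \pigo v_0 + m_{\d K}(v)$, the constants cancel to give
\[
\pig v - v = \pigo v_0 - v_0 .
\]
The two moment conditions are then immediate. Applying~\eqref{eq:pi0a} of Lemma~\ref{lem:grad} to $v_0$ gives $(q, \pigo v_0 - v_0)_K = 0$ for all $q \in P_{p-1}(K)$, which is exactly~\eqref{eq:PiH1a}; applying~\eqref{eq:pi0b} to $v_0$ gives $\langle n\cdot\sigma, \pigo v_0 - v_0\rangle = 0$ for all $n\cdot\sigma \in \trdK R_{p+1}(K)$, which is exactly~\eqref{eq:PiH1b}.

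The substance of the lemma is the bound~\eqref{eq:bdd-a}, and I would treat the gradient and the zeroth-order parts separately. Because $m_{\d K}(v)$ is constant, $\grad \pig v = \grad \pigo v_0$, so the gradient is controlled by $\pigo v_0$ alone. The crucial auxiliary fact is that $v_0$ has vanishing mean trace on $\d K$ by construction, so a scaled \Poincare-type inequality for such functions yields
\[
\| v_0 \|_{L^2(K)} \le C h_K \| \grad v_0\|_{L^2(K)} = C h_K \| \grad v\|_{L^2(K)},
\]
where I also used $\grad v_0 = \grad v$. Feeding this into the scaled stability estimate~\eqref{eq:pi0c} of Lemma~\ref{lem:grad} gives
\[
\| \pigo v_0\|_{L^2(K)} + h_K \| \grad \pigo v_0\|_{L^2(K)} \le C h_K \| \grad v\|_{L^2(K)},
\]
so that $\| \grad \pig v\|_{L^2(K)} = \| \grad \pigo v_0\|_{L^2(K)} \le C\|\grad v\|_{L^2(K)}$ and $\|\pigo v_0\|_{L^2(K)} \le C h_K \|\grad v\|_{L^2(K)}$, both with $h_K$-independent constants.

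It remains to bound the $L^2(K)$-norm of the constant $m_{\d K}(v)$, which is the only term that forces the full $H^1(K)$-norm of $v$ to appear. I would estimate $|m_{\d K}(v)| \le |\d K|^{-1/2}\|v\|_{L^2(\d K)}$ by Cauchy--Schwarz, use the shape-regularity relations $|K| \le C h_K^3$ and $|\d K| \ge c\, h_K^2$, and invoke the scaled trace inequality $\|v\|_{L^2(\d K)} \le C(h_K^{-1/2}\|v\|_{L^2(K)} + h_K^{1/2}\|\grad v\|_{L^2(K)})$ to obtain
\[
\|m_{\d K}(v)\|_{L^2(K)} = |m_{\d K}(v)|\,|K|^{1/2} \le C\big(\|v\|_{L^2(K)} + h_K\|\grad v\|_{L^2(K)}\big).
\]
Combining this with the bounds on $\pigo v_0$ through the triangle inequality gives $\|\pig v\|_{L^2(K)} \le C\|v\|_{H^1(K)}$, which together with the gradient bound proves~\eqref{eq:bdd-a}. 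I expect the main obstacle to be precisely this last estimate: carefully tracking the powers of $h_K$ produced by the \Poincare\ and trace inequalities against the measures of $K$ and $\d K$, so that the resulting constant is genuinely independent of $h_K$ (as it is, once one notes $h_K \le \diam\om$). The moment conditions, by contrast, are essentially free once the cancellation of $m_{\d K}(v)$ is observed.
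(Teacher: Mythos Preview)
Your proof is correct and follows essentially the same approach as the paper: both use the cancellation $\pig v - v = \pigo v_0 - v_0$ to obtain the moment conditions immediately, and both combine the scaled \Poincare-type inequality $\|v - m_{\d K}(v)\|_{L^2(K)} \le C h_K \|\grad v\|_{L^2(K)}$, the scaled trace bound on $\|m_{\d K}(v)\|_{L^2(K)}$, and the stability~\eqref{eq:pi0c} of $\pigo$ to obtain~\eqref{eq:bdd-a}. The only difference is cosmetic ordering: you invoke the \Poincare\ inequality first to bound $\|v_0\|_{L^2(K)}$ and then feed it into~\eqref{eq:pi0c}, whereas the paper applies~\eqref{eq:pi0c} first and then controls the resulting $\|v_0\|_{L^2(K)}$ term.
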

\begin{proof}
  Since $\pig v -v = \pigo v_0 - v_0$, equations~\eqref{eq:pi0a} and
  \eqref{eq:pi0b} immediately yield~\eqref{eq:PiH1a} and
  \eqref{eq:PiH1b}. To prove the norm estimate~\eqref{eq:bdd-a}, note
  that standard scaling arguments imply 
  \begin{gather}
    \label{eq:141}
    \| m_{\d K} (v)\|_{L^2(K)}^2
     \le \| v \|^2 _{L^2(\d K)}\frac{ |K|}{  | \d  K| }
     \le C ( \| v \|_{L^2(K)}^2 + h_K^2 \| \grad v \|_{L^2(K)}^2),
    \\ \label{eq:15}
    \| v - m_{\d K}(v) \|_{L^2(K)}
     \le C h_K \| \grad v \|_{L^2(K)}.
  \end{gather}
  for all $v$ in $H^1(K)$.
  Combining~\eqref{eq:141} and~\eqref{eq:pi0c}, we get 
  \begin{align*}
    \| \pig v \|_{L^2(K)}
    & \le
      C\left( \| v \|_{L^2(K)} + h_K \| \grad v\|_{L^2(K)}\right),
  \end{align*}
  while combining~\eqref{eq:15} and \eqref{eq:pi0c}, 
  \begin{align*}
    h_K & \| \grad (\pig v) \|_{L^2(K)}
          = h_K \| \grad \pigo (v - m_{\d K} (v))  \|_{L^2(K)}, 
    && \text{by~\eqref{eq:pigradbdr}}
    \\
    & \le  C\left( \| v - m_{\d K} (v)\|_{L^2(K)} 
      + h_K \| \grad (v - m_{\d K} (v))\|_{L^2(K)}\right)
    && \text{by~\eqref{eq:pi0c}}
      \\
    & \le C h_K \| \grad v \|_{L^2(K)}
    && \text{by~\eqref{eq:15}}.
  \end{align*}
  These estimates together prove~\eqref{eq:bdd-a}.
\end{proof}

The next lemma is proved in~\cite[Lemma~3.3]{GopalQiu14}. It defines
$\pid$ exactly as in~\cite{GopalQiu14}.

\begin{lemma} \label{lem:div} %
  Any $\sigma \in \Bd(K)$ satisfying
    \begin{subequations}
      \begin{align}
        \label{eq:Hdiva}
        (q, \sigma)_{K}
        & 
          =0  
        && \forall\; q\in P_{p+1}(K)^3,
        \\ \label{eq:Hdivb}
        \langle n \cdot \sigma, \mu\rangle
        & = 0
        && \forall\; \mu \in \trgK P_{p+2}(K),
    \end{align}
  \end{subequations}
  vanishes. Moreover, for any $\tau \in \Hdiv K$, there is a unique
  function $\pid \tau$ in $\Bd(K)$
  satisfying~\eqref{eq:PiHdiva}--\eqref{eq:PiHdivb}. It also
  satisfies~\eqref{eq:bdd-c}.
\end{lemma}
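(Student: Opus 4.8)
The plan is to argue by a Raviart--Thomas unisolvence (dimension-counting) argument for the algebraic statements and by a scaling estimate for the norm bound. Throughout, recall that a function $\sigma \in R_{p+3}(K)$ carries the standard Raviart--Thomas degrees of freedom: the normal-trace moments $\langle n\cdot\sigma, \mu\rangle$ with $\mu$ ranging over $P_{p+2}(\d K)$, and the interior moments $(\sigma,q)_K$ with $q \in P_{p+1}(K)^3$; these functionals are unisolvent on $R_{p+3}(K)$, and moreover $\dive R_{p+3}(K) = P_{p+2}(K)$ and the normal trace maps $R_{p+3}(K)$ onto $P_{p+2}(\d K)$.

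First I would prove the vanishing assertion. Since $\trgK P_{p+2}(K) = \Pc_{p+2}(\d K)$, condition~\eqref{eq:Hdivb} says exactly that the $\Pc_{p+2}(\d K)$-part of the normal-trace moments of $\sigma$ vanishes, while membership $\sigma \in \Bd(K)$ forces the $\Pbd$-part to vanish. Because $P_{p+2}(\d K) = \Pc_{p+2}(\d K) \oplus \Pbd$ (as $\Pbd$ is by definition the $L^2(\d K)$-orthogonal complement of $\Pc_{p+2}(\d K)$), together these annihilate the entire normal-trace moment, so $n\cdot\sigma \equiv 0$ on $\d K$. Condition~\eqref{eq:Hdiva} kills all interior moments. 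With every Raviart--Thomas degree of freedom equal to zero, unisolvence gives $\sigma = 0$.

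Next, for existence and uniqueness of $\pid\tau$, I would run a dimension count. Since $\Bd(K)$ is cut out of $R_{p+3}(K)$ by the $\dim\Pbd$ independent constraints against $\Pbd$ (independent because the normal trace is surjective), one has $\dim\Bd(K) = \dim R_{p+3}(K) - \dim\Pbd = \dim P_{p+1}(K)^3 + \dim\Pc_{p+2}(\d K)$, which is precisely the number of moment conditions imposed in~\eqref{eq:PiHdiva}--\eqref{eq:PiHdivb}. The linear map from $\Bd(K)$ to the product of the corresponding moment spaces has trivial kernel by the vanishing assertion just proved (its kernel consists of the $\sigma \in \Bd(K)$ obeying~\eqref{eq:Hdiva}--\eqref{eq:Hdivb}), hence is a bijection; matching the moments of a given $\tau$ then determines a unique $\pid\tau \in \Bd(K)$.

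It remains to prove the bound~\eqref{eq:bdd-c}, which I expect to be the main obstacle because of the required $h_K$-uniformity and the $H^{-1/2}$ boundary pairing. I would split it into the divergence and the $L^2$ parts. For the divergence, integrating by parts against any $v \in P_{p+2}(K)$ and using $\grad v \in P_{p+1}(K)^3$ in~\eqref{eq:PiHdiva} together with $v|_{\d K} \in \Pc_{p+2}(\d K)$ in~\eqref{eq:PiHdivb} gives $(\dive\pid\tau, v)_K = (\dive\tau, v)_K$; since $\dive\pid\tau \in P_{p+2}(K)$, this shows $\dive\pid\tau = \pil\dive\tau$ and hence $\|\dive\pid\tau\|_{L^2(K)} \le \|\dive\tau\|_{L^2(K)}$. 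For the $L^2$ part I would transform to a fixed reference tetrahedron via the contravariant (Piola) map, on which $\pid$ is a fixed bounded operator between finite-dimensional spaces, and then track the powers of $h_K$ in the scaling. The only delicate degrees of freedom are the boundary moments $\langle n\cdot\tau, \mu\rangle$: choosing a polynomial extension $\tilde\mu \in P_{p+2}(K)$ with $\trgK\tilde\mu = \mu$, I would rewrite $\langle n\cdot\tau, \mu\rangle = (\dive\tau, \tilde\mu)_K + (\tau, \grad\tilde\mu)_K$, which is controlled by $\|\tau\|_{\Hdiv K}$ with the correct scaling, whereas the interior moments equal $(\tau,q)_K$ and are bounded immediately. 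Assembling these estimates under shape-regularity yields~\eqref{eq:bdd-c}.
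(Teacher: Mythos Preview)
Your proof is correct. The paper does not give an inline proof of this lemma but refers to~\cite[Lemma~3.3]{GopalQiu14}; your argument---Raviart--Thomas unisolvence combined with the decomposition $P_{p+2}(\d K) = \Pc_{p+2}(\d K) \oplus \Pbd$ for the vanishing assertion, a dimension count for unique solvability, and the commutativity $\dive\pid\tau = \pil\dive\tau$ together with a Piola scaling for the norm bound---is precisely the standard approach taken in that reference.
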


The remaining operator~$\pic$ will be defined after the next result.
It is modeled after the previous two lemmas, but requires considerably
more work.

\begin{lemma}  \label{lem:uniq}
  Any $E \in \Bc(K)$ satisfying 
\begin{subequations}     \label{eq:PiHcurltmp}
  \begin{align}
    \label{eq:PiHcurltmp1}
    (\phi,E)_K
    & 
    =0  
    && \forall \phi \in P_{p}(K)^3
    \\ \label{eq:PiHcurltmp2}
    \langle \mu, n \times  E\rangle
    & = 0
    && \forall \mu\in \trpK P_{p+1}(K)^3,
  \end{align}
\end{subequations}
vanishes.
\end{lemma}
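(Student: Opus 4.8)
The plan is to reduce everything to showing that the tangential trace $n \times E$ vanishes on $\d K$. Once that is known, $E$ is an element of $N_{p+3}(K)$ whose edge moments (against $P_{p+2}(e)$) and face moments (against $P_{p+1}(f)^2$) all vanish, since these are moments of the now-zero tangential trace, while the interior moments against $P_p(K)^3$ vanish by hypothesis~\eqref{eq:PiHcurltmp1}. Because edge, face, and interior moments of these degrees form the classical unisolvent set of degrees of freedom for the first-kind \Nedelec\ space $N_{p+3}(K)$~\cite{Nedel80}, this forces $E = 0$. So the entire content of the lemma lies in the vanishing of the tangential trace, and this is the step that uses the definition of $\Bc(K)$.

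It is worth noting why this is not immediate. Condition~\eqref{eq:PiHcurltmp2} supplies moments of $n \times E$ only against traces $\trpK G$ of \emph{global} polynomials $G \in P_{p+1}(K)^3$; such traces are continuous across the edges of $\d K$ and therefore do not span the full, face-by-face independent tangential moment space that the classical face degrees of freedom require. The information missing from~\eqref{eq:PiHcurltmp2} is exactly what the two remaining constraints defining $\Bc(K)$—the surface-curl moments against $\Pbod$ and the lowest-order moments against $R_1(\d K)$—are designed to provide.

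To prove $n \times E = 0$ I would work face by face with $E_t = (n \times E) \times n$ and proceed in two steps. First, show $n \cdot \curl E = 0$ on $\d K$. Using the identity $n \cdot \curl E|_f = \mathrm{curl}_f E_t$ and Stokes' theorem, the edge condition $t \cdot E = 0$ makes $\mathrm{curl}_f E_t$ have zero mean on each face, hence orthogonal to $P_0(\d K)$; it is orthogonal to $\Pbod$ by the surface-curl constraint in $\Bc(K)$; and it is orthogonal to $\Pc_{p+2}(\d K) = \trgK P_{p+2}(K)$ because, for $\phi = \rho|_{\d K}$ with $\rho \in P_{p+2}(K)$, integration by parts on each face together with $t \cdot E = 0$ turns $\int_{\d K} \phi \, n \cdot \curl E$ into $\pm\langle \trpK \grad \rho, n \times E\rangle$, which vanishes by~\eqref{eq:PiHcurltmp2} since $\grad \rho \in P_{p+1}(K)^3$. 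Since $\Pc_{p+2}(\d K)$, the constants, and $\Pbod$ together span $P_{p+2}(\d K)$, and $n \cdot \curl E$ lies in that space, we conclude $n \cdot \curl E = 0$. Second, with $\mathrm{curl}_f E_t = 0$ on each face, two-dimensional exactness writes $E_t$ as a surface gradient; the edge condition fixes its boundary values, and the remaining tangential moments in~\eqref{eq:PiHcurltmp2} together with the $R_1(\d K)$ moments annihilate this surface potential, giving $E_t = 0$ and hence $n \times E = 0$.

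The main obstacle is this last surface argument: verifying that, once the edge data has been trivialized by $t \cdot E = 0$, the surface-curl moments, the continuous-trace moments in~\eqref{eq:PiHcurltmp2}, and the lowest-order $R_1(\d K)$ moments form a unisolvent system for the tangential trace on the two-dimensional surface $\d K$, with the four faces coupled through their shared edges. Getting the dimension count right—matching $\Pbod$, the degree $p+1$ in~\eqref{eq:PiHcurltmp2}, and $R_1(\d K)$ to the surface potential and its residual constant modes—is the delicate part; everything else reduces, as above, to classical \Nedelec\ unisolvence in the interior.
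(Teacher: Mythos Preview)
Your overall plan is sound and largely tracks the paper's proof. The argument that $n\cdot\curl E=0$ on $\d K$ is exactly the paper's first step, obtained the same way (combine~\eqref{eq:PiHcurltmp2} with $\grad\rho$, the edge condition via Stokes, and the $\Pbod$ constraint to kill all of $P_{p+2}(\d K)$). Your final step, however, is cleaner than the paper's: once $n\times E=0$, you invoke \Nedelec\ unisolvence directly (edge, face, and the volume moments~\eqref{eq:PiHcurltmp1} all vanish), whereas the paper takes a longer detour, first showing $\curl E\in\Bd(K)$ and applying Lemma~\ref{lem:div} to get $\curl E=0$, then writing $E=\grad(b_K v_{p-1})$ and using surjectivity of $\dive:P_p(K)^3\to P_{p-1}(K)$. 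Your route avoids this and is preferable.

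The place where your sketch is genuinely incomplete is the one you flag: passing from $n\cdot\curl E=0$ to $n\times E=0$. The paper's execution here is concrete and worth knowing. Since the surface curl of $E_\tp$ vanishes on the simply connected manifold $\d K$, exactness of the boundary complex gives $E_\tp=\gradt v$ for a \emph{globally continuous} $v\in\Pc_{p+3}(\d K)$; the edge condition $t\cdot E=0$ then forces $v|_f = b_f v_p$ with $v_p\in P_p(\d K)$, where $b_f$ is the face bubble. Now integrate by parts on each face: for $\mu\in\trpK P_{p+1}(K)^3$ and $\kappa\in\RRR$,
\[
0=\ip{n\times E,\mu}+\kappa\int_{\d K}E_\tp\cdot x_\tp
=-\sum_f\int_f b_f v_p\,\divt(\mu\times n+\kappa x_\tp).
\]
Because $\mu\times n$ ranges over the conforming surface Raviart--Thomas space $R_{p+1}^c(\d K)$ and $\divt:R_{p+1}^c(\d K)\to\{w\in P_p(\d K):\int_{\d K}w=0\}$ is onto, the term $\divt(\mu\times n+\kappa x_\tp)$ sweeps all of $P_p(\d K)$; choosing it equal to $v_p$ gives $v_p=0$. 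This is the dimension-count you were worried about, packaged as a surjectivity statement, and the $R_1(\d K)$ constraint in $\Bc(K)$ is used (via the equivalence~\eqref{eq:17}) precisely to supply the single $x_\tp$ moment that fills in the missing constant.
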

\begin{proof}
  Integrating by parts twice and using~\eqref{eq:PiHcurltmp2}, we have 
  \begin{equation}
    \label{eq:66}
    \int_{\d K} \psi \, n \cdot \curl E =
  (\curl E, \grad \psi)_K = \ip{n \times E, \grad \psi} =0
  \qquad \forall \psi \in P_{p+2}(K).
  \end{equation}
  In addition, by Stokes theorem applied to one face $f$ of $K$, we
  have
  \begin{equation}
    \label{eq:77}
    \int_f \kappa\, n \cdot \curl E = 
    \int_{ \d f} \kappa  \,E \cdot t = 0
    \qquad \forall \kappa \in P_0(\d K),
  \end{equation}
  since $E\cdot t =0$ on all edges by the definition of $\Bc(K)$. The
  definition of $\Bc(K)$ also gives 
  \begin{equation}
    \label{eq:88}
    \int_{\d K} \phi \, n \cdot \curl E =0 \qquad \forall \phi \in \Pbod.
  \end{equation}
  Since $ P_{p+2}(\d K) = \Pbod + P^c_{p+2}(\d K) + P_0(\d K), $
  equations~\eqref{eq:66},\eqref{eq:77} and \eqref{eq:88} together imply
  \begin{equation}
    \label{eq:ncurl}
    \int_{\d K} \psi \,n\cdot \curl E = 0 \qquad
    \forall \psi \in P_{p+2}( \d K).
  \end{equation}
  Since $n \cdot \curl E \in P_{p+2}(\d K),$ we thus find that
  $n \cdot \curl E =0$ on $\d K.$ This implies that the tangential
  component of $E$ on $\d K$, namely
  $E_{\tp} = (n \times E) \times n$, 
  has vanishing surface curl, so it must equal a surface gradient,
  i.e., $E_{\tp}= \gradt v$ for some $v \in \Pc_{p+3}(\d K)$.  Moreover,
  since $E_{\tp}$ vanishes on all edges, $v$ may be chosen to be of the
  form $v = b_f v_p$ for some $v_p \in P_p(\d K)$, where $b_f$ is the
  product of all barycentric coordinates of $K$ that do not vanish
  a.e.\ on $f$.

  To use the remaining (as yet unused) condition in the definition of
  $\Bc(K)$, note that the tangential component of the coordinate
  vector $x$, namely $x_{\tp}$ is in $R_1(\d K)$.  Combining this 
  with~\eqref{eq:PiHcurltmp2}, we find that for all
  $\mu \in \trpK P_{p+1}(K)^3$ and any $\kappa \in \RRR$, 
  \begin{align}
    \nonumber
    0  &= \ip{ n \times E, \mu} + \kappa\int_{\d K} E_{\tp}\cdot x
   = \int_{\d K} E_{\tp} \cdot ( \mu \times n + \kappa x_{\tp}) 
    \\        \nonumber  
  & = 
      \sum_f \int_f \gradt (b_f v_p) \cdot ( \mu \times n + \kappa x_{\tp})
    \\
  &  \label{eq:bfvp}
     = \sum_f\int_f b_f v_p \divt( \mu \times n + \kappa x_{\tp}),
  \end{align}
  where the sums run over all faces $f$ of $\d K$.  For any
  $\mu \in \trpK P_{p+1}(K)^3$, the function $\mu \times n$ is in the
  Raviart-Thomas space on the closed manifold $\d K$ denoted by
  $R_{p+1}^c(\d K)$. (Note that unlike $R_1(\d K)$, this space \jrev{consists}
  of functions with the appropriate compatibility conditions across
  edges of $\d K$.) The surface divergence map
  \[
  \divt : R_{p+1}^c( \d K) \to 
  \bigg\{ w \in P_p(\d K): \int_{\d K} w =0\bigg\} 
  \]
  is surjective. Hence the term $\divt( \mu \times n + \kappa x_{\tp})$
  appearing in~\eqref{eq:bfvp} spans all of $P_p(\d K)$ as $\mu$ and
  $\kappa$ are varied. Choosing $\mu$ and $\kappa$ so that
  $\divt( \mu \times n + \kappa x_{\tp}) = v_p$, we conclude that $v_p$
  vanishes and hence $E_{\tp} = \gradt ( b_f v_p) = 0$, i.e.,
  \begin{equation}
    \label{eq:nxq0}
    n \times E = 0 \qquad \text{ on } \d K.
  \end{equation}

  Next, setting $ \phi = \curl r$ in~\eqref{eq:PiHcurltmp1} and
  integrating by parts, we obtain 
  \begin{equation}
    \label{eq:nxq}
    \int_K r \cdot \curl E  = 0 \qquad \forall r \in P_{p+1}(K)^3.     
  \end{equation}
  From~\eqref{eq:ncurl} and \eqref{eq:nxq}, it follows that
  $\tau = \curl E$ is in $\Bd(K)$, and furthermore, $\tau$
  satisfies~\eqref{eq:Hdiva} and \eqref{eq:Hdivb}. Hence, by
  Lemma~\ref{lem:div}, $\tau$ vanishes. Thus $\curl E = 0$ and
  consequently $E = \grad v$ for some $v \in P_{p+3}(K).$ Furthermore,
  by~\eqref{eq:nxq0}, we may choose $v = b_K v_{p-1}$ for some
  $v_{p-1} \in P_{p-1}(K)$, where $b_K$ is the product of all
  barycentric coordinates of $K$.  Then \eqref{eq:PiHcurltmp1} implies
  \[
  \int_K \grad( b_K v_{p-1}) \cdot \phi = 
  \int_K b_K v_{p-1}\cdot \dive \phi = 0
  \qquad \forall \phi \in P_p(K)^3.
  \]
  It now follows from the surjectivity of
  $\dive : P_p(K)^3 \to P_{p-1}(K)$ that $v_{p-1}$, and in turn
  $E = \grad( b_K v_{p-1})$, vanishes on $K$.
\end{proof}

The next lemma defines the operator $\pic$. It will be useful to
observe now that for any $E \in \Bc(K)$, 
\begin{equation}
  \label{eq:17}
\int_{\d K} ( (n \times E) \times n )\cdot r = 0
    \quad \forall r \in R_1(\d K)
    \iff \int_{\d K} ( (n \times E) \times n )\cdot x_{\tp} = 0.
\end{equation}
Indeed, while the forward implication is obvious, the converse follows
from~\eqref{eq:77}.  This shows that the condition that appears both
in the definition of $\Bc(K)$ and in~\eqref{eq:17} above, actually
amounts to just one constraint.

\begin{lemma}   \label{lem:piCurl}
  Given any $E \in \Hcurl K$, there is a unique $\pic E$ in $\Bc(K)$ 
  satisfying~\eqref{eq:PiHcurla}--\eqref{eq:PiHcurlb}.
\end{lemma}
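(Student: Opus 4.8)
The plan is to read Lemma~\ref{lem:piCurl} as the existence half of a unisolvence statement whose uniqueness half is already supplied by Lemma~\ref{lem:uniq}, and to close it by the standard square-system argument. Since $\Bc(K)$ is finite dimensional, conditions~\eqref{eq:PiHcurla}--\eqref{eq:PiHcurlb} form a finite linear system for the unknown $\pic E \in \Bc(K)$: we must produce $G \in \Bc(K)$ with $(q,G)_K = (q,E)_K$ for all $q \in P_p(K)^3$ and $\langle \mu, n\times G\rangle = \langle \mu, n\times E\rangle$ for all $\mu \in \trpK P_{p+1}(K)^3$. The data on the right-hand sides are well defined for $E \in \Hcurl K$: the volume moments since $E \in L^2(K)^3$, and the boundary moments since $n\times E \in \Hhdiv{\d K}$ is paired against $\mu \in \Hhcurl{\d K}$ exactly as in the duality of Lemma~\ref{lem:duality}.

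Introduce the linear map $L : \Bc(K) \to W'$, where $W = P_p(K)^3 \times \trpK P_{p+1}(K)^3$, defined by $L(G)(q,\mu) = (q,G)_K + \langle \mu, n\times G\rangle$, together with the target functional $\ell_E \in W'$ given by $\ell_E(q,\mu) = (q,E)_K + \langle \mu, n\times E\rangle$. Testing against pairs $(q,0)$ and $(0,\mu)$ separately shows that $L(G) = \ell_E$ is equivalent to~\eqref{eq:PiHcurla}--\eqref{eq:PiHcurlb}, and that $L(G) = 0$ is equivalent to the hypotheses~\eqref{eq:PiHcurltmp1}--\eqref{eq:PiHcurltmp2} of Lemma~\ref{lem:uniq}. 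Thus Lemma~\ref{lem:uniq} states precisely that $L$ is injective. Since $\dim W' = \dim W$, injectivity upgrades to bijectivity as soon as $\dim \Bc(K) = \dim W$; in that case there is a unique $G = \pic E \in \Bc(K)$ with $L(G) = \ell_E$, and the linearity of $\pic$ follows from that of $L^{-1}$ and of $E \mapsto \ell_E$.

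The one remaining, and main, ingredient is therefore the dimension identity $\dim \Bc(K) = \dim P_p(K)^3 + \dim \trpK P_{p+1}(K)^3$. I would establish it by bookkeeping against the canonical degrees of freedom of the \Nedelec\ space $N_{p+3}(K)$: the edge condition $t\cdot E = 0$ annihilates all edge moments, while the face conditions built into $\Bc(K)$---namely $\int_{\d K}\phi\,n\cdot\curl E = 0$ for $\phi \in \Pbod$ together with the single $R_1(\d K)$ constraint, which by~\eqref{eq:17} really is one constraint---remove exactly the boundary degrees of freedom not matched by the tangential traces in $\trpK P_{p+1}(K)^3$. Tracking these counts through the exact-sequence structure used in the commuting diagram of Theorem~\ref{thm:Fortin} should leave a residual dimension equal to the number of interior moments $\dim P_p(K)^3$ plus the number of tangential trace moments $\dim \trpK P_{p+1}(K)^3$. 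This combinatorial matching is the delicate part; once it is in place, injectivity from Lemma~\ref{lem:uniq} yields the asserted unique solvability.
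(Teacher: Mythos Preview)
Your proposal is correct and follows essentially the same square-system strategy as the paper: set up the linear map from $\Bc(K)$ into the dual of the moment space, invoke Lemma~\ref{lem:uniq} for injectivity, and close with a dimension count.

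The one place where your outline works harder than necessary is the dimension identity. You aim to prove $\dim \Bc(K) = \dim W$ directly by bookkeeping, which would require checking that the constraints defining $\Bc(K)$ are linearly independent. The paper sidesteps this: counting constraints only yields the \emph{inequality} $n := \dim \Bc(K) \ge \dim N_{p+3}(K) - 12p - 30$, and a separate count gives $m := \dim W = \dim N_{p+3}(K) - 12p - 30$, so $n \ge m$. Then injectivity of $L$ (from Lemma~\ref{lem:uniq}) plus rank--nullity forces $n = \rank(L) \le m$, hence $n = m$ and the system is square. This way the possible linear dependence among the defining constraints of $\Bc(K)$ never needs to be ruled out by hand---it falls out for free. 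Your route would certainly work, but this trick makes the ``delicate part'' you flag considerably less delicate.
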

\begin{proof}
  We need to estimate $n = \dim \Bc(K)$.  First, note that since
  $P_0(\d K) \cap \Pc_{p+2}(\d K)$ is a one-dimensional space of
  constant functions on $\d K$,
  \begin{align*}
    \dim \Pbod & = \dim P_{p+2}(\d K) - \dim( P^c_{p+2}(\d K) + P_0(\d K) )
                 \\
    & = \dim P_{p+2}(\d K) - \dim P^c_{p+2}(\d K) - \dim P_0(\d K)  + 1
    \\
    & = 6p + 11.
  \end{align*}
  The tangential component $E \cdot t$ of any $E \in N_{p+3}(K)$ is a
  polynomial of degree at most $p+2$ on each edge, so $ E \cdot t$
  represents $p+3$ constraints per edge. Hence, counting the number of
  constraints in the definition of $\Bc(K)$,
  \begin{align*}
    n = \dim \Bc(K) 
    & \ge \dim N_{p+3}(K)  - 6(p+3) - \dim(\Pbod) - 1
    \\
    & = 
  \dim N_{p+3}(K)  - 6(p+3) - (6 p+11) - 1
  \end{align*}
  where we have used~\eqref{eq:17}. Thus, 
  \begin{equation}
    \label{eq:nge}
    n \ge \dim  N_{p+3}(K)  -  12 p - 30.
  \end{equation}

  Next, we count the number of equations
  in~\eqref{eq:PiHcurla}--\eqref{eq:PiHcurlb}, namely
  \begin{align*}
    m
    &  = \dim( \trpK P_{p+1}(K)^3 ) + \dim P_p(K)^3 
    \\
    & = 2\dim P_{p+1}(\d K) - 6 (p+2) + \dim P_p(K)^3
      \\
    & = \dim N_{p+3}(K) - 6 (p+2) - 6(p+3).
  \end{align*}
  This together with~\eqref{eq:nge} implies that
  $m = \dim N_{p+3}(K) - 12 p - 30 \le n$ Thus, the
  system~\eqref{eq:PiHcurla}--\eqref{eq:PiHcurlb}, after using a
  basis, is an $m \times n$ matrix system of the form $A x =d$, where
  $x \in \RRR^n$ is the vector of coefficients in a basis expansion of
  $\pic E$ and $d$ is the right-hand side vector made using the given
  $E$.  By Lemma~\ref{lem:uniq}, $\nul (A) = \jrev{\{0\}}$.  Hence
  $ m \le n = \rank (A) + \nul (A) = \rank (A) \le \min(m,n)$ shows
  that $m = n$. The system determining $\pic E$ is therefore a square
  invertible system.
\end{proof}

\begin{lemma} \label{lem:contains}
  For all $v \in H^1(K)/\RRR$ and $E \in \Hcurl K$, 
  \[
  \grad (\pig v) \in   \Bc(K), \qquad 
  \curl (\pic E) \in \Bd(K).
  \]
\end{lemma}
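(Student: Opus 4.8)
The plan is to verify, in each case, the three algebraic/boundary conditions defining the target space, using the exact-sequence property of the finite element spaces for the ``volume'' membership and integration by parts for the interface conditions. Write $G=\grad(\pig v)$ and $T=\curl(\pic E)$. Recall from~\eqref{eq:pigradbdr} that $\grad(\pig v)=\grad(\pigo v_0)$, since the added constant $m_{\d K}(v)$ is annihilated by $\grad$; this also shows the first claim is well posed on $H^1(K)/\RRR$.

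For $G\in\Bc(K)$ I would argue as follows. Since $\pig v\in P_{p+3}(K)$, the exact sequence gives $G\in\grad P_{p+3}(K)\subseteq N_{p+3}(K)$. Because $\pigo v_0\in\Bg(K)$ vanishes on every edge and vertex, $\pig v$ is constant along each edge, so the edge condition $t\cdot G=0$ holds (it is the derivative of $\pig v$ along the edge). The condition on $n\cdot\curl G$ is trivial, as $\curl G=\curl\grad(\pig v)=0$. Finally, the vanishing of $t\cdot G$ on the edges lets me invoke~\eqref{eq:17}, reducing the $R_1(\d K)$ condition to the single requirement $\int_{\d K}\bigl((n\times G)\times n\bigr)\cdot x_{\tp}=0$. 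Integrating by parts on each face (the edge terms drop because $\pigo v_0=0$ on $\d f$, and $\divt x_{\tp}=2$) turns the left-hand side into $-2\int_{\d K}\pigo v_0$, which vanishes because $\int_{\d K}v_0=0$ by construction and~\eqref{eq:pi0b}, tested against a constant normal flux (equivalently~\eqref{eq:PiH1b}), gives $\int_{\d K}\pigo v_0=\int_{\d K}v_0$.

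For $T\in\Bd(K)$, the inclusion $\pic E\in N_{p+3}(K)$ gives $T\in\curl N_{p+3}(K)\subseteq R_{p+3}(K)$, and it remains to check $\int_{\d K}\phi\,n\cdot T=0$ for all $\phi\in\Pbd$. Since $\Pbod\subseteq\Pbd$, the curl condition already built into $\Bc(K)$ disposes of the $\Pbod$ part, and what is genuinely new is orthogonality to the three extra dimensions of $\Pbd$, which are tied to the piecewise constants $P_0(\d K)$. I would therefore split $P_{p+2}(\d K)=\Pbod+\Pc_{p+2}(\d K)+P_0(\d K)$ exactly as in the proof of Lemma~\ref{lem:uniq} preceding~\eqref{eq:ncurl}: the $\Pbod$ part vanishes as noted; the $P_0(\d K)$ part vanishes by Stokes' theorem on each face together with $\pic E\cdot t=0$ on the edges, as in~\eqref{eq:77}; and the $\Pc_{p+2}(\d K)$ part is moved, by integrating by parts twice as in~\eqref{eq:66}, onto the tangential trace $n\times\pic E$, where the moment conditions~\eqref{eq:PiHcurla}--\eqref{eq:PiHcurlb} of $\pic$ enter.

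The continuous ($\Pc_{p+2}(\d K)$) piece is the main obstacle. The low-order tangential constraint $R_1(\d K)$ in $\Bc(K)$ does not help here---by~\eqref{eq:17} it amounts to only one scalar condition---so the full high-order moment condition~\eqref{eq:PiHcurlb} must be used, and the matching of $F_{\tp}=\gradt\Psi$ with $F=\grad\Psi\in P_{p+1}(K)^3$ requires careful bookkeeping of the $\Hhdiv{\d K}$ versus $\Hhcurl{\d K}$ pairings from Lemma~\ref{lem:duality}. This is in sharp contrast with the first inclusion, where the corresponding step is free because $\curl\grad=0$. I expect essentially all the difficulty to sit in making this continuous-trace cancellation precise, while the remaining conditions follow routinely from the exact sequence, Stokes' theorem, and the edge constraints.
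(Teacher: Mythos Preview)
Your argument for $G=\grad(\pig v)\in\Bc(K)$ is correct and matches the paper's proof essentially line for line.

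For the second inclusion you have the right ingredients---the $\Pbod$ piece from the definition of $\Bc(K)$ and the $P_0(\d K)$ piece from Stokes' theorem---but you then take a wrong turn. You do \emph{not} need orthogonality of $n\cdot T$ to all of $P_{p+2}(\d K)$; you only need it against $\Pbd$, which by definition is the $L^2(\d K)$-orthogonal complement of $\Pc_{p+2}(\d K)$ in $P_{p+2}(\d K)$. So the ``continuous ($\Pc_{p+2}(\d K)$) piece'' that you flag as the main obstacle is a phantom: nothing needs to be shown there. Worse, the statement you are attempting for that piece is actually false. The moment condition~\eqref{eq:PiHcurlb} only gives $\langle F_{\tp}, n\times(\pic E - E)\rangle=0$, so after the integration by parts you describe you obtain
\[
\int_{\d K}\psi\,n\cdot\curl(\pic E)=\int_{\d K}\psi\,n\cdot\curl E,
\]
for $\psi\in P_{p+2}(K)$, and the right-hand side has no reason to vanish for a general $E\in\Hcurl K$. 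You were led astray by the template of Lemma~\ref{lem:uniq}, where the hypotheses~\eqref{eq:PiHcurltmp} involve $E$ alone, not $\pic E - E$.

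The paper's route is short: it orthogonally decomposes $\Pbd$ into $P_0(\d K)\cap\Pbd$ and its complement within $\Pbd$, argues that the complement lies in $\Pbod$ (where~\eqref{eq:13} holds because $\pic E\in\Bc(K)$), and disposes of the $P_0(\d K)\cap\Pbd$ part by Stokes exactly as you do. No appeal to the moment conditions~\eqref{eq:PiHcurla}--\eqref{eq:PiHcurlb} is needed anywhere in the second inclusion.
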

\begin{proof}
  Let $e =\grad (\pig v) = \grad (\pigo v_0)$ where
  $\pigo v_0 \in \Bg(K)$ is as in~\eqref{eq:pigradbdr}. Since
  $\pigo v_0$ is constant along edges of $K$, $e$ must satisfy
  $t \cdot e=0$ along the edges. Moreover,
  \begin{equation}
    \label{eq:10}
    m_{\d K} (\pigo v_0) =0
  \end{equation}
  due to~\eqref{eq:pi0b}.  Hence, integrating by parts \jrev{on} any face
  $f$ of $\d K$,
  \begin{align*}
    \int_{f} e_{\tp} \cdot x_{\tp} 
    & = \int_{f} x_{\tp} \cdot \gradt (\pigo v_0)
      = -\int_f \pigo v_0 \divt( x_{\tp}) = -2 \int_f \pigo v_0.
  \end{align*}
  Summing over all faces $f$ of $\d K$ and using~\eqref{eq:10},  we conclude that 
  \[
  \int_{\d K} e_{\tp} \cdot x =0.
  \]
  Therefore, to finish proving that $e \in \Bc(K)$, it only remains to
  show that $\int_{\d K} \phi \,n \cdot \curl e =0$ for all
  $\phi \in \Pbod$. But this is obvious from the fact that $e$ is a
  gradient.

  Next, we need to show that $\sigma = \curl (\pic E)$ is in
  $\Bd(K)$. Since it is obvious that $\sigma \in R_{p+3}(K)$, it
  suffices to prove that 
  \begin{equation}
    \label{eq:13}
    \int_{\d K} \phi \,n \cdot \curl (\pic E) =0
  \end{equation}
  for all $\phi $ in $\Pbd$.  Note that $\Pbd$ can be orthogonally
  decomposed into its subspace $P_0(\d K) \cap \Pbd $ and its
  $L^2(\d K)$-orthogonal complement. The latter is a subspace of
  $\Pbod$ where~\eqref{eq:13} holds (since $\pic E \in \Bc(K)$).
  Hence it only remains to prove that~\eqref{eq:13} holds for $\phi$
  in $P_0(\d K) \cap \Pbd$. But Stokes theorem shows
  that~\eqref{eq:13} actually holds for all $\phi \in P_0(\d K)$ --
  cf.~\eqref{eq:77}.
\end{proof}

\begin{lemma}   \label{lem:commute}
  For all $v \in H^1(K), E \in \Hcurl K,$ and $\sigma \in \Hdiv K$,
  \begin{subequations}
    \begin{align}
      \label{eq:commut-a}
      \grad \pig v & = \pic \grad v,
      \\
      \label{eq:commut-b}
      \curl \pic E & = \pid \curl E,
      \\
      \label{eq:commut-c}
      \dive \pid \sigma & = \vpi_{p+2} \dive \sigma.
    \end{align}
  \end{subequations}
\end{lemma}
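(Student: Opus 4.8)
The plan is to prove all three identities by the same device that was used to \emph{define} the Fortin operators: each operator is pinned down uniquely by its moment conditions, so it suffices to check that the left-hand side lies in the correct finite element space and then satisfies the moment conditions characterizing the right-hand side. For~\eqref{eq:commut-a} and~\eqref{eq:commut-b} the membership is precisely the content of Lemma~\ref{lem:contains}, which places $\grad\pig v$ in $\Bc(K)$ and $\curl\pic E$ in $\Bd(K)$; the uniqueness is then supplied by Lemma~\ref{lem:piCurl} and Lemma~\ref{lem:div}, respectively. For~\eqref{eq:commut-c} the membership $\dive\pid\sigma\in P_{p+2}(K)$ is automatic from $\pid\sigma\in R_{p+3}(K)$, and the characterization of the right-hand side is simply that of the $L^2$-projection $\vpi_{p+2}$.

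I would dispatch~\eqref{eq:commut-c} first. Since $\dive\pid\sigma\in P_{p+2}(K)$, it equals $\vpi_{p+2}\dive\sigma$ as soon as $(\dive(\pid\sigma-\sigma),q)_K=0$ for all $q\in P_{p+2}(K)$. One integration by parts gives $(\dive(\pid\sigma-\sigma),q)_K=-(\pid\sigma-\sigma,\grad q)_K+\langle n\cdot(\pid\sigma-\sigma),q\rangle$; the volume term vanishes by~\eqref{eq:PiHdiva} since $\grad q\in P_{p+1}(K)^3$, and the boundary term vanishes by~\eqref{eq:PiHdivb} since $q|_{\d K}\in\trgK P_{p+2}(K)$. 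Identity~\eqref{eq:commut-b} is only slightly more involved. Writing $w=\pic E-E$, the interior moments $(q,\curl w)_K=0$ for $q\in P_{p+1}(K)^3$ follow by integrating by parts once: the resulting volume term pairs $\curl q\in P_p(K)^3$ against $w$ and dies by~\eqref{eq:PiHcurla}, while the boundary term $\int_{\d K}(n\times w)\cdot q$ involves $w$ only through the tangential trace $(n\times q)\times n\in\trpK P_{p+1}(K)^3$ and dies by~\eqref{eq:PiHcurlb}. The boundary moments $\langle n\cdot\curl w,\mu\rangle=0$ for $\mu=\psi|_{\d K}$, $\psi\in P_{p+2}(K)$, follow from the identity $\int_{\d K}\psi\,n\cdot\curl w=\langle n\times w,\grad\psi\rangle$ (two integrations by parts, exactly as in~\eqref{eq:66}) together with $\grad\psi\in P_{p+1}(K)^3$ and again~\eqref{eq:PiHcurlb}.

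The genuinely delicate case is~\eqref{eq:commut-a}, and I expect the tangential-trace moment to be the main obstacle. Put $w=\pig v-v$. The interior moments $(q,\grad w)_K=0$ for $q\in P_p(K)^3$ are easy: integration by parts produces $-(\dive q,w)_K+\langle n\cdot q,w\rangle$, and since $q\in P_p(K)^3\subset R_{p+1}(K)$ we have $\dive q\in P_{p-1}(K)$ and $n\cdot q|_{\d K}\in\trdK R_{p+1}(K)$, so both terms vanish by~\eqref{eq:PiH1a} and~\eqref{eq:PiH1b}. The remaining condition~\eqref{eq:PiHcurlb} asks for $\langle(n\times F)\times n,\,n\times\grad w\rangle=0$ for every $F\in P_{p+1}(K)^3$. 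Here I would use that on $\d K$ one has $n\times\grad w=n\times\gradt(w|_{\d K})$, reducing the pairing to the surface integral $\int_{\d K}\gradt w\cdot(G\times n)$ with $G=(n\times F)\times n$. Integrating by parts face by face yields an interior term $\int_f w\,\curlt G$ and an edge term $\int_{\d f}w\,(G\times n)\cdot\nu_f$. The interior term vanishes because $\curlt G\in P_p(f)$ while $w|_f$ is $L^2(f)$-orthogonal to $P_p(f)$, which is exactly what~\eqref{eq:PiH1b} delivers face by face (recall $\trdK R_{p+1}(K)=P_p(\d K)$). The crux is the edge term: on an edge with unit tangent $t$ the conormal integrand reduces, up to the orientation sign $n\times\nu_f=\pm t$, to $w\,(F\cdot t)$ (using $G\cdot t=F\cdot t$), a quantity single-valued across the two adjoining faces; since the induced edge orientations from the two faces are opposite while $w$ is continuous on $\d K$, these contributions cancel pairwise and the sum over all faces vanishes. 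With both moment conditions verified, Lemma~\ref{lem:piCurl} (or Lemma~\ref{lem:uniq} applied to the difference $\grad\pig v-\pic\grad v\in\Bc(K)$) forces $\grad\pig v=\pic\grad v$. The orientation bookkeeping in this edge cancellation, together with the care needed to justify the surface integration by parts for the merely $H(\curl)$ field $w$, are the only real subtleties in the whole proof.
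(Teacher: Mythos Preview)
Your overall architecture matches the paper's proof: show the difference lies in the relevant bubble space via Lemma~\ref{lem:contains} (trivially in $P_{p+2}(K)$ for~\eqref{eq:commut-c}), verify it is annihilated by the defining moments, and invoke the corresponding uniqueness lemma. Parts~\eqref{eq:commut-b} and~\eqref{eq:commut-c} are handled essentially as in the paper.

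The only substantive divergence is in the tangential-trace moment for~\eqref{eq:commut-a}, and here the paper's route is markedly simpler than yours. You pass to a surface integral, integrate by parts face by face, and argue that the resulting edge contributions cancel pairwise by orientation. This is correct for smooth~$w$, but---as you yourself flag---$w=\pig v-v$ lies only in $H^1(K)$, so edge traces of~$w$ and the face-wise Stokes formula are not directly available; you would need a density argument to close, which you have not supplied. The paper sidesteps all of this: the $\Hcurl K$ integration-by-parts identity converts the boundary pairing back into a volume integral,
\[
\langle F\times n,\grad w\rangle = -(\curl F,\grad w)_K
\]
(using $\curl\grad w=0$), and since $\curl F\in P_p(K)^3$ this vanishes by the \emph{interior} moment $(\phi,\grad w)_K=0$ for $\phi\in P_p(K)^3$ that you have already established. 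No surface calculus, no edge traces, no density argument---the boundary moment is a one-line corollary of the interior one. Your surface computation is not wrong, but it is the hard way around a step the paper dispatches trivially.
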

\begin{proof}
  By Lemma~\ref{lem:contains}, $\delta_1 = \grad( \pig v) - \pic \grad v$
  is in $\Bc(K)$. We will now show that $\delta_1$ satisfies~\eqref{eq:PiHcurltmp}.
  Let $\phi \in P_p(K)^3$ and consider
  \begin{align*}
    (\phi, \delta_1)_K
    & = (\phi, \grad( \pig v -v ))_K - (\phi, \pic (\grad v) - \grad v)_K.
  \end{align*}
  By Lemma~\ref{lem:piCurl}, $\pic \grad v$
  satisfies~\eqref{eq:PiHcurla}--\eqref{eq:PiHcurlb}, so the last term
  above vanishes. Integrating the remaining term on the right-hand
  side by parts, and using~\eqref{eq:PiH1a}--\eqref{eq:PiH1b}, 
  we find
  that
  \begin{equation}
    \label{eq:12}
    (\phi, \grad( \pig v -v ))_K =0 \qquad \forall \phi \in P_p(K)^3.
  \end{equation}
  This proves that $(\phi,\delta_1)_K=0$, i.e.,~\eqref{eq:PiHcurltmp1}
  holds.  Next, for any $\mu = \trpK(F),$ $F \in P_{p+1}(K)^3,$ we
  have
  \begin{align*}
    \ip{ \mu, n \times \delta_1}
    & = \ip{F \times n, \grad( \pig v -v )}
      - \ip{ \mu, n \times (\pic (\grad v) - \grad v) }.
  \end{align*}
  The last term vanishes due to~\eqref{eq:PiHcurlb}. Moreover,
  \[
  \ip{F \times n, \grad( \pig v -v )} = -(\curl F, \grad (\pig v -
  v))_K=0
  \]
  due to~\eqref{eq:12}, so we have proven that~\eqref{eq:PiHcurltmp2}
  holds as well. Hence by Lemma~\ref{lem:uniq}, $\delta_1=0$. This
  proves~\eqref{eq:commut-a}.

  To prove~\eqref{eq:commut-b}, we proceed similarly
  and show that $\delta_2 = \curl \pic E -\pid \curl E$ is
  zero. By Lemma~\ref{lem:contains}, we know that $\delta_2 \in \Bd(K)$, so
  if we prove that 
\begin{subequations} \label{eq:PiHdivtmp}
  \begin{align}
    \label{eq:PiHdivtmp1}
    (\delta_2, \phi)_{K}
    & 
    =0  
    && \forall \phi \in P_{p+1}(K)^3,
    \\  \label{eq:PiHdivtmp2}
    \langle n \cdot \delta_2, \mu\rangle_{\partial K}
    & = 0
    && \forall \mu \in \trgK P_{p+2}(K).
  \end{align}
\end{subequations}
then Lemma~\ref{lem:div} would yield $\delta_2=0$. To
prove~\eqref{eq:PiHdivtmp2},
\begin{align*}
  \langle n \cdot \delta_2, \mu\rangle_{\partial K}
  &= 
  \langle n \cdot (\curl (\pic E - E) + (I-\pid )\curl E), \mu\rangle_{\partial K}
  \\
  & = 
  \langle n \cdot \curl (\pic E - E), \mu\rangle_{\partial K}
  && \text{ by~\eqref{eq:Hdivb}}
  \\
  & = 
  \langle (\pic E - E) \times n, \gradt \mu\rangle_{\partial K} = 0
  && \text{ by~\eqref{eq:PiHcurlb}}.
\end{align*}
To prove~\eqref{eq:PiHdivtmp1}, 
\begin{align*}
  (\delta_2, \phi)_K
  & = (\curl (\pic E - E) + (I-\pid )\curl E, \phi)_K
  \\
  & = (\curl (\pic E-E), \phi)_K
    && \text{ by~\eqref{eq:PiHdiva}}
  \\
  & = ( \pic E-E, \curl\phi)_K + \ip{ (\pic E-E) \times n , \phi } =0
  && \text{ by~\eqref{eq:PiHcurla}--\eqref{eq:PiHcurlb}.}
\end{align*}
This finishes the proof of~\eqref{eq:PiHdivtmp} and
hence~\eqref{eq:commut-b} follows.

Finally, to prove~\eqref{eq:commut-c}, let
$\delta_3 = \dive \pid \sigma - \vpi_{p+2} \dive\sigma$ in
$P_{p+2}(K)$. For any $w \in P_{p+2}(K)$, integrating by parts, 
\begin{align*}
  (\delta_3, w)_K
  & = ( \dive (\pid \sigma - \sigma), w)_K
  \\
  & = -( \pid \sigma - \sigma, \grad w)_K
    + \ip{ n \cdot (\pid \sigma - \sigma), w},
\end{align*}
which vanishes by~\eqref{eq:PiHdiva}--\eqref{eq:PiHdivb}.  Hence
$\delta_3=0$ and~\eqref{eq:commut-c} is proved.
\end{proof}

\bigskip

\begin{proof}[Proof of Theorem~\ref{thm:Fortin}]
  The lemmas of this section prove all statements of
  Theorem~\ref{thm:Fortin} except~\eqref{eq:bdd-b}.  To
  prove~\eqref{eq:bdd-b}, we use a scaling argument and the
  commutativity properties of Lemma~\ref{lem:commute}.  Let $\hat K$
  denote the unit tetrahedron and let the $H(\mathrm{curl})$-Fortin
  operator on $\hat K$, defined as above, be denoted by $\pich.$ By
  the unisolvency result of Lemma~\ref{lem:piCurl}, \jrev{the
$\Hcurl {\hat K}$-boundedness
    of the sesquilinear forms $(q, \pich E)$ and
    $\ip{ F_\tp, n \times \pich E}$ (a consequence 
      of Lemma~\ref{lem:duality}),} and by finite dimensionality,
  there is a $C_0>0$ such that
  \[
  \| \pich \hat E \|_{\Hcurl {\hat K}}^2 \le C_0 \| \hat E \|_{\Hcurl {\hat K}}^2
  \]
  for all $\hat E \in \Hcurl {\hat K}$.
  Let
  $S_K : \hat K \to K$ be the one-to-one affine map that maps $\hat K$
  onto a general tetrahedron~$K$. For 
  $E: K \to \RRR^3$, define
  $
  \Phi(E) = (S_K')^t ( E \circ S_K),
  $
  and
  $\ncrl E K^2 = h_K^{-2} \| E \|_{L^2(K)}^2 + \| \curl E
  \|_{L^2(K)}^2$.
  {The elementary proofs of the following assertions
    (i)--(iii) are left to the reader.}
  \begin{enumerate}
  \item[(i)] $ E \in \Bc(K)$ if and only if
    $ \Phi(E) \in \Bc(\hat K).$ 

  \item[(ii)] There are constants $C_1,C_2$ depending only on the shape
    regularity of $K$ (but not on $h_K$) such that
    $ C_1 \ncrl { E }{K }^2 \le h_K \ncrl { \Phi(E) }{\hat K }^2 \le
    C_2 \ncrl { E }{K}^2.$

  \item[(iii)] $ \Phi(\pic E) = \pich \Phi(E).$  
  \end{enumerate}
  These three statements imply that 
  \[
  C_1  \ncrl{ \pic E }{K}^2 
  \le 
  h_K \ncrl { \pich\Phi(E) }{\hat K }^2
  \le 
  h_K C_0 \ncrl { \Phi(E)  }{\hat K }^2
  \le C_0C_2 \ncrl {  E }{K }^2.
  \]
  While this immediately gives the needed estimate for the $L^2$-part, namely
  \begin{equation}
    \label{eq:18}
    \|\pic E \|_{L^2(K)} \le C \| E \|_{\Hcurl K},
  \end{equation}
  we need to improve the estimate on the curl to finish the proof:
  For
  this, we use the commutativity property
  \begin{equation}
    \label{eq:16}
    \begin{aligned}
      \| \curl \pic E \|_{L^2(K)}  
      & = \| \pid \curl E \|_{L^2(K)} 
      && \text{ by~Lemma~\ref{lem:commute},}
      \\
      & \le C \| \curl E \|_{\Hdiv K}
      && \text{ by~Lemma~\ref{lem:div}}.
    \end{aligned}    
  \end{equation}
  The required estimate~\eqref{eq:bdd-b} follows from~\eqref{eq:18}
  and~\eqref{eq:16}.
\end{proof}

\bigskip

Before concluding this section, let us illustrate how to use
Theorem~\ref{thm:Fortin} for error analysis of DPG methods by an 
example.

\begin{example}[Primal DPG method for the Dirichlet problem]
  Consider the broken variational problem of Example~\ref{eg:primal}: Find
  $(u, \hat{\sigma}_n) \in X = \Ho^1(\om) \times H^{-1/2}(\d\oh)$
  such that~\eqref{eq:11} holds with 
  \[
  b( \, (u, \hat{\sigma}_n), y \,) 
  = (\grad u, \grad y)_h + \ip{ \hat{\sigma}_n, y}_h, \qquad
  Y = H^1(\oh).
  \]
  We want to analyze the DPG method given by~\eqref{eq:pdpg} with 
  \begin{align*}
  X_h 
    & = \{ (w_h, n \cdot \hat \tau_h) \in X: \;
      w_h|_K \in P_{p+1}(K) \text{ and } n \cdot \hat \tau_h|_{\d K} \in \trdK R_{p+1}(K) 
      \\
    & \hspace{4cm}
      \text{ for all (tetrahedral mesh elements) } K \in \oh \},
    \\
    Y_h & = \{ y_h \in H^1(\oh): y_h|_K \in P_{p+3}(K) \text{ for all } 
          K \in \oh \}.
  \end{align*}
  We have already shown in Example~\ref{eg:primal} that the inf-sup
  condition required for application of Theorem~\ref{thm:dpg} holds
  (and $Z= \{0\}$).  Hence to obtain optimal error estimates from
  Theorem~\ref{thm:dpg}, it suffices to verify
  Assumption~\ref{asm:pi}. We claim that Assumption~\ref{asm:pi} holds
  with $\vpi = \pig$. Indeed, 
  \begin{align*}
    b( \,(w_h, n \cdot \hat \tau_h),  y - \pig y)
    & = 
    (\grad w_h, \grad (y-\pig y)\, )_h 
      +
      \ip{ n \cdot \hat \tau_h, y-\pig y}_h \qquad
    \\
    & = -(\Delta w_h, y-\pig y )_h 
      + \ip{ n \cdot \hat \tau_h - 
      \frac{ \d w_h}{ \d n}, y-\pig y}_h       =0
  \end{align*}
  by applying~\eqref{eq:PiH1a}--\eqref{eq:PiH1b} element by element.
  Note that here we have used the fact that the discrete spaces have
  been set so that $-\Delta w_h|_K \in P_{p-1}(K)$ and
  $ n \cdot \hat \tau_h - n \cdot \grad w_h$ is a polynomial of degree
  at most $p$ on each face of $\d K$ (i.e., it is in
  $\trdK R_{p+1}(K)$), allowing us to
  apply~\eqref{eq:PiH1a}--\eqref{eq:PiH1b}. Applying
  Theorem~\ref{thm:dpg}, we recover the error estimates for this
  method, originally proved in~\cite{DemkoGopal13a}.\hfill\qeg
\end{example}

\section{Maxwell equations}   \label{sec:maxwell}

In this section, we combine the various tools developed in the
previous sections to analyze the DPG method for a model problem in
time-harmonic electromagnetic wave propagation.

\subsection{The cavity problem}

Consider a cavity $\om$, an open bounded connected and contractible
domain in $\RRR^3$, shielded from its complement by a perfect electric
conductor throughout its boundary $\d \om$.  If all time variations
are harmonic of frequency $\og>0$, then Maxwell equations within the
cavity reduce to these:
\begin{subequations} \label{eq:Maxwell}
  \begin{align}
    \label{eq:Maxwell1}
    -\ii \og \mu H + \curl E &  =0   && \text{ in } \om, 
    \\
    \label{eq:Maxwell2}
    -\ii \og \epsilon E - \curl H & = -J && \text{ in } \om, 
    \\
    \label{eq:Maxwell-PEC}
    n \times E & = 0 && \text{ on } \d \om.
  \end{align}
\end{subequations}
The functions $E, H, J : \om \to \CCC^3$ represent electric field,
magnetic field, and imposed current, respectively, and $\ii$ denotes
the imaginary unit. For simplicity we assume that the electromagnetic
properties $\epsilon$ and $\mu$ are {positive} and constant on each element
of the tetrahedral mesh~$\oh$. The number $\og>0$ denotes a fixed
wavenumber.  In this section we develop and analyze a DPG method
for~\eqref{eq:Maxwell}.

Eliminating $H$ from~\eqref{eq:Maxwell1} and~\eqref{eq:Maxwell2}, we
obtain the following second order (non-elliptic) equation
\begin{equation}
  \label{eq:9}
  \curl \mu^{-1} \curl E - \og^2 \epsilon E = f,
\end{equation}
where $f = \ii \og J$. The standard variational formulation for this
problem is obtained by multiplying~\eqref{eq:9} by a test function
$F \in \Hcurlo \om$, integrating by parts and using the boundary
condition~\eqref{eq:Maxwell-PEC}: Find $E \in \Hcurlo\om$ satisfying 
\begin{equation}
  \label{eq:20}
  (\mu^{-1} \curl E, \curl F)_\om - \og^2 (\epsilon E, F)_\om = \ip{ f, F}
\end{equation}
for any given $f \in \Hcurlo\om'$.  It is well-known~\cite{Monk03b}
that~\eqref{eq:20} has a unique solution for every $f \in \Hcurlo\om'$
whenever $\og$ is not in the countably infinite set $\varSigma$ of
resonances of the cavity~$\om$. Throughout this section, we assume
$\og \not\in \varSigma$. This wellposedness result provides an
accompanying stability estimate, namely there is a constant $\cog>0$
such that
\begin{equation}
  \label{eq:19}
  \| E \|_{\Hcurl\om} \le \cog \| f \|_{\Hcurlo\om'}
\end{equation}
for any $f \in \Hcurlo\om'$ and $E\in \Hcurlo\om$
satisfying~\eqref{eq:20}.  Note that the stability constant $\cog$
may blow up as $\og$ approaches a resonance. We continue to use $C$ to
denote a generic mesh-independent constant, which in this section may
depend on $\og, \mu,$ and $\eps$ as well.

\subsection{Primal DPG method for the cavity problem} 

The primal DPG method for the cavity problem is obtained by
breaking~\eqref{eq:20}.  Multiply~\eqref{eq:9} by a (broken) test
function $F \in \Hcurl \oh$ and integrate by parts, element by
element, to get
\[
( \mu^{-1} \curl E, \curl F)_h \jrev{+} \ip{ n \times \mu^{-1} \curl E, F}_h 
- \og^2 (\veps E, F)_h = (f, F)_h.
\]
Now set $ \Hd \equiv n \times \hat H = (\ii \og)^{-1}n \times \mu^{-1} \curl E$ to
be an independent interface unknown which is to be found in
$\Hhdiv\oh$. This leads to the variational problem~\eqref{eq:8} with
the following spaces and forms:
\begin{subequations} \label{eq:dpgmaxwellcavity}
\begin{gather}
  X_0   = \Hcurlo \om,        \qquad   Y=\Hcurl\oh,
  \\
  \hat X  = \Hhdiv{\d\oh},     \qquad Y_0 = \Hcurlo\om,
  \\
  b_0( E,F) =   ( \mu^{-1} \curl E, \curl F)_h  - \og^2 (\veps E, F)_h, 
   \\
   \hat b( \Hd, F) = \jrev{\ii} \og\ip{\Hd, F}_h.
\end{gather}  
\end{subequations}
This is the primal DPG formulation for the Maxwell cavity problem.

The numerical method discretizes the above variational problem using
subspaces $X_h \subset X = X_0 \times \hat X$ and $Y_h \subset Y$
defined by
\begin{subequations}
  \label{eq:MaxwellXYh}
\begin{align}
  X_h
  & = \{ ( E_h, n \times \hat H_h) \in 
    {    \Hcurlo\om \times \Hhdiv{\d\oh} :}\;
        n \times \hat H_h|_{\d K}  \in \trcK P_{p+1}(K)^3, 
  \\ \nonumber 
  & \hspace{5.3cm} \text{ and }   
    E_h|_K \in P_{p}(K)^3 \text{ for all } K \in \oh \}, 
  \\ 
  Y_h & = \{ F_h \in \Hcurl\oh:  
        F_h|_K \in N_{p+3}(K) \text{ for all } K \in \oh \}.
\end{align}
\end{subequations} 
We have the following error bound for the numerical solution in terms
of the mesh size $h = \max_{K \in \oh} h_K$ and polynomial degree $p\ge 1$.

\begin{corollary}   \label{cor:primalMaxwell}
  Suppose $(E_h, n \times \hat H_h) \in X_h$ is the DPG solution given
  by~\eqref{eq:pdpg} with forms and spaces set
  by~\eqref{eq:dpgmaxwellcavity} and let $(E, n \times \hat H) \in X$
  be the exact solution of~\eqref{eq:11}. Then, there exists a $C$
  depending only on $\og$, $p$, and the shape regularity of the mesh
  such that
  \begin{align*}
  \| E - E_h \|_{\Hcurl\om} 
    & + 
  \| n \times ( \hat H - \hat H_h)\|_{\Hhdiv{\d\oh}}
    \\
    & \le C h^p 
  \left( 
    | E |_{H^{p+1}(\om)}
    +
    | \curl E |_{H^{p+1}(\om)}
    +
    | \curl H |_{H^{p+1}(\om)}
  \right).
  \end{align*}
\end{corollary}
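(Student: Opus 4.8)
The plan is to obtain the result as a direct application of the \emph{a priori} estimate~\eqref{eq:Apriori} in Theorem~\ref{thm:dpg}, so the work splits into verifying the three hypotheses of that theorem and then estimating the best-approximation error. First I would establish the inf-sup condition~\eqref{eq:infsupbroken} for the broken form $b$ together with $Z=\{0\}$. This is exactly the setting of Theorem~\ref{thm:hybrid}, so I would verify its two hypotheses for the data in~\eqref{eq:dpgmaxwellcavity}: Assumption~\ref{asm:A0} for the unbroken form $b_0$ on $Y_0=\Hcurlo\om$ follows from the wellposedness estimate~\eqref{eq:19}, which (since $\og\notin\varSigma$) gives a stable unbroken variational problem for~\eqref{eq:20}; Assumption~\ref{asm:hybrid} follows from the duality theory of Section~\ref{sec:break}, namely the inf-sup bound~\eqref{eq:infsupinterface} is precisely identity~\eqref{eq:duality3} (giving $\hat c=\og$ up to the constant $\ii\og$ in $\hat b$), and the kernel condition~\eqref{eq:Yo} is the equivalence~\eqref{eq:conformequiv3} characterizing $\Hcurlo\om$ inside $\Hcurl\oh$. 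Triviality of $Z_0$ is again inherited from the wellposedness of~\eqref{eq:20}. Applying Theorem~\ref{thm:hybrid} then yields~\eqref{eq:infsupbroken} and $Z=\{0\}$.

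Next I would supply the Fortin operator required by Assumption~\ref{asm:pi}. The natural candidate is $\varPi=\pic$ from Theorem~\ref{thm:Fortin}, applied element by element, whose $\Hcurl\oh$-boundedness is~\eqref{eq:bdd-b}. The verification that $b((E_h,n\times\hat H_h),\,F-\pic F)=0$ for all trial functions in $X_h$ proceeds exactly as in the primal DPG example at the end of Section~\ref{sec:fortin}: writing out
\[
b((E_h,n\times\hat H_h),\,F-\pic F)
=(\mu^{-1}\curl E_h,\curl(F-\pic F))_h-\og^2(\veps E_h,F-\pic F)_h
+\ii\og\ip{\hat H_\dv,F-\pic F}_h,
\]
I would integrate the first term by parts element by element and then invoke the moment conditions~\eqref{eq:PiHcurla} for the volume terms and~\eqref{eq:PiHcurlb} for the interface term. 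The key bookkeeping is the degree count: with $E_h|_K\in P_p(K)^3$ and $\mu,\eps$ piecewise constant, one checks that $\curl(\mu^{-1}\curl E_h)$ and $\og^2\veps E_h$ pair against $\pic F-F$ through $P_p(K)^3$ moments, which~\eqref{eq:PiHcurla} annihilates, while $n\times\mu^{-1}\curl E_h|_{\d K}\in\trcK P_{p+1}(K)^3$ matches the trace moment space in~\eqref{eq:PiHcurlb}. This is where I would be most careful, since the whole construction of $\pic$ was engineered precisely so these moment spaces line up; I expect this degree-matching, rather than any deep analysis, to be the one place where an off-by-one error could creep in.

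Finally, with Theorem~\ref{thm:dpg} in force, quasi-optimality~\eqref{eq:Apriori} reduces the problem to bounding $\inf_{z_h\in X_h}\|x-z_h\|_X$ in the product norm on $X=\Hcurlo\om\times\Hhdiv{\d\oh}$. For the field component I would use the standard commuting \Nedelec\ interpolant of degree $p$, giving the $O(h^p)$ bound $\|E-E_h^I\|_{\Hcurl\om}\le Ch^p(|E|_{H^{p+1}(\om)}+|\curl E|_{H^{p+1}(\om)})$ under the assumed smoothness. For the interface component $n\times\hat H$, the quotient-norm definition~\eqref{eq:norms} lets me bound $\|n\times(\hat H-\hat H_h)\|_{\Hhdiv{\d\oh}}$ by the $\Hcurl\om$-error of any conforming approximation of the minimum-energy extension, i.e.\ by interpolation of $\hat H$; since $\mu^{-1}\curl E=\ii\og\hat H$ this reduces to an interpolation estimate controlled by $|\curl H|_{H^{p+1}(\om)}$ (and the $\curl E$ seminorm), producing the remaining $O(h^p)$ term. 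Collecting the two contributions and absorbing $\|b\|\|\varPi\|/c_1$ into the constant $C$ yields the stated bound. The routine pieces here are the interpolation estimates; the only subtlety worth remarking is that the interface error is measured through the minimum-energy-extension norm, so its bound must be read off from~\eqref{eq:duality3}--\eqref{eq:norms} rather than from a naive trace inequality.
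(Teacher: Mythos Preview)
Your proposal is correct and follows essentially the same route as the paper's proof: verify Assumptions~\ref{asm:A0} and~\ref{asm:hybrid} via~\eqref{eq:19} and Theorem~\ref{thm:duality}, invoke Theorem~\ref{thm:hybrid} to get~\eqref{eq:infsupbroken} and $Z=\{0\}$, check Assumption~\ref{asm:pi} with $\varPi=\pic$ using the moment conditions~\eqref{eq:PiHcurla}--\eqref{eq:PiHcurlb} (the paper merely says this ``immediately follows'' without spelling out the degree count you give), and finish with Theorem~\ref{thm:dpg} plus standard \Nedelec\ interpolation after lifting the interface variable via $H=(\ii\og\mu)^{-1}\curl E$. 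Your bookkeeping on the degree matching and on reading the interface norm through the quotient definition~\eqref{eq:norms} is exactly what the paper does implicitly.
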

\begin{proof}
  To apply Theorem~\ref{thm:dpg}, we must verify the inf-sup
  condition~\eqref{eq:infsupbroken} for the broken form. As in the
  previous examples, as a first step, we verify the inf-sup condition
  for the unbroken form stated in Assumption~\ref{asm:A0}.  Given any
  $E \in \Hcurlo\om$, let $f_E \in \Hcurlo\om'$ be defined by
  $\ip{ f_E, F} = (\mu^{-1} \curl E, \curl F)_\om - \og^2 (\epsilon E,
  F)_\om$
  for all $F \in \Hcurlo\om$. Then, \eqref{eq:9} and~\eqref{eq:19}
  \jrev{imply}
  \[
  \| E \|_{\Hcurl \om} \le \cog \| f_E \|_{\Hcurlo\om'} = 
  \cog \sup_{F \in \Hcurl\om} \frac{ |b_0( E,F) |}
  { \hspace{0.5cm} \| F \|_{\Hcurl \om} \hspace{-0.5cm}},
  \]
  i.e., Assumption~\ref{asm:A0} holds with $c_0 = \cog^{-1}.$
  Assumption~\ref{asm:hybrid}, with $\hat c = \og^{-1}$ is immediately verified
  by~\eqref{eq:conformequiv3} and~\eqref{eq:duality3} of
  Theorem~\ref{thm:duality}. Hence Theorem~\ref{thm:hybrid}
  verifies~\eqref{eq:infsupbroken} and also shows that $Z=\{0\}$.  The
  only remaining condition to verify before applying
  Theorem~\ref{thm:dpg} is Assumption~\ref{asm:pi}, which immediately
  follows by the choice of spaces and Theorem~\ref{thm:Fortin}. 

  Applying Theorem~\ref{thm:dpg}, we find that
  \begin{align*}
    \| E - E_h \|_{\Hcurl\om}^2 
    & + 
    \| n \times (\hat H   - \hat H_h)\|_{\Hhdiv{\d\oh}}^2
    \\
    \le &
         \; C \inf_{(G_h, n \times \hat R_h) \in X_h} 
      \left[ 
      \| E - G_h \|_{\Hcurl\om}^2 +
      \| n \times \hat H - n \times \hat R_h\|_{\Hhdiv{\d\oh}}^2 \right].
  \end{align*}
  Now, $H = (i\og\mu)^{-1} \curl E$ is an extension to $\om$ of the
  exact interface solution $n \times \hat H$. Moreover, the interface
  function $n \times \hat R_h$ appearing above can be extended into
  $X_{0,h}^p = \{ r \in \Hcurlo\om: \; r_h|_K \in P_{p+1}(K)^3\}$.
  Since the interface norm is the minimum over all extensions, by
  standard approximation estimates (see e.g.,
  \cite[Theorem~8.1]{DemkoGopalSchob12}),
  \begin{align*}
    \| E & - E_h \|_{\Hcurl\om}^2 
     + 
    \| n \times (\hat H - \hat H_h) \|_{\Hhdiv{\d\oh}}^2
    \\
    & \le C
      \left[
      \inf_{G_h \in X_{0,h}^p} \| E - G_h \|_{\Hcurl\om}^2 
      +
      \inf_{R_h \in X_{0,h}^{p+1}} \| H - R_h \|_{\Hcurl\om}^2
      \right]
    \\
    &
      \le 
      C  \sum_{K\in \oh}  
      \bigg[ h_K^{2(s_1+1)} |E|_{H^{s_1+1}(K)}^2
      + 
      h_K^{2s_1} |\curl E|_{H^{s_1+1}(K)}^2 + 
    \\
    & \hspace{2.1cm}
       h_K^{2 (s_2+1)} |H|_{H^{s_2+1}(K)}^2
      +
      h_K^{2s_2} | \curl H |_{H^{s_2+1}(K)}^2
      \bigg],
  \end{align*}
  where $1/2<s_1 \le p$ and
  $1/2<s_2 \le p+1$. Hence the corollary follows.
\end{proof}

\begin{remark}
  Unlike the standard finite element method, for the DPG method, there
  is no need for~$h$ to be ``sufficiently small'' to assert \rev{the
    convergence estimate of Corollary~\ref{cor:primalMaxwell}.}
  \jrev{This property has been called {\em absolute stability} by some
    authors and other methods possessing this property are
    known~\cite{FengWu14}.}
\end{remark}

\subsection{{Alternative} formulations of the same problem}
\label{ssec:altMax}

In Example~\ref{eg:diffus}, we saw that a single
diffusion-convection-reaction equation admits various different
formulations. The situation is similar with Maxwell equations.  First,
let us write~\eqref{eq:Maxwell} in operator form using an operator $A$
(analogous to the one in~\eqref{eq:Adiffus}, but now) defined by
\[
A 
\begin{bmatrix}
  H \\ E
\end{bmatrix}
= 
\begin{bmatrix}
  \ii \og \mu & -\curl \\
  \curl & \ii \og\eps 
\end{bmatrix}
\begin{bmatrix}
  H \\ E
\end{bmatrix}
= 
\begin{bmatrix}
  \ii \og \mu H - \curl E \\
  \ii \og \eps E + \curl H 
\end{bmatrix}
\]
as $A (H,E) = (0,J)$
for some given $J$ in $L^2(\om)^3$.  However, we will not restrict to
right-hand sides of this form as we will need to allow the most
general data possible in the ensuing wellposedness studies.

We view $A$ as an unbounded closed operator on $L^2(\om)^6$ whose
domain is
\[
\dom(A)  = \{ (H, E) \in \Hcurl\om^2 : n \times E = 0 \text{ on } \d \om \}.
\]
It is easy to show that its adjoint (in the sense of closed operators)
is the closed operator $A^*$ given by
\[
A^* 
\begin{bmatrix}
  H \\ E 
\end{bmatrix}
= 
\begin{bmatrix}
  -\ii\og \mu & \curl \\
  -\curl  & -\ii \og\eps
\end{bmatrix}
\begin{bmatrix}
  H \\ E 
\end{bmatrix}
= 
\begin{bmatrix}
  -\ii \og \mu H + \curl E \\
  -\ii \og \eps E - \curl H 
\end{bmatrix},
\]
whose domain is the following subspace of $L^2(\om)^6$:
\[
\dom(A^*) 
 = \{ (H, E) \in \Hcurl\om^2 : E_{\tp} =0  \text{ on } \d \om \}.
\]
Classical arguments show that both $A$ and $A^*$ are injective. 
To facilitate comparison, we list all our formulations at once,
including the already studied primal form.
\begin{description}

  \item[Strong form] 
    Let $x = (H,E)$ be a group variable. Set
    \begin{alignat*}{4}
      &X_0   = \Hcurl \om \times \Hcurlo \om,        
      &\quad& Y= Y_0=L^2(\om)^6.
    \end{alignat*}
    Note that $X_0 = \{ \dom(A), \|\cdot \|_{\Hcurl\om}\},$ i.e.,
    $\dom(A) $ considered as a subspace of $\Hcurl\om^2$ (rather than
    as a subspace of $L^2(\om)^6$).  The Maxwell problem is
    to find $x \in X_0,$ given $f \in Y_0,$ such that $ A x = f.  $
    This fits into our variational framework~\eqref{eq:5} by setting
    $b_0$ to
  \begin{gather*}
    \quad
    b_0^S( x, y) = (Ax, y)_\om.
  \end{gather*}

  \item [Primal form for $E$] %
    This is the same as in~\eqref{eq:dpgmaxwellcavity}, i.e., 
    with the  spaces as set there, 
    with $\hat b$ set to
    $\hat b^E( \hat{H}_\dv, F) = -\ii \og \ip{ \hat{H}_\dv, F}_h$
    and $b_0$ set to
    \begin{gather*}
      b_0^E(E,F)
       = (\mu^{-1}\curl E, \curl F)_\om - \og^2 (\eps E,F)_\om, 
    \end{gather*}
    the electric primal formulation is~\eqref{eq:5}
    and its broken version is~\eqref{eq:8}.

  \item [Primal form for $H$] %
    Eliminating $E$ from~\eqref{eq:Maxwell}, we obtain
    $\curl \eps^{-1} \curl H - \og^2 \mu H = \curl \eps^{-1} J$ and a
    (possibly nonhomogeneous) boundary condition on
    $n \times\eps^{-1}\curl H$.  With this in place of~\eqref{eq:20} as the
    starting point and repeating the derivation that led
    to~\eqref{eq:dpgmaxwellcavity}, we obtain the following magnetic
    primal form. Set 
    \begin{alignat*}{4}
      &X_0   = \Hcurl \om, 
      &\quad& Y_0=X_0,
      \\
      & \hat X = \Hhdivo\oh,
      &\quad & Y = \Hcurl\oh,
      \\
      & b_0^H( H,F)
      =   ( \eps^{-1} \curl H, \curl F)_h  - \og^2 (\mu H, F)_h, 
      &\qquad&
      \hat b^H( \Ed, F) =  \ii \og\ip{ \Ed, F}_h.
    \end{alignat*}
    With $\hat b$ set to $\hat b^H$ and $b_0$ set to $b_0^H$, the
    magnetic primal formulation is~\eqref{eq:5} and its broken version
    is~\eqref{eq:8}.

  \item[Ultraweak form] %
    This form is obtained by integrating by parts all equations of the
    strong form.  Using group variables $x = (H,E)$, $y = (R,S)$ and
    $\hat x = ( \Ht, \Et)$, set
    \begin{subequations}      \label{eq:MaxUW}
    \begin{alignat}{4}
    &X_0   = L^2(\om)^6,        
    &\quad&  Y_0  = \Hcurl\om \times \Hcurlo\om,
    \\
    & Y  = \Hcurl\oh^2,
    &\quad & 
    \hat X = \Hhcurl{\d\oh} \times \Hhcurlo{\d\oh},
    \\
    & b_0^U( x,y) = (x, A^* y)_h,        
    &\quad&  
    \hat{b}^U (\hat x, y)
    = \ip{ \Ht, n \times S}_h - \ip{ \Et, n \times R}_h
    \end{alignat}
    \end{subequations}
    and consider formulations~\eqref{eq:5} and~\eqref{eq:8} with
    $b_0 = b_0^U$ and $\hat b = \hat{b}^U$.  Note that in the
    definition of $b_0^U$, the operator $A^*$ is applied element by
    element, per our tacit conventions when using the
    $(\cdot,\cdot)_h$-notation.

  \item[Dual Mixed form] %
    Among the two equations in the strong form, if one weakly imposes
    (by integrating by parts) the first equation and strongly imposes
    the second, then we get the following dual mixed form. Set
    \begin{alignat*}{4}
    &X_0   = \Hcurl\om \times L^2(\om)^3,
    & \qquad & Y_0 = X_0,
    \\
    &\hat X  = \Hhcurlo{\d\oh}, 
    & \quad &  Y = \Hcurl \oh \times L^2(\om)^3
    \end{alignat*}
    and consider~\eqref{eq:5} with $b_0 =
    b_0^D$, 
    \begin{align*}
      b_0^D(\, (H,E), (R,S)\,)
      & = (\ii \og \mu H, R)_\om
        - (E, \curl R)_h        \\
      & +  (\ii \og \eps E + \curl H, S)_h.
    \end{align*}
    Its broken version is~\eqref{eq:8} with $\hat b$ set to
    $\hat b^D( \Et, (R,\jrev{S})\, ) = \ip{ \Et, n \times R}_h$.

  \item[Mixed form]  %
    Reversing the roles above and weakly imposing the second equation while
    strongly imposing the first, we get another mixed formulation. Set 
    \begin{alignat*}{4}
    &X_0   = L^2(\om)^3\times \Hcurlo\om,
    &\qquad&   Y_0  = X_0,
    \\
    &\hat X  = \Hhcurl{\d\oh},
    &\qquad& Y  =  L^2(\om)^3 \times \Hcurl\oh
    \end{alignat*}
    and consider~\eqref{eq:5} with $b_0 =
    b_0^M$, 
    \begin{align*}
      b_0^M(\, (H,E), (R,S)\,)
      & = (\ii \og \mu H - \curl E, R)_h \\
      & + (\ii \og \eps E, S)_\om + (H, \curl S)_h. 
    \end{align*}
    Its broken version is~\eqref{eq:8} with $\hat b$ set to
    $ \hat b^M( \Ht, (R,S)\,) = \ip{ \Ht, n \times S}_h.$
  \end{description}
  These form a total of six unbroken and five broken formulations,
  counting the already discussed broken and unbroken electric primal
  formulation. To analyze the remaining formulations, let us begin by
  verifying Assumption~\ref{asm:A0} for all the unbroken formulations.
  To this end, label the statement of Assumption~\ref{asm:A0} with
  $b_0$ set to the above-defined $b_0^I$ as ``$(I)$'' for all
  $I \in \{E,H,S,U,D,M\}$.  Then (analogous to the equivalences in
  Figure~\ref{fig:impl} for the elliptic example) we now have
  equivalence of statements $(D), (E), \ldots$ as proved next.

  \begin{theorem}  \label{thm:MaxwellCycles}
    The following implications hold:
    \[
    \begin{tikzpicture}[
        to/.style={->, >=latex', semithick, 
          double,double distance=2pt,    
        },
        every node/.style={align=center}
        ]
        

        \matrix[row sep=6mm,column sep=8mm] {
          & \node (H) { $(H)$ }; & \node (D) { $(D)$ }; 
          \\
          \node (S) { $(S)$ }; & & & \node (U) {$(U)$};
          \\ 
          & \node (E) {$(E)$};  &  \node (M) {$(M)$};
          \\
      };

        \draw[to] (E) -- (S) ;
        \draw[to] (S) -- (U) ;
        \draw[to] (U) -- (D) ;
        \draw[to] (D) -- (H) ;
        \draw[to] (H) -- (S) ;
        \draw[to] (U) -- (M) ;
        \draw[to] (M) -- (E) ;

      \end{tikzpicture}
      \]
  \end{theorem}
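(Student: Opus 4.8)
The plan is to recast each inf-sup statement $(I)$ as a bounded-below condition on the operator $B_I$ induced by the form $b_0^I$, and then to establish the displayed cycle by combining elementary integration-by-parts transfers with one decisive use of the closed range theorem. Throughout I would use that $A$ and $A^*$ are closed, densely defined, and injective (as noted before the statement), and that on $\dom(A)$ and $\dom(A^*)$ the graph norms are equivalent to the $\Hcurl\om$-norms used in the trial and test spaces. In this language the unifying observation is that every $(I)$ is equivalent to $A$ (equivalently $A^*$) being boundedly invertible; the cycle is one way to organize the proof of these equivalences.

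For the reductions to the strong form I would argue by elimination. To prove $(E)\Rightarrow(S)$, take $(H,E)\in\dom(A)$, set $(f,g)=A(H,E)$, so that $H=(\ii\og\mu)^{-1}(\curl E+f)$ and $E$ solves the reduced second-order equation $\curl\mu^{-1}\curl E-\og^2\eps E=\ii\og\,g-\curl(\mu^{-1}f)$ weakly on $\Hcurlo\om$; the right-hand side is a functional on $\Hcurlo\om$ of norm $\le C\|(f,g)\|_{L^2}$ (the term $\curl(\mu^{-1}f)$ being interpreted by integration by parts). The inf-sup $(E)$ then bounds $\|E\|_{\Hcurl\om}$, after which $\|H\|_{L^2}$ and $\|\curl H\|_{L^2}$ are read off directly from the two Maxwell equations, giving $\|(H,E)\|_{\Hcurl\om}\le C\|A(H,E)\|_{L^2}$, i.e. $(S)$. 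The implication $(H)\Rightarrow(S)$ is identical with $E$ and $H$ interchanged, and $(D)\Rightarrow(H)$, $(M)\Rightarrow(E)$, $(U)\Rightarrow(D)$, $(U)\Rightarrow(M)$ follow the same template: the target form differs from the source only by integrating one of the two equations by parts, so for a trial function in the (more regular) target trial space I would rewrite $b_0^{\text{target}}$ by integration by parts to expose the source form, apply the source inf-sup, and recover the remaining less-regular component from the strongly imposed equation.

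The crux is $(S)\Rightarrow(U)$, which genuinely exchanges $A$ for $A^*$ and is where the closed range theorem enters. Writing the supremum explicitly, $(S)$ is precisely $\|Ax\|_{L^2}\ge c_0\|x\|_{\Hcurl\om}$ for all $x\in\dom(A)$, so $\ran(A)$ is closed; since $\ker(A^*)=\{0\}$, the closed range theorem gives $\ran(A)=(\ker A^*)^\perp=L^2(\om)^6$, whence $A$ is bijective with bounded inverse and, by duality, so is $A^*$. Then for any $x\in L^2(\om)^6$, restricting the defining supremum $\|x\|_{L^2}=\sup_w|(x,w)|/\|w\|$ to $w=A^*y$ and using $\|y\|_{\Hcurl\om}\le\|(A^*)^{-1}\|\,\|w\|_{L^2}$ together with the surjectivity of $A^*$ yields $\sup_{y\in\dom(A^*)}|(x,A^*y)|/\|y\|_{\Hcurl\om}\ge\|(A^*)^{-1}\|^{-1}\|x\|_{L^2}$, which is exactly $(U)$. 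This single bijectivity fact, $A$ invertible $\iff A^*$ invertible, is what I would also feed into the crossing edges $(U)\Rightarrow(D)$ and $(U)\Rightarrow(M)$.

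I expect the main obstacle to be the bookkeeping in the integration-by-parts transfers of the second paragraph, where the trial and test spaces change simultaneously: one must check that the test function furnished by the source inf-sup either lies in, or can be replaced without loss of the bound by one in, the target test space, and that the boundary terms produced by integrating the electric or magnetic equation by parts vanish under the relevant boundary condition ($n\times E=0$ or $E_{\tp}=0$). A secondary point requiring care is the rigorous verification of the hypotheses feeding the closed range theorem — closedness and dense domain of $A$, injectivity of both $A$ and $A^*$, and the equivalence of the graph norm with the $\Hcurl\om$-norm — so that ``$A$ bounded below'' is literally Assumption~\ref{asm:A0} for $(S)$ and the duality argument delivers Assumption~\ref{asm:A0} for $(U)$.
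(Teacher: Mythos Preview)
Your proposal is correct and follows essentially the same architecture as the paper: the crux $(S)\Rightarrow(U)$ via the closed range theorem and the equality of an operator's norm with that of its dual, and the remaining edges by integration-by-parts transfers and elimination. One refinement: the four implications you group under ``the same template'' actually split into two distinct mechanisms in the paper. For $(U)\Rightarrow(D)$ (and $(U)\Rightarrow(M)$) the paper \emph{restricts the test space} to $W=\Hcurl\om\times\Hcurlo\om\subset Y_0^D$, on which $b_0^D=b_0^U$ by integration by parts, to obtain the $L^2$-part of the norm, and then recovers $\|\curl H\|$ separately by testing with $(0,S)$---exactly your ``recover the remaining component from the strongly imposed equation.'' For $(D)\Rightarrow(H)$ (and $(M)\Rightarrow(E)$), however, there is no integration by parts: given $H$, one \emph{introduces} the auxiliary variable $E=-(\ii\og\eps)^{-1}\curl H$ so that the strongly imposed equation is satisfied identically, whence $b_0^D((H,E),(R,S))=(\ii\og)^{-1}b_0^H(H,R)$ for all $(R,S)$, and the inf-sup $(D)$ applied to $(H,E)$ yields $(H)$. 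Your flagged ``bookkeeping obstacle'' is precisely this distinction; once you make it, the proof goes through as you outlined.
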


  \begin{proof}
    We begin with the most substantial of all the implications, which
    allows us to go from the strongest to the weakest formulation.
    When there can be no confusion, let us abbreviate Cartesian
    products of $L^2(\om)$ as simply $L$ and write $W$ for
    $\Hcurl \om \times \Hcurlo\om$. Clearly, $W$ is complete in the
    $\Hcurl\om^2$-norm. It is easy to see that the graph norms
    $(\| x \|_L^2 + \| A x \|_L^2)^{1/2}$ and
    $(\| x\|_L^2 + \| A^* x \|_L^2)^{1/2}$ are both equivalent to the
    $\Hcurl\om^2$-norm, so $W$ is a Hilbert space in any of these
    norms. These norm equivalences show that the inf-sup
    condition~$(S)$ holds if and only if
    \begin{equation}
      \label{eq:Abddbelow}
      C \| x \|_L \le \| A x\|_L
      \qquad \forall  x \in \dom(A).
    \end{equation}

    {\underline{$(S) \implies (U)$:}} The bound~\eqref{eq:Abddbelow} implied
    by $(S)$ shows that the range of $A$
    is closed.  By the closed range theorem for closed operators,
    range of $A^*$ is closed.  Since $A^*$ is also injective, it follows
    that
    \begin{equation}
      \label{eq:A'bddbelow}
      C \| y \|_L \le \| A^* y\|_L
      \qquad \forall  y \in \dom(A^*)      
    \end{equation}
    holds with the same constant as in~\eqref{eq:Abddbelow}.  This in
    turn implies that the following inf-sup condition holds:
    \[
    C \| y \|_W \le \sup_{x \in L} \frac{ |(x, A^*y)_\om| }{ \| x \|_L}
    \qquad \forall 
    y \in W.
    \]
    Thus, to complete the proof of $(U)$, it suffices to show that 
    \begin{equation}
      \label{eq:US}
      \inf_{x \in L} \sup_{y \in W} 
      \frac{ | (x, A^*y)_\om| }{  \| x \|_{L} \| y \|_W}
      =
      \inf_{y \in W} \sup_{x \in L}
      \frac{ | (x, A^*y)_\om| }{  \| x \|_{L} \| y \|_W}.
    \end{equation}
    For completeness, we now describe the standard argument that shows
    that one may reverse the order of inf and sup to
    prove~\eqref{eq:US}.  Viewing $A^*: W \to L$ as a bounded linear
    operator, we know that it is a bijection because
    of~\eqref{eq:Abddbelow} and~\eqref{eq:A'bddbelow}. Hence
    $(A^*)^{-1}: L \to W$ is bounded.  The right-hand side
    of~\eqref{eq:US} equals its operator norm $\| (A^*)^{-1} \|$. The
    left hand side of~\eqref{eq:US} equals the operator norm of the
    dual of $(A^*)^{-1}$ (considered as the dual operator of a
    continuous linear operator with $L$ as the pivot space identified
    to be the same as its dual space).  The norms of a continuous linear
    operator and its dual are equal, so~\eqref{eq:US} follows.

    \underline{$(U) \implies (D)$:} Let $x = (H,E)$ and $y = (R,S)$ be in
    $W=\Hcurl\om \times \Hcurlo\om$. Clearly, $W$ is contained
    in $Y_0^D = \Hcurl \om \times L^2(\om)^3$.  Because of the extra
    regularity of~$S$, we may integrate by parts the last term in the
    definition of $b_0^D( (H,E),(R,S))$ to get that
    $ b_0^D (x,y) = b_0^U(x,y).$ Hence using $(U)$, 
    \begin{align}\label{eq:24}
      \sup_{y \in Y_0^D}      
      \frac{ |b_0^D(x,y)|}{ \| y \|_{Y_0^D}}
      & \ge 
      \sup_{y \in W}
      \frac{ |b_0^D(x,y)|}{ \| y \|_{W}}
        = 
      \sup_{y \in W}
        \frac{ |b_0^U(x,y)|}{ \| y \|_{W}}
        \ge C \| x \|_{L}.
    \end{align}
    Thus, to finish the proof of $(D)$, we only need to control
    $\curl H$ using the last term of $b_0^D$.
    \begin{align*}
      \| \curl H \|_{L} 
      & 
      = 
        \sup_{S \in L} 
        \frac{|(\curl H, S)_\om|}{\| S \|_{L}}
      = 
        \sup_{S \in L} 
        \frac{|  b_0^D( (H,E), (0,S)) - (\ii\og\veps E, S) |}
        {\| S \|_{L}}
        \\
      & \le 
      \sup_{y \in Y_0^D}      
      \frac{ |b_0^D(x,y)|}{ \| y \|_{Y_0^D}}
        + \| \ii \og \veps E \|_{L}.
    \end{align*}
    Using~\eqref{eq:24} to bound the last term, the proof of $(D)$ is
    finished.

    \underline{$(D) \implies (H)$:} For any $H \in \Hcurl\om,$  set 
    $\ell_H(R) = (\eps^{-1} \curl H, \curl R)_\om - \og^2 (\mu H,
    R)_\om.$ We need to prove $(H),$ which  is equivalent to 
    \begin{equation}
      \label{eq:25}
      \| H \|_{\Hcurl\om} \le C \| \ell_H \|_{\Hcurl\om'}.
    \end{equation}
    Introducing a new variable $E = -(\ii \og \eps)^{-1} \curl H$, we
    find that
    $(\ii\og)^{-1} \ell_H(R) = -(E, \curl R)_\om + (\ii \og \mu H,
    R)_\om.$
    Hence 
    \[
    b_0^D ((H,E), (R,S)) = (\ii\og)^{-1} \ell_H(R)
    \qquad \forall 
    (R,S) \in \Hcurl \om \times L^2(\om)^3.
    \] 
    Hence~\eqref{eq:25} immediately follows from~$(D)$.

    \underline{$(H) \implies (S)$:} To prove the inf-sup condition
    $(S)$, it is enough to prove~\eqref{eq:Abddbelow} for all
    $ x = (H,E) \in W.$ Given any $F, G \in L^2(\om)^3,$ the equation
    $Ax = (F,G)$ is the same as the system
    \begin{subequations}
    \begin{align}
      \label{eq:27}
      \ii \og \mu  H - \curl E & = F,
      \\ \label{eq:28}
      \ii \og \eps E + \curl H & = G.
    \end{align}      
    \end{subequations}
    We multiply~\eqref{eq:28} by the conjugate of $\eps^{-1} \curl R$,
    for some $R \in \Hcurl\om$ and integrate by parts, while we
    multiply~\eqref{eq:27} by $-\ii \og R$ and solely
    integrate. The result is
    \begin{align*}
      (\eps^{-1} \curl H, \curl R)_\om + (\ii \og \curl E, R)_\om 
      & = (G, \veps^{-1} \curl R)_\om,
      \\
      -(\og^2 \mu H,  R)_\om 
      - (\ii \og\curl E, R)_\om & = -(F, \ii \og R)_\om.
    \end{align*}
    Adding the above two equations together, 
    we get the primal form $b_0^H(H,R) = \ell(R)$ where
    $\ell(R) = (G, \veps^{-1} \curl R)_\om  -(F, \ii \og R)_\om.$
    Hence the given inf-sup condition $(H)$ implies
    \[
    C \| H \|_{\Hcurl\om}
    \le \sup_{R \in \Hcurl\om} 
    \frac{|b_0^H(H,R)|}
    {\hspace{0.5cm}\|R \|_{\Hcurl\om} \hspace{-0.5cm}} \hspace{0.5cm}
    =  \sup_{R \in \Hcurl\om} 
    \frac{| \ell(R)| }
    {\hspace{0.5cm}\|R \|_{\Hcurl\om} \hspace{-0.5cm}}.
    \]
    Since
    $|\ell(R)| \le C ( \| F \|_{L} + \| G \|_{L}) \| R \|_{\Hcurl
      \om}$,
    this provides the required bound for $\| H \|_{L}$.  Since
    $\curl H$ is also bounded, equation~\eqref{eq:28} yields a bound
    for $\|E \|_{L}$. Combining these bounds,~\eqref{eq:Abddbelow} follows.

    To conclude the proof of the theorem, we note that the proofs
    of the implications $(U) \implies (M)$, $(M) \implies (S)$,
    $(E) \implies (S)$ are similar to the proofs of
    $(U) \implies (D)$, $(D) \implies (H)$, and $(H) \implies (S)$,
    respectively.
  \end{proof}

  Theorem~\ref{thm:MaxwellCycles} verifies Assumption~\ref{asm:A0} for
  all the formulations because we know from~\eqref{eq:19} that~$(E)$
  holds. Assumption~\ref{asm:hybrid} can be easily verified for all
  the broken formulations using
  Theorem~\ref{thm:duality}. Assumption~\ref{asm:pi} can be verified
  using Theorem~\ref{thm:Fortin}. Hence convergence rate estimates
  like in Corollary~\ref{cor:primalMaxwell} can be derived for each of
  the broken formulations. We omit the repetitive details.

\section{{Numerical studies}}  \label{sec:numer}

In this section, we present some numerical studies focusing on the
Maxwell example. Numerical results for other examples, including the
diffusion-convection-reaction example, can be found
elsewhere~\cite{ChanHeuerBui-T14,DemkoHeuer13}. The numerical studies
are not aimed at verifying the already proved convergence results, but
rather at investigations of the performance of the DPG method beyond
the limited range of applicability permitted by the theorems.  All
numerical examples presented in this section have been obtained with
$hp3d$, a 3D finite element code supporting anisotropic~$h$ and $p$
refinements and solution of multi-physics problems involving variables
discretized compatibly with the $H^1(\om)$-$\Hcurl\om$-$\Hdiv\om$
exact sequence of spaces.  The code has recently been equipped with a
complete family of orientation embedded shape functions for elements
of many shapes~\cite{FuentKeithDemko15}. The remainder of this section
is divided into results from two numerical examples.

\begin{example}[Smooth solution]
  We numerically solve the time-harmonic Maxwell equations setting
  material data to
  $$
  \epsilon = \mu = 1,
  \quad \omega = 1,
  $$
  and $\om$ to the unit cube.  To obtain $\oh$, the unit cube was
  partitioned first into five tetrahedra: four similar ones adjacent
  to the faces of the cube, and a fifth inside of the cube. We have
  used the refinement strategy of~\cite{KrizeStrou97} to generate a
  sequence of successive uniform refinements.  On these meshes,
  consider the primal DPG method for $E$, described
  by~\eqref{eq:dpgmaxwellcavity}, with data set so that the exact
  solution is the following smooth function.
$$
E_1 = \sin \pi x_1 \, \sin \pi x_2\, \sin \pi x_3,\quad E_2=E_3 = 0 \, .
$$
Instead of the pair of discrete spaces~\eqref{eq:MaxwellXYh} that we
know is guaranteed to work by our theoretical results, we experiment
with these discrete spaces:
\begin{subequations}
  \label{eq:XYexpt}
\begin{align}
  X_h
  & = \{ ( E_h, n \times \hat H_h) \in 
    \Hcurlo\om \times \Hhdiv{\d\oh} :\;
        n \times \hat H_h|_{\d K}  \in \trcK N_{p}(K), 
  \\ \nonumber 
  & \hspace{5.3cm} \text{ and }   
    E_h|_K \in N_{p}(K) \text{ for all } K \in \oh \}, 
  \\ 
  Y_h & = \{ F_h \in \Hcurl\oh:  
        F_h|_K \in N_{p+2}(K) \text{ for all } K \in \oh \}.
\end{align}  
\end{subequations}

\begin{figure}
\centering
\begin{subfigure}{\textwidth}
  \includegraphics[width=\textwidth]{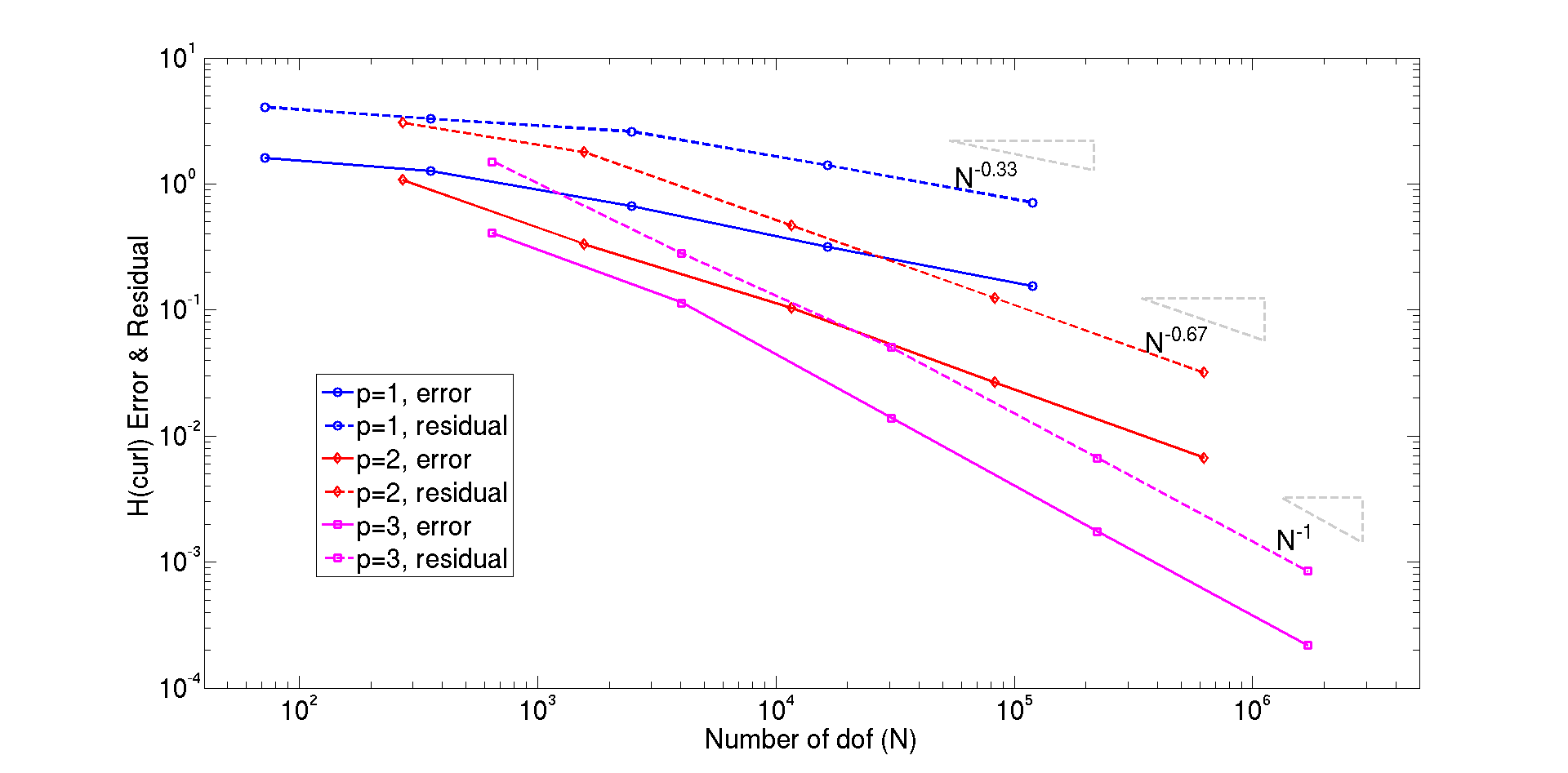}
  \caption{Tetrahedral meshes.}
  \label{fig:smooth_primal_rates_tet}
\end{subfigure}
\begin{subfigure}{\textwidth}
  \includegraphics[width=\textwidth]{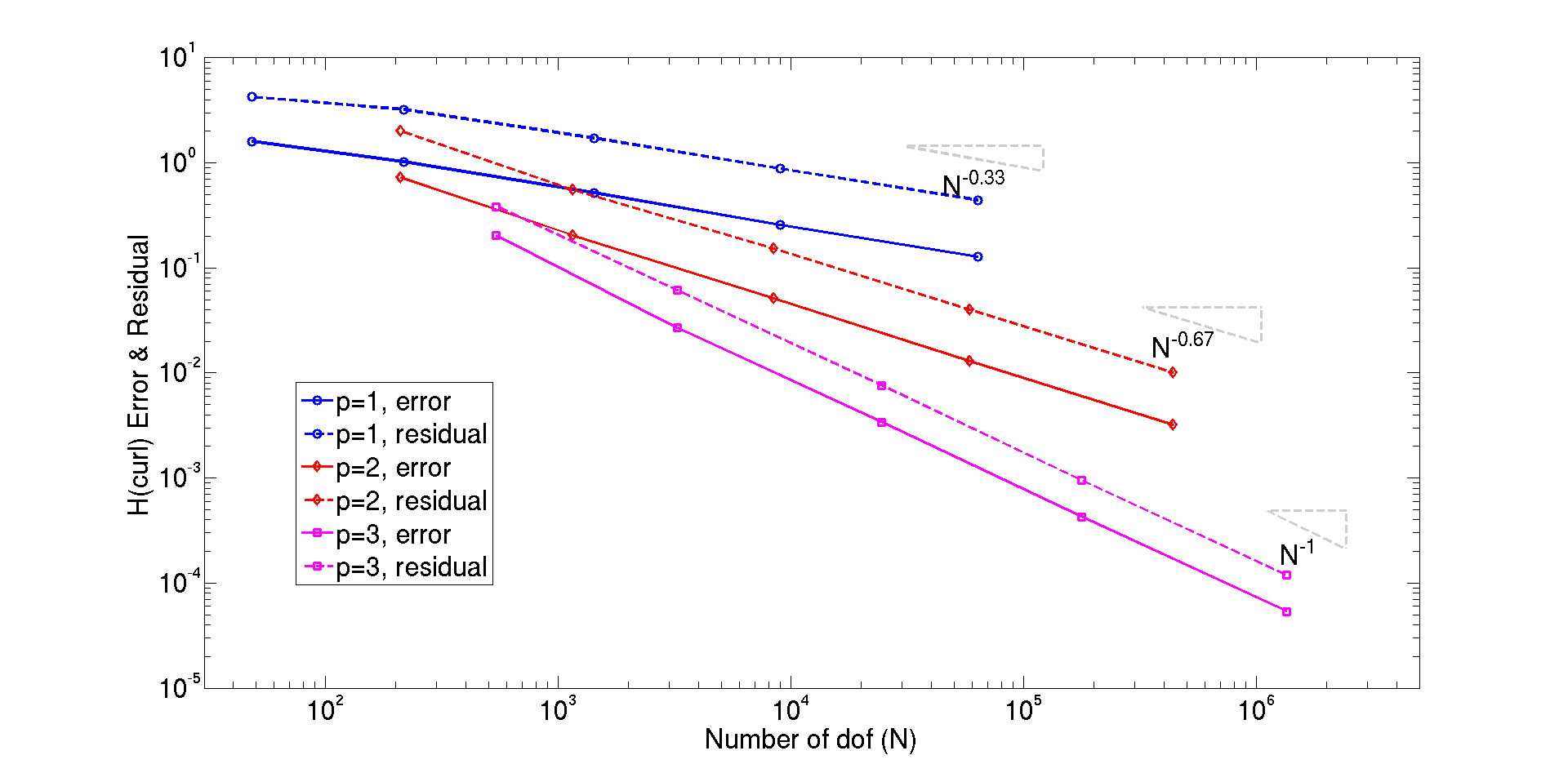}
  \caption{Hexahedral meshes.}
  \label{fig:smooth_primal_rates_hex}
\end{subfigure}
\caption{Rates for the case of smooth solution and 
  primal formulation.}
\label{fig:smooth_primal_rates}
\end{figure}

\begin{figure}
\centering
\begin{subfigure}{\textwidth}
  \includegraphics[width=\textwidth]{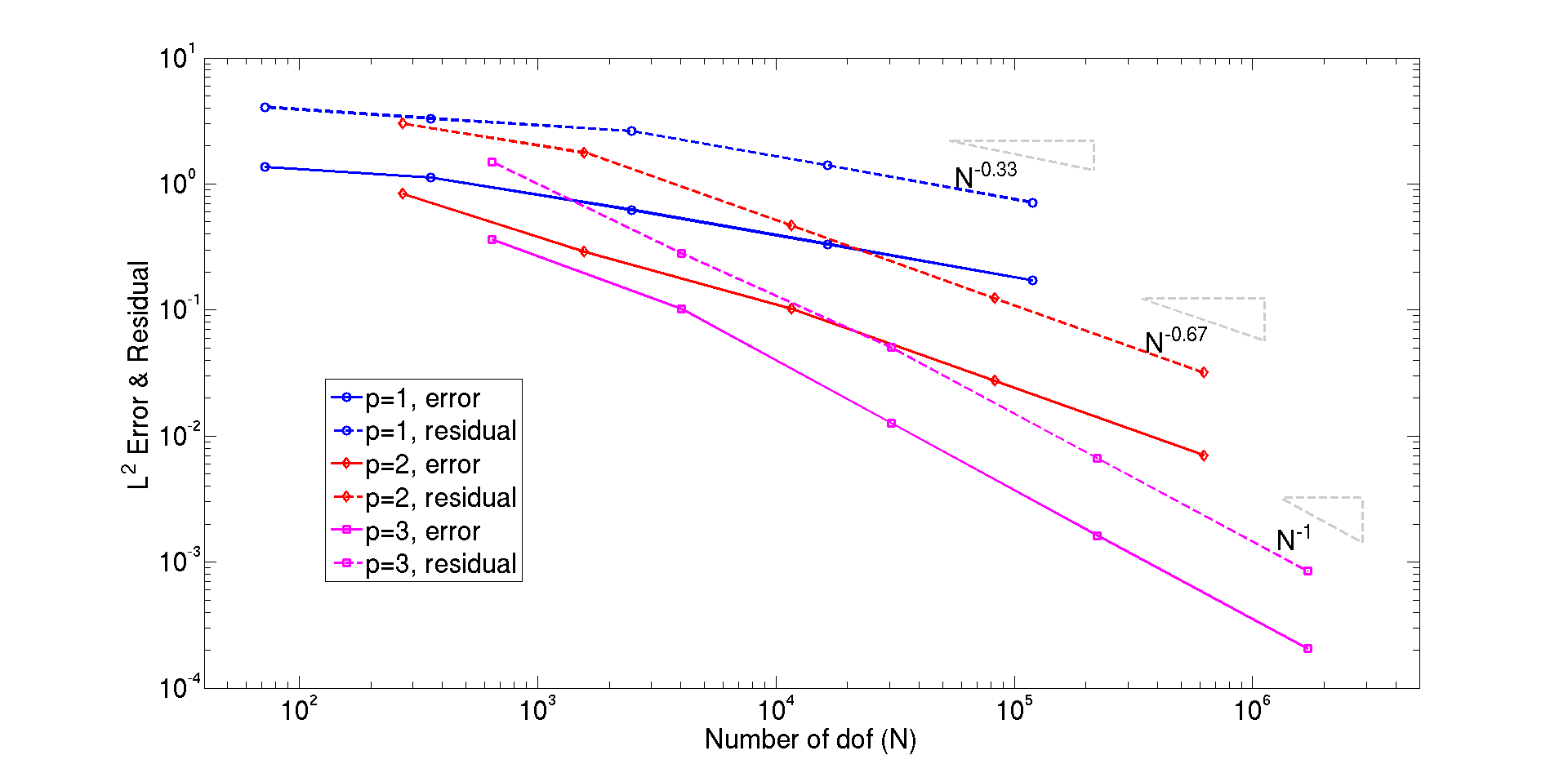}
  \caption{Tetrahedral meshes.}
  \label{fig:smooth_uw_rates_tet}
\end{subfigure}
\begin{subfigure}{\textwidth}
  \includegraphics[width=\textwidth]{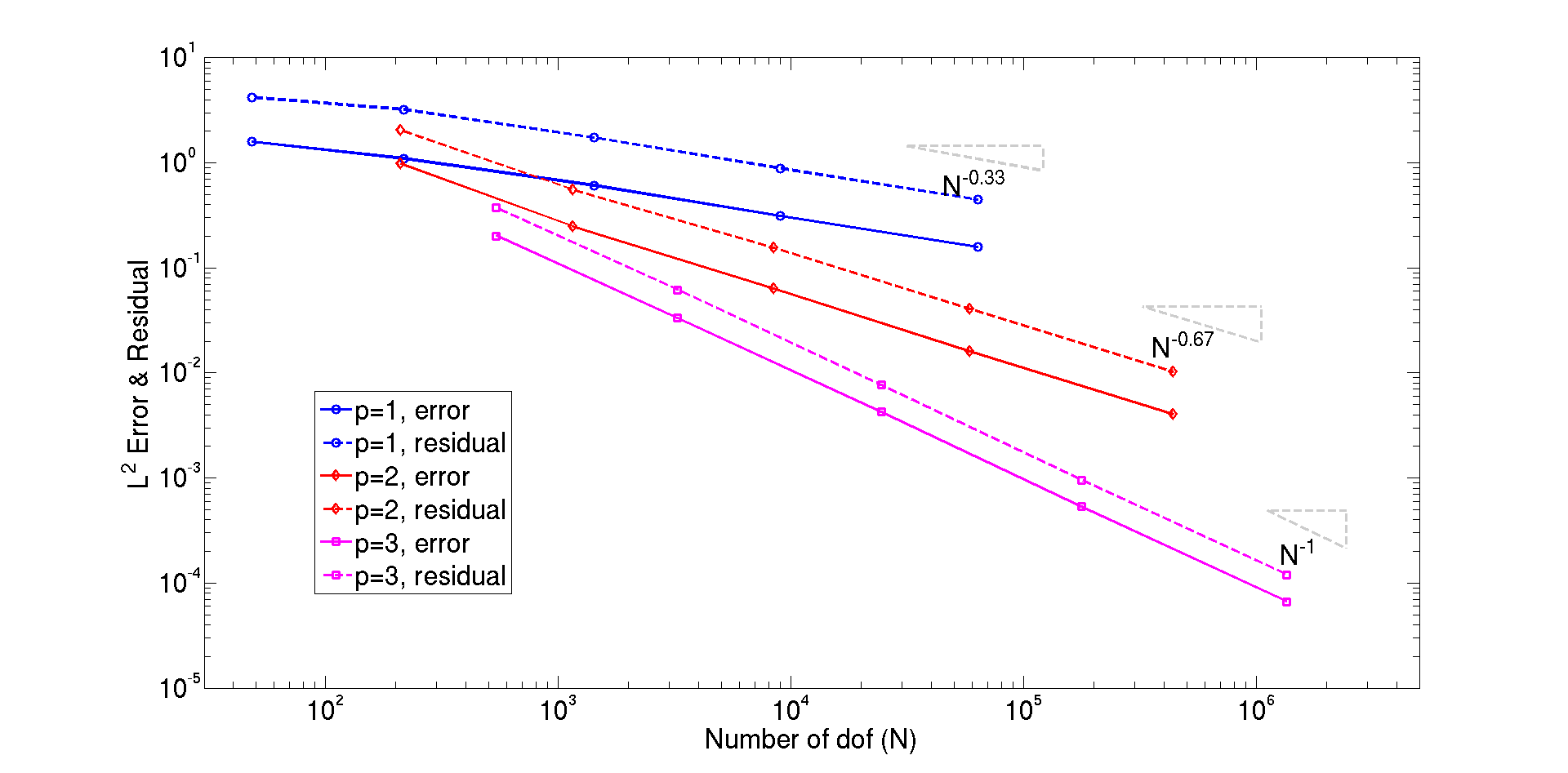}
  \caption{Hexahedral meshes.}
  \label{fig:smooth_uw_rates_hex}
\end{subfigure}
\caption{Rates for the case of smooth solution and 
  ultraweak formulation.}
\label{fig:smooth_uw_rates}
\end{figure}

The observed rates of convergence of the error
$\| E - E_h\|_{\Hcurl\oh}$ and the residual norm $\eta$ are shown \jrev{in
Figure~\ref{fig:smooth_primal_rates_tet}}. The rates are optimal. This
suggests that the results of Corollary~\ref{cor:primalMaxwell} may
hold with other choices of spaces. 
\jrev{Results  analogous to those in~\cite{BoumaGopalHarb14,CarstGalliHellw14}
that allow one to reduce the degree of the 
test space $Y_h$ while maintaining optimal convergence rates are
currently not known for the 
the Maxwell problem.}

We also present similar results obtained using cubic meshes using
$\Hcurl\om$-conforming \Nedelec\ hexahedron of the first type. Namely,
$X_h$ and $Y_h$ are set by~\eqref{eq:XYexpt} after revising $N_p(K)$
to $Q_{p-1,p,p} (K) \times Q_{p,p-1,p}(K) \times Q_{p,p,p-1}(K)$ where
$Q_{l,m,n}(K)$ denotes the set of polynomials of degree at most $l,m,$
and $n$ in the $x_1,x_2$ and $x_3$ directions, respectively. The
convergence rates reported in \jrev{Figure~\ref{fig:smooth_primal_rates_hex}}
are again optimal.

Before concluding this example, we also report convergence rates
obtained from the ultraweak formulation of~\eqref{eq:MaxUW}.  The
discrete spaces are now set by
\begin{subequations} \label{eq:XYexptUW}
\begin{align}
  X_h
  & = \{ ( E, H, \Et, \Ht) \in L^2(\om)^3 \times L^2(\om)^3 
    \times \Hhcurl{\d\oh} \times \Hhcurl{\d\oh} :\;
  \\ \nonumber 
  & \hspace{1.5cm}
    E|_K, H|_K \in P_{p-1}(K)^3, \;
    \Et|_{\d K}, \Ht|_{\d K}   \in \trpK N_{p}(K)
    \text{ for all } K \in \oh \}, 
  \\ 
  Y_h & = \{ (F, G) \in \Hcurl\oh \times \Hcurl\oh : \; 
        F|_K, G|_K \in N_{p+2}(K) \text{ for all } K \in \oh \}.
\end{align}  
\end{subequations}
Recall that the DPG computations require a specification of the
$Y$-norm. Using the observation (made in the proof of
Theorem~\ref{thm:MaxwellCycles}) that the adjoint graph norm is equivalent
to the natural norm in $\Hcurl\om^2$, we set 
\[
\Vert (E,H) \Vert_Y^2 = 
\sum_{K\in\oh} 
\left(
\| E \|_{L^2(\om)}^2 
+ \| H \|_{L^2(\om)}^2 
+ \| \ii \og \mu H - \curl E \|_{L^2(\om)}^2 
+ \|  \ii \og \eps E + \curl H \|_{L^2(\om)}^2
\right)
\]
in all computations involving the ultraweak formulation.  The results
reported in Figure~\ref{fig:smooth_uw_rates} again show optimal
convergence rates. Note that only the errors in the interior variables
$E$ and $H$ (in $L^2(\om)$-norm) are reported in the figure. To
compute errors in the interface variables, we must compute
approximations to fractional norms carefully
(see~\cite{CarstDemkoGopal14} for such computations in two
dimensions). Since the code does not yet have this capability in three
dimensions, we have not reported the errors in interface
variables.\hfill\qeg
\end{example}

\begin{example}[Singular solution]

  \begin{figure}
    \centering
    \includegraphics[width=.2\textwidth,angle=0]{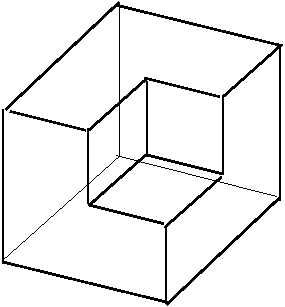}
    \hspace*{.1\textwidth}
    \includegraphics[width=.2\textwidth,angle=0]{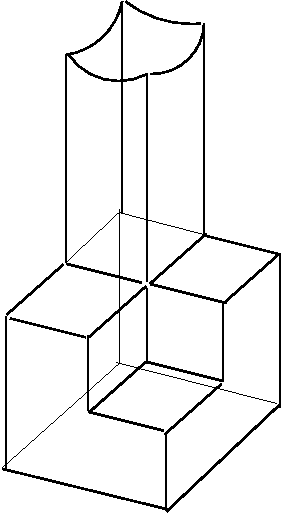}
    \caption{Construction of Fichera oven.}
    \label{fig:Fichera_microwave}
  \end{figure}
\begin{figure}
\centering
\includegraphics[width=0.18\textwidth]{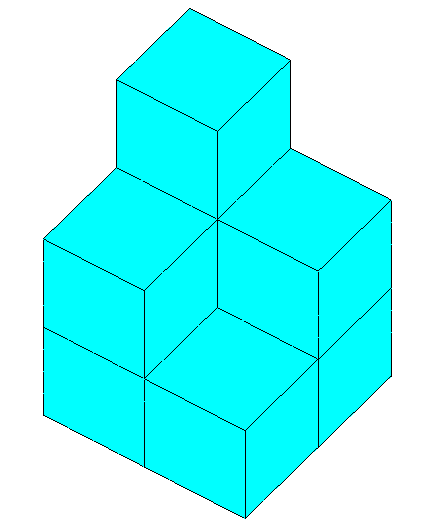}
\includegraphics[width=0.18\textwidth]{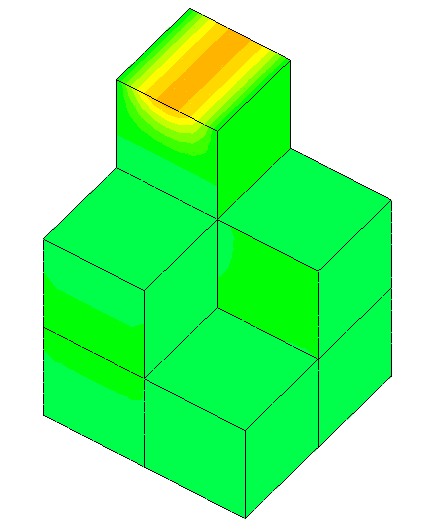}
\includegraphics[width=0.18\textwidth]{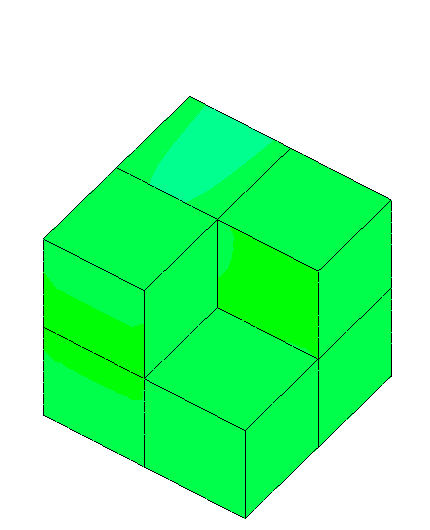}
\includegraphics[width=0.18\textwidth]{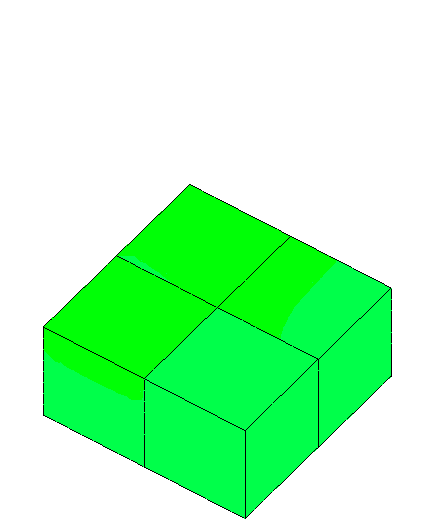}
\\
\includegraphics[width=0.18\textwidth]{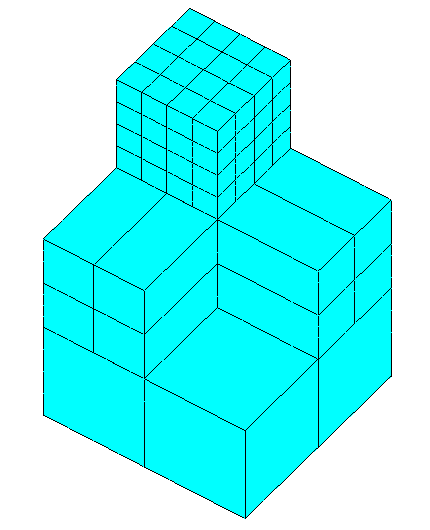}
\includegraphics[width=0.18\textwidth]{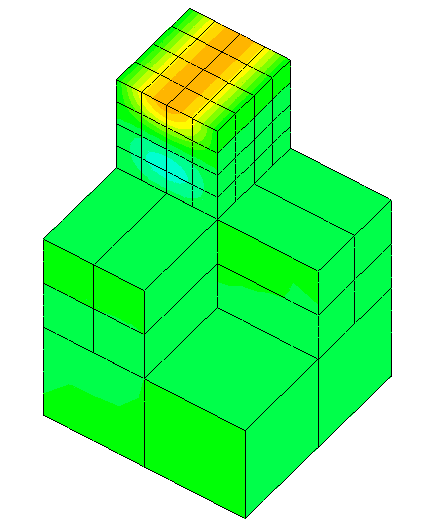}
\includegraphics[width=0.18\textwidth]{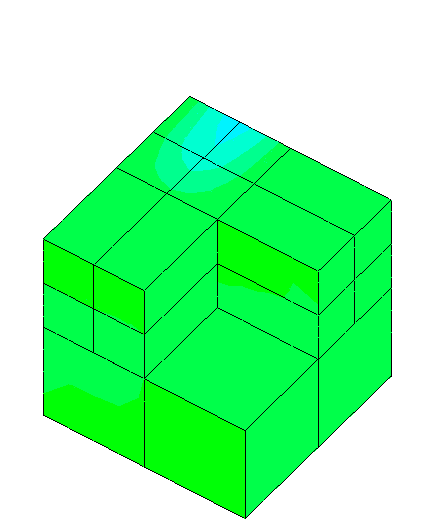}
\includegraphics[width=0.18\textwidth]{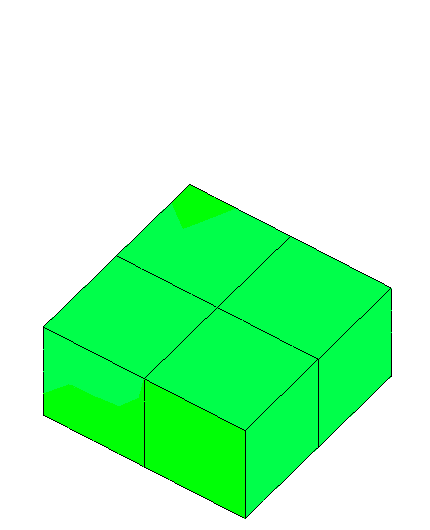}
\\
\includegraphics[width=0.18\textwidth]{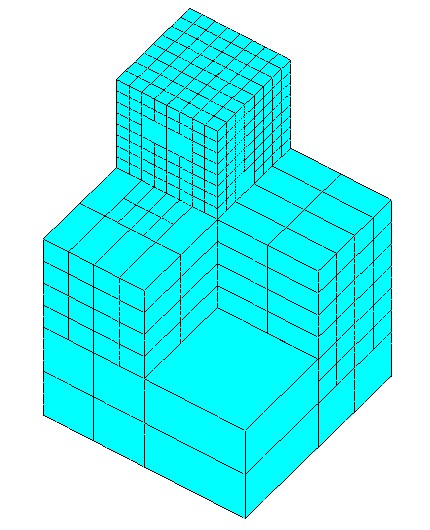}
\includegraphics[width=0.18\textwidth]{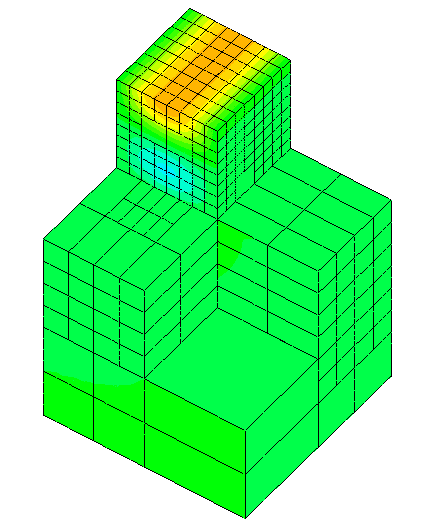}
\includegraphics[width=0.18\textwidth]{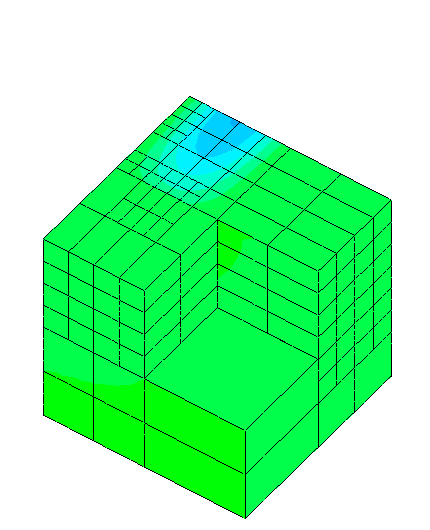}
\includegraphics[width=0.18\textwidth]{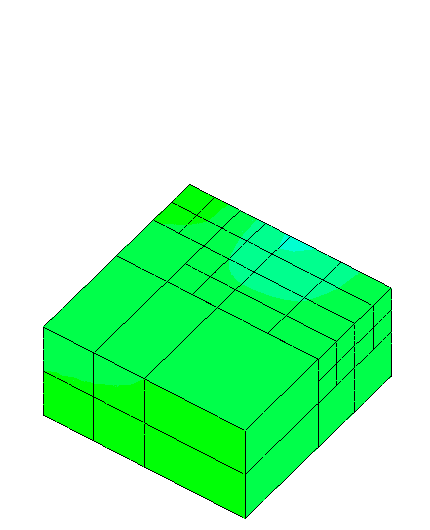}
\\
\includegraphics[width=0.18\textwidth]{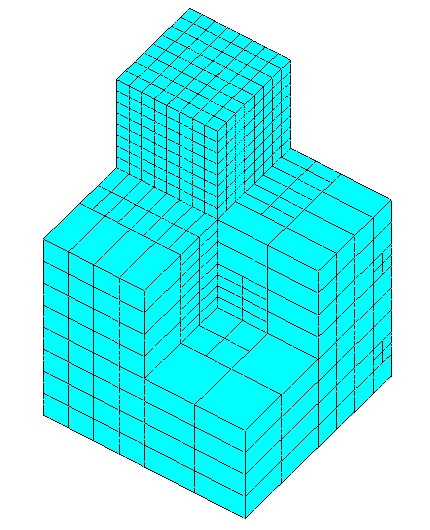}
\includegraphics[width=0.18\textwidth]{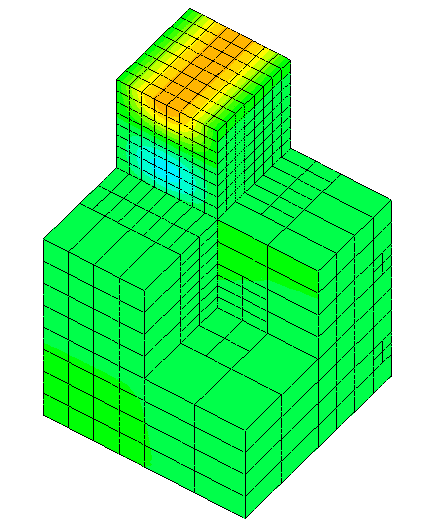}
\includegraphics[width=0.18\textwidth]{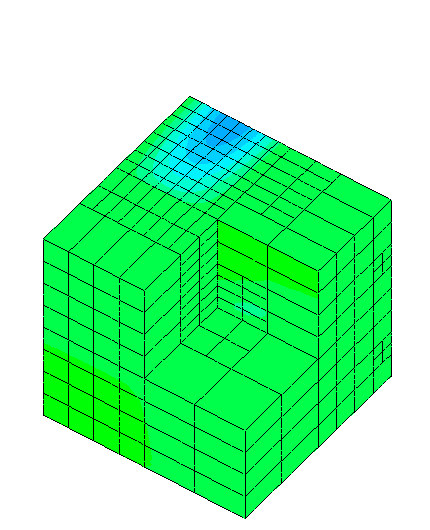}
\includegraphics[width=0.18\textwidth]{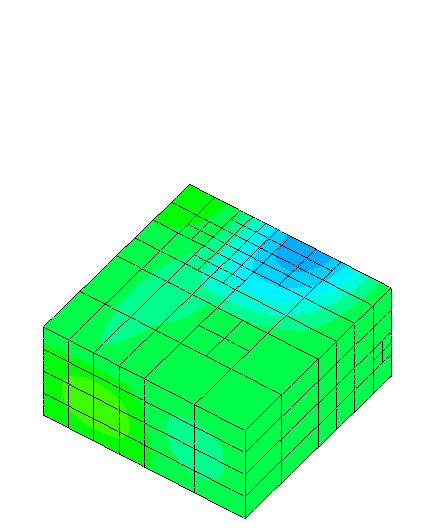}
\\
\includegraphics[width=0.18\textwidth]{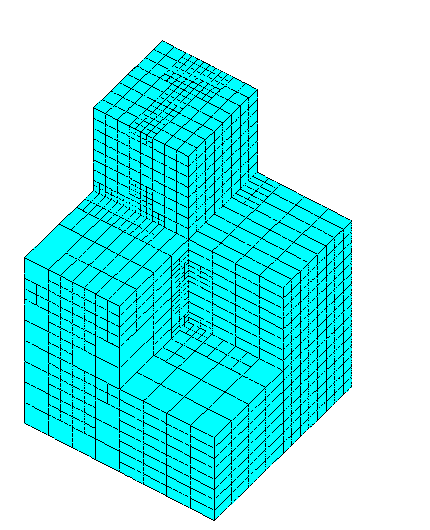}
\includegraphics[width=0.18\textwidth]{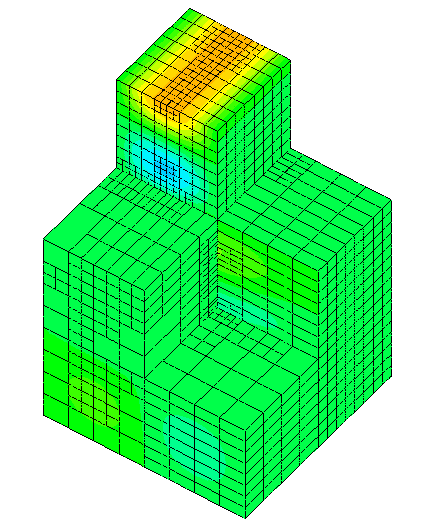}
\includegraphics[width=0.18\textwidth]{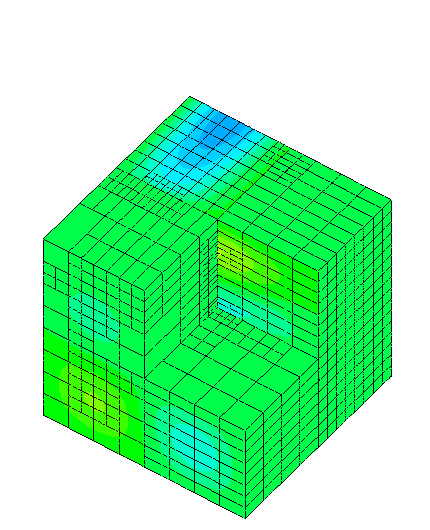}
\includegraphics[width=0.18\textwidth]{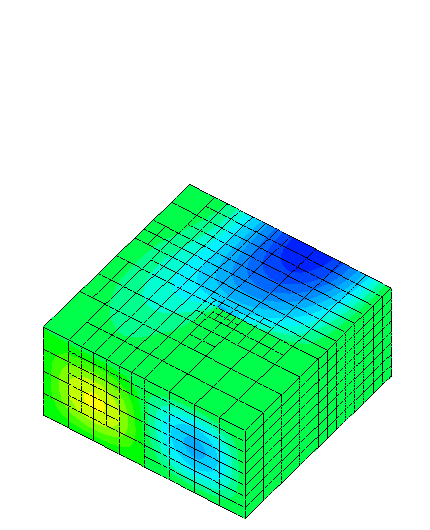}
\\
\includegraphics[width=0.18\textwidth]{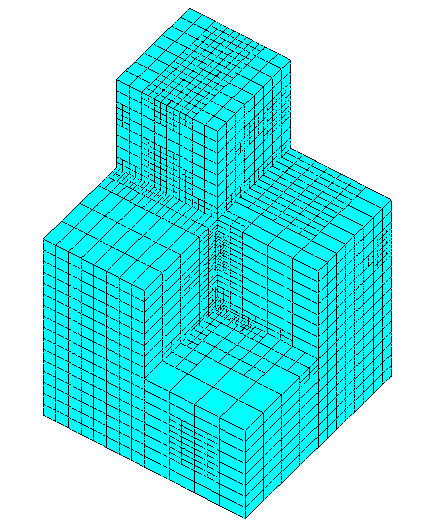}
\includegraphics[width=0.18\textwidth]{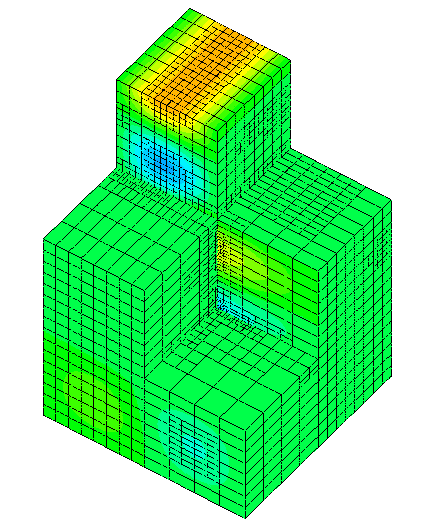}
\includegraphics[width=0.18\textwidth]{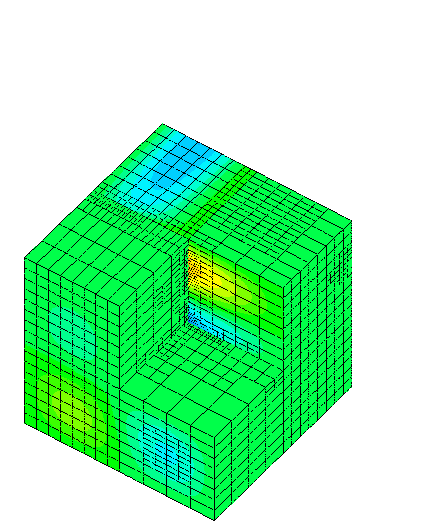}
\includegraphics[width=0.18\textwidth]{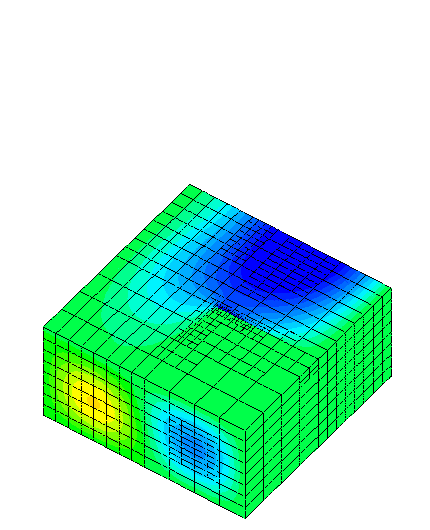}
\caption{Every other iterate (iterates 1,3,5,7,9 and 11) 
  from adaptive algorithm applied to solve the  Fichera
  oven problem with the primal formulation. Meshes (left) and the
  corresponding real part of $E_1$ are shown.}
\label{fig:microwave_primal}
\end{figure}
  To illustrate adaptive possibilities of DPG method and the
  difference between different variational formulations, we now
  present results from a ``Fichera oven'' problem. We start with with
  the standard domain with a Fichera corner obtained by refining a
  cube $(0,2)^3$ into eight congruent cubes and removing one of
  them. We then attach an infinite waveguide to the top of the oven
  and truncate it at a unit distance from the Fichera corner, as shown
  in Figure~\ref{fig:Fichera_microwave}.  Setting
$$
\epsilon = \mu = 1,\quad \omega = 5,
$$
we drive the problem with the first propagating waveguide mode,
$$
E^D_1 = \sin \pi x_2,\quad
E^D_2=E^D_3 = 0
$$
which is used for non-homogenous electric boundary condition
$n \times E = n \times E^D$ across the waveguide section.  Analogous
to a microwave oven model, we set the homogeneous perfect electric
boundary condition $n \times E = 0$ everywhere else on the boundary.
The above material data correspond to about 0.8 wavelengths per unit
domain.  In all the reported computations, we start with a uniform
mesh of eight quadratic elements that clearly does not even meet the
Nyquist criterion.  We expect the solution to develop strong
singularities at the reentrant corner and edges, but we do not know
the exact solution.

\begin{figure}
  \centering
  \begin{subfigure}{0.45\textwidth}
    \begin{tikzpicture}
      \begin{semilogxaxis}[
        xlabel=Degrees of freedom,
        ylabel=Residual norm $\eta$
        ]

        \addplot plot coordinates {
          (   1224 ,   0.16645E+01 )
          (   2054 ,   0.84174E+00 )
          (  10208 ,   0.63574E+00 )
          (  39348 ,   0.55354E+00 )
          (  77332 ,   0.50959E+00 )
          ( 110142 ,   0.47632E+00 )
          ( 150660 ,   0.44076E+00 )
          ( 245786 ,   0.39261E+00 )
          ( 358210 ,   0.36662E+00 )
          ( 708786 ,   0.33661E+00 )

        };


      \end{semilogxaxis}
    \end{tikzpicture}
    \caption{Adaptivity for primal formulation.}
    \label{fig:microwave_primal_residual}
  \end{subfigure}
  \qquad
  \begin{subfigure}{0.45\textwidth}
    \begin{tikzpicture}
      \begin{semilogxaxis}[
        xlabel=Degrees of freedom,
        ylabel=Residual norm $\eta$
        ]

        \addplot plot coordinates {

          (   1224  ,   0.58556E+00 ) 
          (   2054  ,   0.48856E+00 ) 
          (   10208 ,   0.38958E+00 ) 
          (   17778 ,   0.35059E+00 ) 
          (   35374 ,   0.31035E+00 ) 
          (   90568  ,  0.27091E+00 ) 
          (   168242 ,  0.24020E+00 ) 
          (   225176 ,  0.22554E+00 ) 
          (   339240 ,  0.20098E+00 ) 

        };


      \end{semilogxaxis}
    \end{tikzpicture}
    \caption{Adaptivity for ultraweak formulation.}
    \label{fig:microwave_ultraweak_residual}
  \end{subfigure}
  \caption{Convergence of residual during adaptive iterations for the Fichera oven.}
\end{figure}
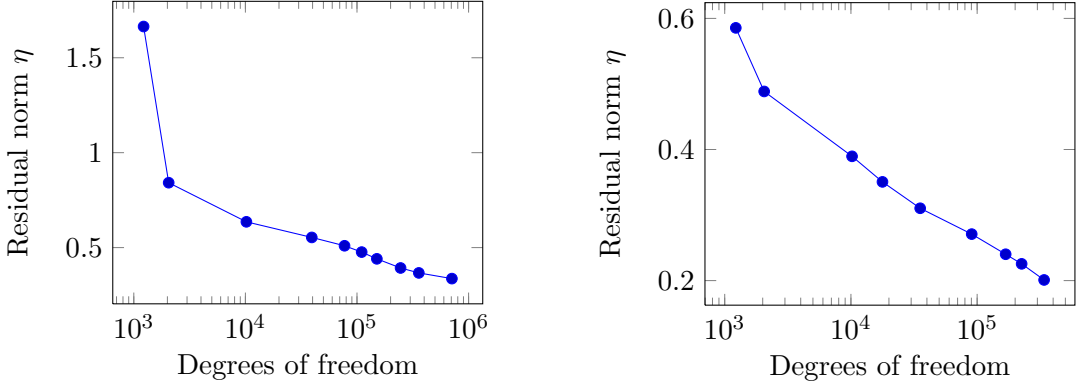

First, we report the results from the electric primal formulation,
choosing spaces again as in~\eqref{eq:XYexpt}.
Figure~\ref{fig:microwave_primal} presents the evolution of the mesh
along with the corresponding real part of the first component of
electric field $E_1$.  Since we do not have the exact solution for
this problem, we display convergence history using a plot of the
evolution of the computed residual $\eta$ in
Figure~\ref{fig:microwave_primal_residual}.  (Recall that theoretical
guidance on the similarity of behaviors of error estimator $\eta$ and
the error is provided by Theorem~\ref{thm:dpg}.) Clearly, the figure
shows the residual is being driven to zero during the adaptive
iteration.

Next, we solve the same problem using the ultraweak formulation with
the spaces set as in~\eqref{eq:XYexptUW} and the $Y$-norm set to the
adjoint graph norm as in the previous example.  The convergence
history of the residual norm $\eta$ is displayed in
Figure~\ref{fig:microwave_ultraweak_residual}. The evolution of the
mesh along with the real part of $E_1$ is illustrated in
Figure~\ref{fig:microwave_ultraweak}.

\begin{figure}
\centering
\includegraphics[width=0.19\textwidth]{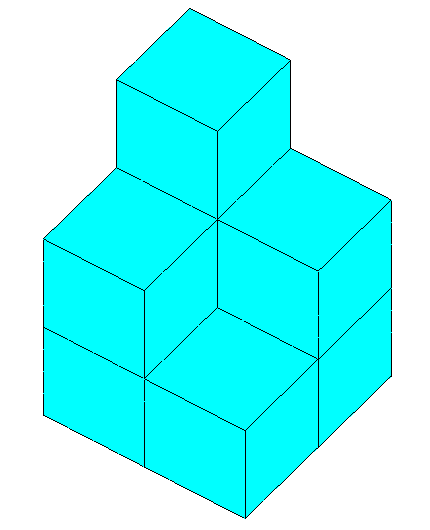}
\includegraphics[width=0.19\textwidth]{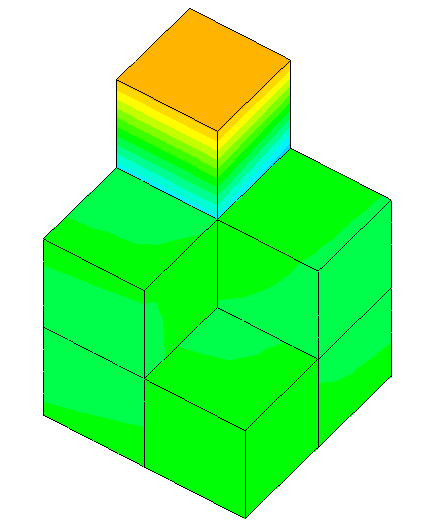}
\includegraphics[width=0.19\textwidth]{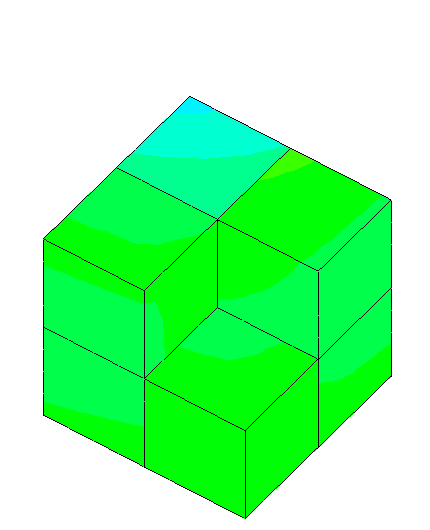}
\includegraphics[width=0.19\textwidth]{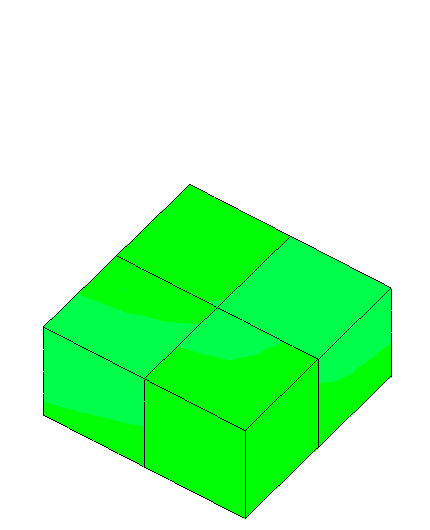}
\\
\includegraphics[width=0.19\textwidth]{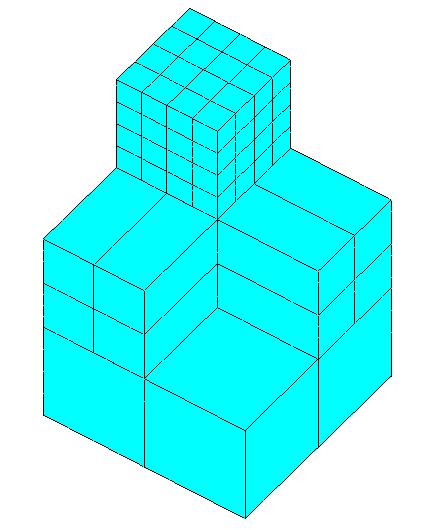}
\includegraphics[width=0.19\textwidth]{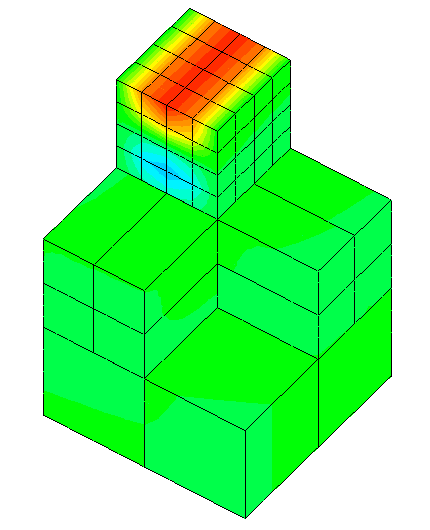}
\includegraphics[width=0.19\textwidth]{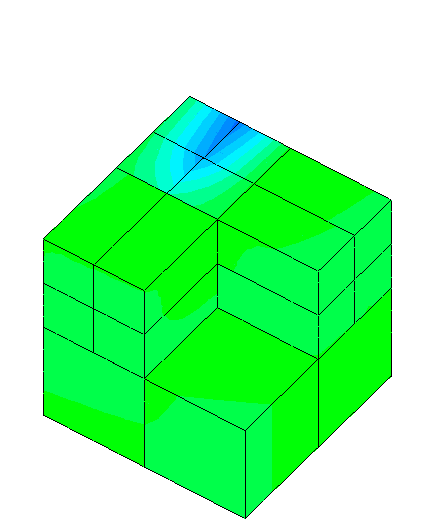}
\includegraphics[width=0.19\textwidth]{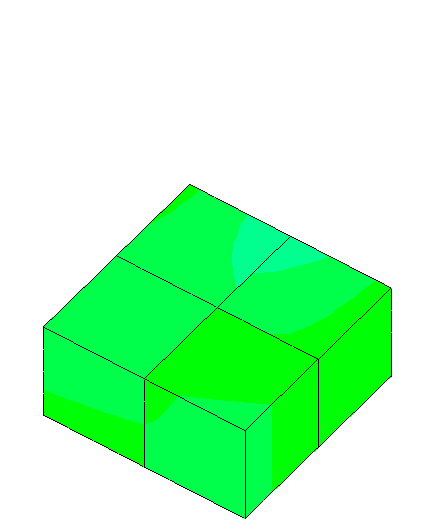}
\\
\includegraphics[width=0.19\textwidth]{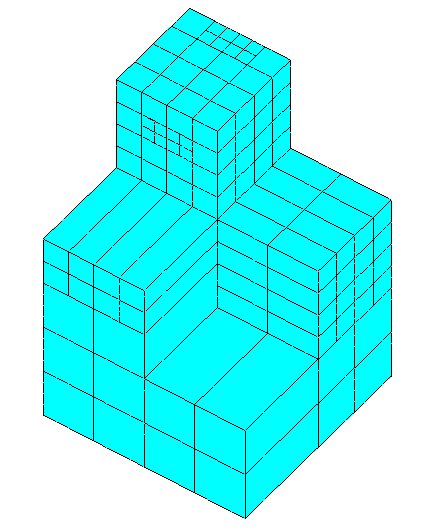}
\includegraphics[width=0.19\textwidth]{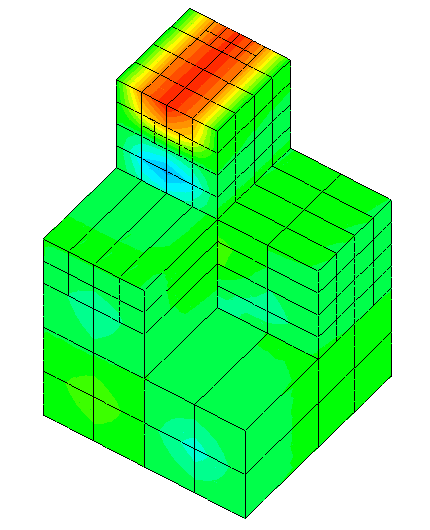}
\includegraphics[width=0.19\textwidth]{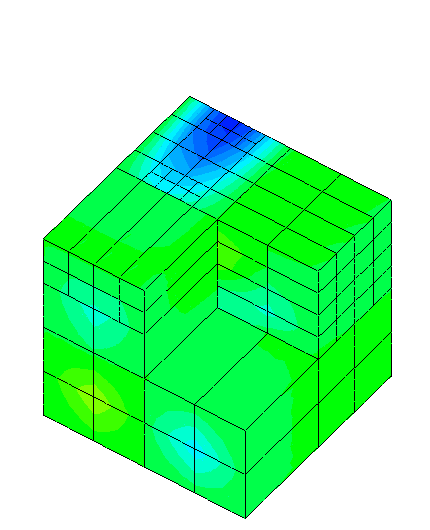}
\includegraphics[width=0.19\textwidth]{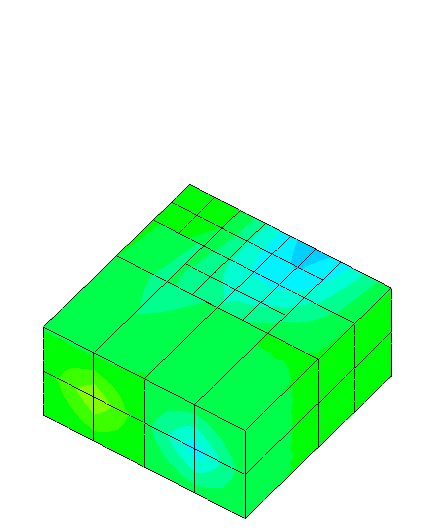}
\\
\includegraphics[width=0.19\textwidth]{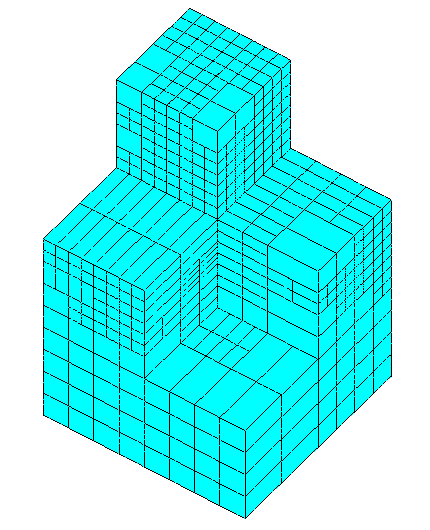}
\includegraphics[width=0.19\textwidth]{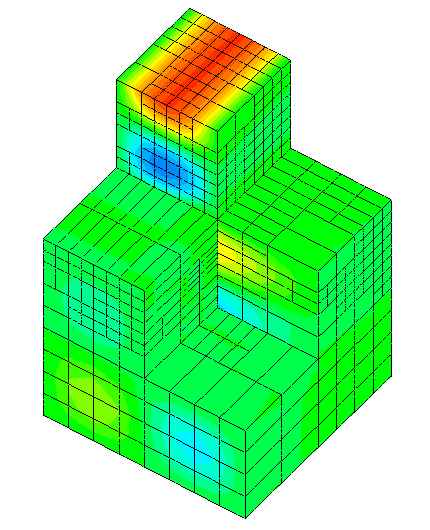}
\includegraphics[width=0.19\textwidth]{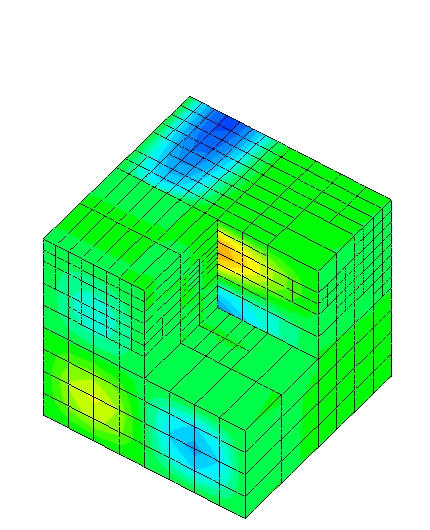}
\includegraphics[width=0.19\textwidth]{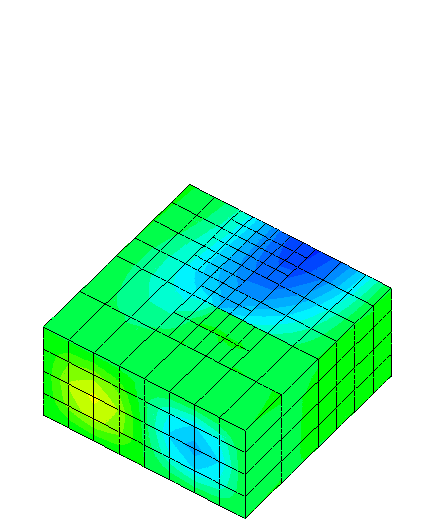}
\\
\includegraphics[width=0.19\textwidth]{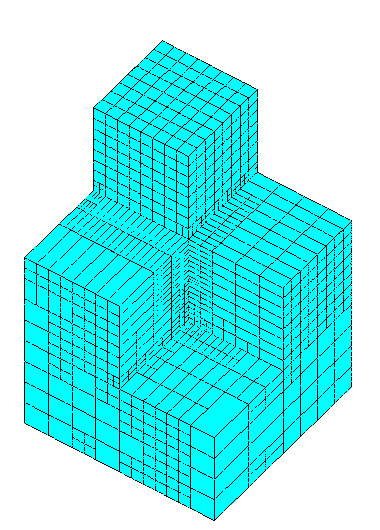}
\includegraphics[width=0.19\textwidth]{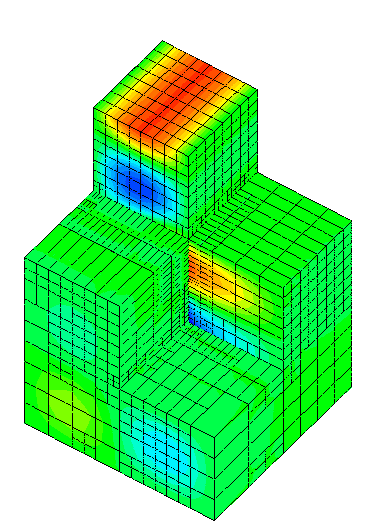}
\includegraphics[width=0.19\textwidth]{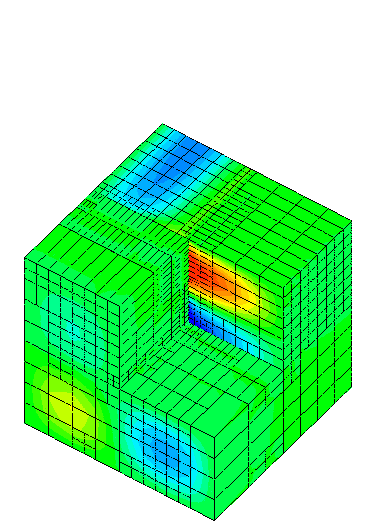}
\includegraphics[width=0.19\textwidth]{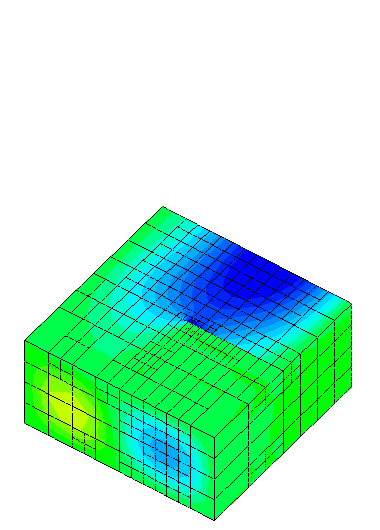}
\caption{Iterates 1,3,5,7, and 9 from adaptive algorithm for Fichera
  oven problem with the ultraweak formulation. Meshes (left) and the
  corresponding real part of $E_1$ are shown.}
\label{fig:microwave_ultraweak}
\end{figure}

\begin{figure}
\centering
\begin{tabular}{c|c}
\begin{subfigure}{0.49\textwidth}
  \includegraphics[width=0.45\textwidth]{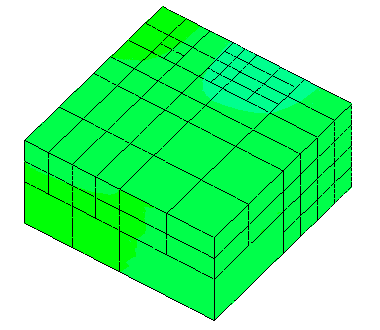}
  \includegraphics[width=0.45\textwidth]{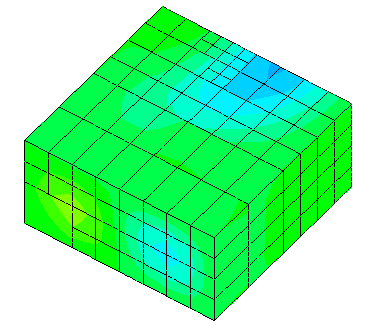}
  \caption{Primal (left) and ultraweak (right) iterate~6.}
\end{subfigure}
&
\begin{subfigure}{0.49\textwidth}
  \includegraphics[width=0.45\textwidth]{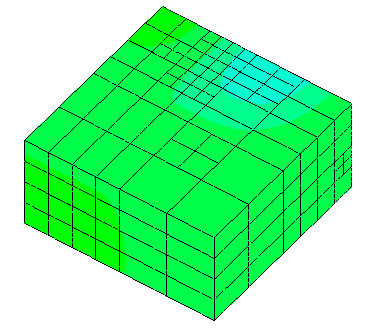}
  \includegraphics[width=0.45\textwidth]{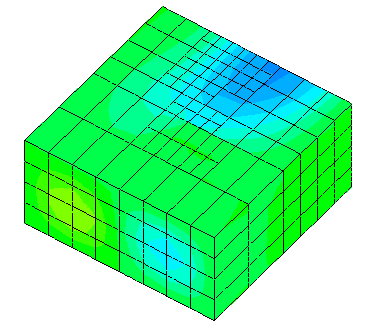}
  \caption{Primal (left) and ultraweak (right) iterate~7.}
\end{subfigure}
\\
& 
\\
\hline
\\ 
& 
\\
\begin{subfigure}{0.49\textwidth}
  \includegraphics[width=0.45\textwidth]{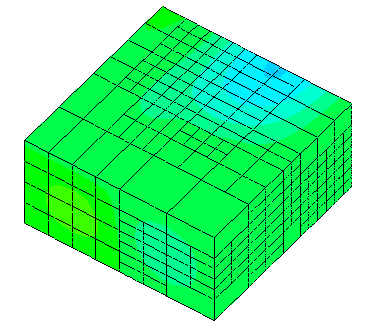}
  \includegraphics[width=0.45\textwidth]{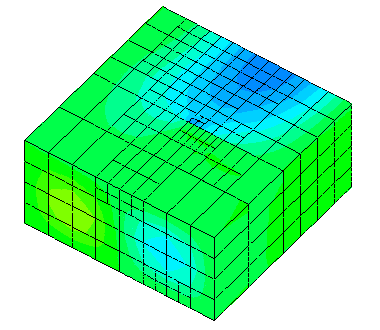}
  \caption{Primal (left) and ultraweak (right) iterate~8.}
\end{subfigure}
&
\begin{subfigure}{0.49\textwidth}
  \includegraphics[width=0.45\textwidth]{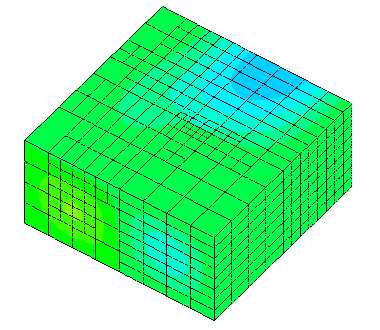}
  \includegraphics[width=0.45\textwidth]{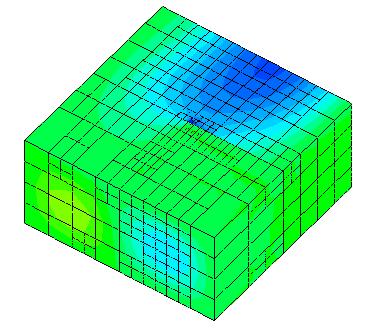}
  \caption{Primal (left) and ultraweak (right) iterate~9.}
\end{subfigure}
\end{tabular}
\caption{Same scale comparison of real part of $E_1$ computed by the
  primal and the ultraweak formulations. A middle slice (through
  singular vertex) of adaptive iterates 6,7,8 and 9 are shown.  }
\label{fig:comparison}
\end{figure}

It is illustrative to visualize the difference between the two
different DPG formulations and the accompanying convergence in
different norms. Figure~\ref{fig:comparison} presents a side-by-side
comparison of the real part of the electric field component $E_1$
obtained using the primal (left) and ultraweak (right) formulations.
The same color scale (min $= -1$, max $= 1$) is applied to both solutions
in this figure (whereas the scales of
Figures~\ref{fig:microwave_primal} and~\ref{fig:microwave_ultraweak}
are not identical).  Obviously, the meshes are different, but they are
of comparable size, so we believe the comparison is fair.  The primal
method, which delivers solution converging in the stronger
$\Hcurl\om$-norm, ``grows'' the unknown solution slower, whereas the
ultraweak formulation converging in the weaker $L^2(\om)$-norm seems
to capture the same solution features faster. Both methods ultimately
approximate the same solution but at different speeds and the ultraweak
formulation seems to be a winner. Recall that the number of interface
unknowns for both formulations is identical, but the total number of
unknowns for the ultraweak formulation is higher, i.e., the ultraweak
formulation requires a larger number of local (element-by-element)
computations.\hfill\qeg
\end{example}

\section{Conclusion}

\jrev{In addition to presenting the first analysis of DPG methods for
  Maxwell equations, we have presented a technique that considerably
  simplifies analysis of various DPG methods. The idea is to inherit
  the stability of broken formulations using the known stability of
  unbroken standard formulations and is the content of
  Theorem~\ref{thm:hybrid}. To obtain discrete stability for the
  Maxwell discretization, a new Fortin operator was constructed in
  Theorem~\ref{thm:Fortin}.}

\jrev{We have shown how certain duality identities (proved in
  Theorem~\ref{thm:duality}) can be used to verify a critical
  assumption (Assumption~\ref{asm:hybrid}) involving an interface
  inf-sup condition.  During this process, we have provided a 
  simple technique
  to prove duality identities 
(see \eqref{eq:sup2a} and Lemma~\ref{lem:duality})
  like
\[
    \Vert \hat{E}_{\dv} \Vert_{\Hhdiv{\ptl K}}
    = 
    \Vert \hat{E}_{\dv} \Vert_{ [\Hhcurl{\d K} ]^*},
\]
(where the norm on the right hand side is the norm  in 
the space dual to $\Hhcurl{\d K}$).} 

\jrev{Finally, the connection between the stability of weak and strong
  formulations was made precise in Theorem~\ref{thm:MaxwellCycles}:
  the wellposedness of one of the displayed 
  six formulations imply the
  wellposedness of all others. Notwithstanding this result, the
  numerical experiments clearly showed the practical differences in
  convergences among the formulations.}

\jrev{Before concluding, we mention a few limitations of our analysis
  and open issues. Convergence results explicit in the polynomial
  degree $p$ are not obtained by the currently known Fortin
  operators. While construction of local Fortin operators provides one
  way to prove discrete stability, other avenues to reach the same
  goal (such as the analysis of~\cite{HeuerKarkuSayas14} assuming
  higher regularity, or the analysis of~\cite{BoumaGopalHarb14}
  extending the Strang lemma) may prove important.  Our analysis did
  not track the dependence on the wavenumber $\og$. 
  More complex techniques are likely to be needed for such
  parameter tracking~\cite{DemkoGopalMuga11a}, including in the
  unrelated important examples of advective singular perturbation
  problems~\cite{ChanHeuerBui-T14,CohenDahmeWelpe12,DemkoHeuer13}.}


\begin{thebibliography}{50}

\bibitem{Babus70a}
{\sc I.~Babu{\v{s}}ka}, {\em Error-bounds for finite element method}, Numer.
  Math., 16 (1970/1971), pp.~322--333.

\bibitem{BarreMorto84}
{\sc J.~W. Barrett and K.~W. Morton}, {\em {Approximate symmetrization and
  Petrov-Galerkin methods for diffusion-convection problems}}, {Computer
  Methods in Applied Mechanics and Engineering}, {45} ({1984}), pp.~{97--122}.

\bibitem{BoumaGopalHarb14}
{\sc T.~Bouma, J.~Gopalakrishnan, and A.~Harb}, {\em Convergence rates of the
  {DPG} method with reduced test space degree}, Computers and Mathematics with
  Applications, 68 (2014), pp.~1550--1561.

\bibitem{Braes07}
{\sc D.~Braess}, {\em Finite elements}, Cambridge University Press, Cambridge,
  third~ed., 2007.
\newblock Theory, fast solvers, and applications in elasticity theory.
Translated from the German by Larry L. Schumaker.

\bibitem{BrezzForti91}
{\sc F.~Brezzi and M.~Fortin}, {\em {Mixed and Hybrid Finite Element Methods}},
  no.~15 in {Springer Series in Computational Mathematics}, Springer-Verlag,
  New York, 1991.

\bibitem{BroerSteve14}
{\sc D.~Broersen and R.~Stevenson}, {\em A {Petrov-Galerkin} discretization
  with optimal test space of a mild-weak formulation of convection-diffusion
  equations in mixed form}, {IMA} J. Numer. Anal.,  (2014 (to appear)).


\bibitem{CarstDemkoGopal14}
{\sc C.~Carstensen, L.~Demkowicz, and J.~Gopalakrishnan}, {\em A posteriori
  error control for {DPG} methods}, SIAM J Numer. Anal., 52 (2014),
  pp.~1335--1353.

\bibitem{CarstGalliHellw14}
{\sc C.~Carstensen, D.~Gallistl, F.~Hellwig, and L.~Weggler}, {\em Low-order
  {dPG-FEM} for an elliptic {PDE}}, Computers \& Mathematics with Applications,
  68 (2014), pp.~1503 -- 1512.


\bibitem{CausiSacco05}
{\sc P.~Causin and R.~Sacco}, {\em A discontinuous Petrov-Galerkin method
  with Lagrangian multipliers for second order elliptic problems}, SIAM J.
  Numer. Anal., 43 (2005), pp.~280--302 (electronic).

\bibitem{ChanHeuerBui-T14}
{\sc J.~Chan, N.~Heuer, T.~Bui-Thanh, and L.~Demkowicz}, {\em A robust {DPG}
  method for convection-dominated diffusion problems {II}: adjoint boundary
  conditions and mesh-dependent test norms}, Comput. Math. Appl., 67 (2014),
  pp.~771--795.


\bibitem{CockbGopal04}
{\sc B.~Cockburn and J.~Gopalakrishnan}, {\em A characterization of hybridized
  mixed methods for the Dirichlet problem}, SIAM J. Numer. Anal., 42 (2004),
  pp.~283--301.

\bibitem{CohenDahmeWelpe12}
{\sc A.~Cohen, W.~Dahmen, and G.~Welper}, {\em Adaptivity and variational
  stabilization for convection-diffusion equations}, ESAIM Math. Model. Numer.
  Anal., 46 (2012), pp.~1247--1273.


\bibitem{Demko15}
{\sc L.~Demkowicz}, {\em Various variational formulations and closed range
  theorem}, ICES Report, \#15-03 (2015).

\bibitem{DemkoHeuer13}
{\sc L.~Demkowicz and N.~Heuer}, {\em Robust {DPG} method for
  convection-dominated diffusion problems}, SIAM J. Numer. Anal., 51 (2013),
  pp.~2514--2537.

\bibitem{DemkoGopal10a}
{\sc L.~Demkowicz and J.~Gopalakrishnan}
{\em A class of
  discontinuous Petrov-Galerkin methods. {Part I:} {T}he transport
  equation}, {Computer Methods in Applied Mechanics and Engineering}, 199
  (2010), pp.~1558--1572.

\bibitem{DemkoGopal11a}
\leavevmode\vrule height 2pt depth -1.6pt width 23pt, {\em Analysis of the
  {DPG} method for the Poisson equation}, SIAM J Numer. Anal., 49 (2011),
  pp.~1788--1809.

\bibitem{DemkoGopal11}
\leavevmode\vrule height 2pt depth -1.6pt width 23pt, {\em A class of
  discontinuous Petrov-Galerkin methods. Part {II}: Optimal test
  functions}, Numerical Methods for Partial Differential Equations, 27 (2011),
  pp.~70--105.

\bibitem{DemkoGopal13a}
\leavevmode\vrule height 2pt depth -1.6pt width 23pt, 
{\em A primal {DPG} method without a
  first-order reformulation}, Computers and Mathematics with Applications, 66
  (2013), pp.~1058--1064.

\bibitem{DemkoGopalMuga11a}
{\sc L.~Demkowicz, J.~Gopalakrishnan, I.~Muga, and J.~Zitelli}, {\em Wavenumber
  explicit analysis for a {DPG} method for the multidimensional {Helmholtz}
  equation}, {Computer Methods in Applied Mechanics and Engineering}, 213/216
  (2012), pp.~126--138.


\bibitem{DemkoGopalNiemi12}
{\sc L.~Demkowicz, J.~Gopalakrishnan, and A.~Niemi}, {\em A class of
  discontinuous {P}etrov-{G}alerkin methods. {P}art {III}: Adaptivity}, Applied
  Numerical Mathematics, 62 (2012), pp.~396--427.

\bibitem{DemkoGopalSchob12}
{\sc L.~Demkowicz, J.~Gopalakrishnan, and J.~Sch{\"{o}}berl}, {\em Polynomial extension
  operators. {Part~III}}, Math. Comp., 81 (2012), pp.~1289--1326.


\bibitem{DemkoOden86a}
{\sc L.~Demkowicz and J.~T. Oden}, {\em An adaptive characteristic
  Petrov-Galerkin finite element method for convection-dominated linear and
  nonlinear parabolic problems in one space variable}, J. Comput. Phys., 67
  (1986), pp.~188--213.

\bibitem{FengWu14}
{\sc X.~Feng and H.~Wu}, {\em An absolutely stable discontinuous {G}alerkin
  method for the indefinite time-harmonic {M}axwell equations with large wave
  number}, SIAM J. Numer. Anal., 52 (2014), pp.~2356--2380.

\bibitem{FuentKeithDemko15}
{\sc F.~Fuentes, B.~Keith, L.~Demkowicz, and S.~Nagaraj}, {\em Orientation
  embedded high order shape functions for the exact sequence elements of all
  shapes}, Computers and Mathematics with Applications,  (2015 (to appear).).

\bibitem{GargPrudhZee14}
{\sc V.~V. Garg, S.~Prudhomme, K.~G. van~der Zee, and G.~F. Carey}, {\em
  Adjoint-consistent formulations of slip models for coupled electroosmotic
  flow systems}, Advanced Modeling and Simulation in Engineering, 2 (2014),
  p.~Article 15.

\bibitem{GopalQiu14}
{\sc J.~Gopalakrishnan and W.~Qiu}, {\em An analysis of the practical {DPG}
  method}, Mathematics of Computation, 83 (2014), pp.~537--552.

\bibitem{HeuerKarkuSayas14}
{\sc N.~Heuer, M.~Karkulik, and F.-J. Sayas}, {\em Note on discontinuous trace
  approximation in the practical {DPG} method}, Comput. Math. Appl., 68 (2014),
  pp.~1562--1568.

\bibitem{KrizeStrou97}
{\sc M.~K{\v{r}}{\'{\i}}{\v{z}}ek and T.~Strouboulis}, {\em How to generate
  local refinements of unstructured tetrahedral meshes satisfying a regularity
  ball condition}, Numer. Methods Partial Differential Equations, 13 (1997),
  pp.~201--214.

\bibitem{Monk03b}
{\sc P.~Monk}, {\em Finite element
  methods for {M}axwell's equations}, Numerical Mathematics and Scientific
  Computation, Oxford University Press, New York, 2003.

\bibitem{Necas67}
{\sc J.~Ne{\v{c}}as}, {\em Les m{\'{e}}thodes directes en th{\'{e}}orie des
  {\'{e}}quations elliptiques}, Masson et {$\text{C}^{\mathrm{ie}}$},
  {\'{E}}diteurs, Paris, 1967.

\bibitem{Nedel80}
{\sc J.-C. N{\'e}d{\'e}lec}, {\em {Mixed Finite Elements in ${\mathbb R}^3$}},
  Numer. Math., 35 (1980), pp.~315--341.


\bibitem{RaviaThoma77}
{\sc P.-A. Raviart and J.~M. Thomas}, {\em A mixed finite element method for
  2nd order elliptic problems}, in Mathematical aspects of finite element
  methods (Proc. Conf., Consiglio Naz. delle Ricerche (C.N.R.), Rome, 1975),
  Springer, Berlin, 1977, pp.~292--315. Lecture Notes in Math., Vol. 606.

\bibitem{RaviaThoma77a}
\leavevmode\vrule height 2pt depth -1.6pt width 23pt,
{\em Primal hybrid finite
  element methods for {$2$}nd order elliptic equations}, Math. Comp., 31
  (1977), pp.~391--413.

\bibitem{RobertsBui-ThanhDemkowicz14}
{\sc N.~Roberts, Tan Bui-Thanh B., and L.~Demkowicz}, 
\newblock {\em The {DPG} method for the {S}tokes problem},
\newblock  Comput.\ Math.\ Appl., 67(4):966--995, 2014.



\end{thebibliography}
\end{document}